\theoremstyle{definition}
\newtheorem{example}{Example}[chapter]
\newtheorem{definition}[example]{Definition}
\newtheorem{remark}[example]{Remark}
\theoremstyle{plain}
\newtheorem{theorem}[example]{Theorem}
\newtheorem{lemma}[example]{Lemma}
\newtheorem{proposition}[example]{Proposition}
\crefname{assumption}{Assumption}{Assumptions}
\crefname{section}{Section}{chapter}
\begin{document}
\singlespacing  


\begin{titlepage}
\begin{center}

\begin{singlespacing}
\vspace*{6\baselineskip}
Stability of first-order methods in tame optimization\\
\vspace{3\baselineskip}
Lexiao Lai\\
\vspace{18\baselineskip}
Submitted in partial fulfillment of the\\
requirements for the degree of\\
Doctor of Philosophy\\
under the Executive Committee\\
of the Graduate School of Arts and Sciences\\
\vspace{3\baselineskip}
COLUMBIA UNIVERSITY\\
\vspace{3\baselineskip}
\the\year
\vfill

\end{singlespacing}

\end{center}
\end{titlepage}


\begin{titlepage}
\begin{singlespacing}
\begin{center}

\vspace*{35\baselineskip}

\textcopyright  \,  \the\year\\
\vspace{\baselineskip}	
Lexiao Lai\\
\vspace{\baselineskip}	
All Rights Reserved
\end{center}
\vfill

\end{singlespacing}
\end{titlepage}

\pagenumbering{gobble}

\begin{titlepage}
\begin{center}

\vspace*{5\baselineskip}
\textbf{\large Abstract}
\vspace*{5mm}

Stability of first-order methods in tame optimization

\vspace*{5mm}

Lexiao Lai
\end{center}

Modern data science applications demand solving large-scale optimization problems. The prevalent approaches are first-order methods, valued for their scalability. These methods are implemented to tackle highly irregular problems where assumptions of convexity and smoothness are untenable. 

Seeking to deepen the understanding of these methods, we study first-order methods with constant step size for minimizing locally Lipschitz tame functions. To do so, we propose notions of discrete Lyapunov stability for optimization methods. Concerning common first-order methods, we provide necessary and sufficient conditions for stability. We also show that certain local minima can be unstable, without additional noise in the method. Our analysis relies on the connection between the iterates of the first-order methods and continuous-time dynamics.
\vspace*{\fill}
\end{titlepage}

\pagenumbering{roman}
\setcounter{page}{1} 
\renewcommand{\cftchapdotsep}{\cftdotsep}  
\renewcommand{\cftchapfont}{\normalfont}  
\renewcommand{\cftchappagefont}{}  
\renewcommand{\cftchappresnum}{Chapter }
\renewcommand{\cftchapaftersnum}{:}
\renewcommand{\cftchapnumwidth}{5em}

\setlength{\cftbeforechapskip}{2ex}
\setlength{\cftbeforesecskip}{2ex}
\setlength{\cftbeforesubsecskip}{2ex}
\setlength{\cftbeforesubsubsecskip}{1ex}

\newcommand{\diag}{\mathop{\rm diag}}
\newcommand{\prox}{{\rm prox}}
\newcommand{\dom}{\mathop{\rm dom}} 
\newcommand{\graph}{\mathop{\rm graph}} 
\newcommand{\epi}{\mathop{\rm epi}} 
\newcommand{\conv}{\mathop{\rm conv}}
\newcommand{\cconv}{\overline{\mathop{\rm conv}}~}
\newcommand{\intr}{\mathop{\rm int}}
\newcommand{\rintr}{\mathop{\rm relint}}
\newcommand{\gra}{{\rm graph}}
\newcommand{\ones}{\mathbf 1}
\newcommand{\aff}{{\rm aff}}
\newcommand{\rank}{\mathop{\rm rank}}

\titleformat{\chapter}[display]
{\normalfont\bfseries\filcenter}{\chaptertitlename\ \thechapter}{0pt}{\large{#1}}

\renewcommand\contentsname{Table of Contents}

\tableofcontents
\setlength{\cftparskip}{\baselineskip}
\listoffigures
\listoftables

\clearpage

\phantomsection
\addcontentsline{toc}{chapter}{Acknowledgments}

\clearpage
\begin{center}

\vspace*{5\baselineskip}
\textbf{\large Acknowledgements}
\end{center}

First and foremost, I would like to express my deepest gratitude to my advisor, C\'edric Josz. This thesis would have not been possible without his unlimited support and guidance. He taught me everything about research, and I have always been amazed by his creative mind. I am also grateful to Kaizheng Wang, Daniel Bienstock, Donald Goldfarb, and Jiawang Nie for serving on my dissertation committee and for their valuable feedback. I would also like to thank Henry Lam and Anthony Man-Cho So for their support on my job search.

I would also like to thank all my fellow Ph.D. students at Columbia IEOR for the incredible moments that we shared together, from attending classes, doing sports, to exploring restaurants in the city. Special thanks to my collaborator Xiaopeng Li, who is always ready to answer any technical questions. I would also like to thank all the IEOR administrative staff who were of great help.

I am grateful to my partner Jingyu Chen for her unconditional support and belief.

Finally, I would like to thank my parents, Weihong Xie and Chunyang Lai, for everything. I will always be indebted to them for their boundless love and help in every stage of my life.

\clearpage





\clearpage
\pagenumbering{arabic}
\setcounter{page}{1} 

\phantomsection
\addcontentsline{toc}{chapter}{Introduction}

\begin{center}
\vspace*{5\baselineskip}
\textbf{\large Introduction}
\end{center}

The last decade has witnessed the fast growth of data science, with successful applications including image recognition \cite{he2016deep,dosovitskiy2020image}, natural language processing \cite{vaswani2017attention,devlin2018bert}, and image/language synthesis \cite{brown2020language,rombach2022high}. Central in this success is the effective training of large-scale models, which requires solving optimization problems with suitable algorithms. The prevailing algorithms are the first-order methods \cite{robbins1951stochastic,kingma2014adam,ruder2016overview}, namely the optimization algorithms that require only first-order derivatives. These algorithms have low per-iteration costs and can be easily distributed among GPUs \cite{dean2012large}.  However, their application typically lacks guarantees, raising concerns regarding robustness and explainability. This issue arises because the objectives are poorly structured, which renders them outside the scope of classical optimization theory.

In this thesis, we seek to provide guarantees for minimizing a general class of locally Lipschitz functions with first-order methods. We focus on the methods with constant step sizes, while most of the results continue to hold with variable step sizes that are nonsummable, except for those in \cref{ch:instability}. In order to analyze the iterates of these methods, we propose notions of discrete Lyapunov stability (\cref{def:discrete_lyapunov,def:global_stability}), akin to the Lyapunov stability in dynamical systems \cite{liapounoff1907probleme} \cite[Equation (5.6)]{sastry2013nonlinear}. Informally, a point is (locally) stable if the iterates generated by first-order methods stay close to it, given close enough initialization and small enough step size; A set is globally stable if the iterates eventually stay close to it for arbitrary initialization and small enough step size. These notions are suitable for the study of first-order methods with non-diminishing step sizes, as (subsequential) convergence to critical points can be unrealistic.

The results presented in this thesis rely on the premise that the objective function is tame (in an o-minimal structure on the real field) \cite{van1998tame,ioffe2009}. This assumption is crucial, as without it, the function may exhibit erratic behavior despite its local Lipschitz continuity  \cite{wang2005subdifferentiability}. Examples of such functions are can be found in \cref{ch:local_stability}.  The class of tame functions is both well suited for the study of first-order methods and wide enough to capture most applications. On one hand, they enjoy desirable properties, which we recall in \cref{sec:o-minimal}. On the other hand, seemingly all locally Lipschitz objective functions of interest nowadays are tame, including all those appearing in the statistical learning textbook \cite{friedman2001elements} by Friedman, Hastie, and Tibshirani.

This thesis is organized as follows. \cref{ch:pre} contains the preliminaries, where we recall the Clarke subdifferential, o-minimal structures, implementation of first-order methods, and review the existing literature. In \cref{ch:approx}, we show that iterates of certain first-order methods are approximated by subgradient trajectories, enabling a unified analysis for these methods in later chapters. In \cref{ch:local_stability}, we study the stability of local minima with respect to those first-order methods. We show that for a point to be stable, it is necessary for it to be a local minimum (\cref{thm:necessary}) and it suffices for it to be a strict local minimum (\cref{thm:sufficient_strict}).  In \cref{ch:global_stability}, we prove that the set of critical points is globally stable (\cref{thm:global_stability}), if the objective is additionally coercive. In \cref{ch:instability}, we study the subgradient method and propose sufficient conditions for local minima to be unstable (\cref{thm:suff_unstable}). These conditions can be verified in data science applications.


\titleformat{\chapter}[display]
{\normalfont\bfseries\filcenter}{}{0pt}{\large\chaptertitlename\ \large\thechapter : \large\bfseries\filcenter{#1}}  
\titlespacing*{\chapter}
  {0pt}{0pt}{30pt}	
  
\titleformat{\section}{\normalfont\bfseries}{\thesection}{1em}{#1}

\titleformat{\subsection}{\normalfont}{\thesubsection}{0em}{\hspace{1em}#1}



\chapter{Preliminaries}\label{ch:pre}
We begin by recalling some standard notations. Let $\mathbb{N}:=\{0,1,\ldots\}$ be the natural numbers. Let $\|\cdot\|$ be the induced norm of an inner product $\langle \cdot, \cdot\rangle$ on $\mathbb{R}^n$. Let $B(a,r)$ and $\mathring{B}(a,r)$ respectively denote the closed ball and the open ball of center $a\in \mathbb{R}^n$ and radius $r>0$. Let $B(S,r):= S + B(0,r)$ where $S \subset \mathbb{R}^n$. Given $x\in \mathbb{R}^n$ and $S \subset \mathbb{R}^n$, let $d(x,S) := \inf \{\|x - y\| : y \in S\}$ and $P_S(x):= \mathrm{argmin} \{y \in S:\|x - y\|\}$. 

\section{Clarke subdifferential}\label{sec:clarke}
 Recall that a function $f:\mathbb{R}^n\rightarrow\mathbb{R}^m$ is locally Lipschitz if for all $a \in \mathbb{R}^n$, there exist $r>0$ and $L>0$ such that 
$\|f(x)-f(y)\| \leqslant L \|x-y\|$ for all $x,y \in B(a,r)$. Since locally Lipschitz functions might not be differentiable everywhere, we consider the following notion of generalized gradients due to Clarke \cite{clarke1975}.

\begin{definition}{\cite{clarke1975} \cite[Chapter 2]{clarke1990}}
    \label{def:Clarke}
    Let $f:\mathbb{R}^n \rightarrow \mathbb{R}$ be a locally Lipschitz function. The Clarke subdifferential is the set-valued mapping $\partial f:\mathbb{R}^n\rightrightarrows\mathbb{R}^n$ defined for all $x \in \mathbb{R}^n$ by $\partial f(x) := \{ s \in \mathbb{R}^n : f^\circ(x,d) \geqslant \langle s , d \rangle, \forall d\in \mathbb{R}^n \}$ where
\begin{equation}\label{eq:gen_dir}
    f^\circ(x,d) := \limsup_{\tiny\begin{array}{c} y\rightarrow x \\
    t \searrow 0
    \end{array}
    } \frac{f(y+td)-f(y)}{t}.
\end{equation}
\end{definition}
  The Clarke subdifferential agrees with the convex subdifferential if $f$ is convex \cite[(1.2) Proposition]{clarke1975}, and agrees with the gradient if $f$ is continuously differentiable \cite[(1.13) Proposition]{clarke1975}. By Fermat's rule \cite[2.3.2 Proposition]{clarke1990}, $x\in \mathbb{R}^n$ is a local minimum of $f$ only if $0\in \partial f(x)$. We say that $x \in \mathbb{R}^n$ is a critical point of $f$ if $0 \in \partial f(x)$, and that $v \in \mathbb{R}$ is a critical value of $f$ if there exists $x\in \mathbb{R}^n$ such that $0 \in \partial f(x)$ and $v = f(x)$.
  
   A set-valued mapping $F:\mathbb{R}^n \rightrightarrows \mathbb{R}^n$ is upper semicontinuous if for all $x\in \mathbb{R}^n$ and $\epsilon>0$, there exists $\delta>0$ such that $F(B(x,\delta)) \subset B(F(x),\epsilon)$ \cite[p. 29]{clarke1990}.  We next recall some topological properties of the Clarke subdifferential.
\begin{proposition}\cite[Chapter 2]{clarke1990}\label{prop:clarke}
	Let $f:\mathbb{R}^n \rightarrow \mathbb{R}$ be a locally Lipschitz function and let $\partial f:\mathbb{R}^n \rightrightarrows \mathbb{R}^n$ be the Clarke subdifferential of $f$. Then it holds that
	\begin{enumerate}[label=(\alph*)]
		\item $\partial f(x)$ is a nonempty, convex, and compact subset of $\mathbb{R}^n$ for any $x\in \mathbb{R}^n$,
		\item $\partial f$ is upper semicontinuous.
	\end{enumerate}
\end{proposition}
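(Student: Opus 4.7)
The plan is to verify the three properties in (a) directly from the definition of $\partial f(x)$ as $\{s\in\mathbb{R}^n : \langle s,d\rangle \leqslant f^\circ(x,d),\ \forall d\in\mathbb{R}^n\}$, and then to deduce (b) from (a) together with the upper-semicontinuous behavior of $f^\circ$ in its first argument.

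For convexity and closedness in (a), observe that $\partial f(x)$ is defined as an intersection, over $d \in \mathbb{R}^n$, of the half-spaces $\{s : \langle s,d\rangle \leqslant f^\circ(x,d)\}$; each is closed and convex, hence so is $\partial f(x)$. For boundedness, I would exploit local Lipschitz continuity: choose $r,L>0$ such that $\|f(y)-f(z)\|\leqslant L\|y-z\|$ on $B(x,r)$, so that for every $d\in\mathbb{R}^n$ the quotient in \eqref{eq:gen_dir} is bounded by $L\|d\|$ whenever $y$ is close enough to $x$ and $t$ is small enough. This gives $f^\circ(x,d)\leqslant L\|d\|$, and applying the defining inequality with $d=s$ yields $\|s\|^2 \leqslant L\|s\|$, hence $\|s\|\leqslant L$. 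Combined with closedness this proves compactness. Nonemptiness is the delicate point: I would verify that $f^\circ(x,\cdot)$ is positively homogeneous (immediate from the definition) and subadditive (this follows because $\limsup$ is subadditive and the two quotients can be combined over a common $y$ and $t$). Thus $f^\circ(x,\cdot)$ is a finite sublinear function on $\mathbb{R}^n$, and the Hahn--Banach theorem produces at least one linear functional dominated by it, i.e.\ an element of $\partial f(x)$.

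For (b), I would first prove the sequential form: whenever $x_k\to x$ and $s_k\in\partial f(x_k)$ with $s_k\to s$, then $s\in\partial f(x)$. Fix $d\in\mathbb{R}^n$. By definition of $\partial f(x_k)$ we have $\langle s_k,d\rangle \leqslant f^\circ(x_k,d)$. Since $f^\circ(\cdot,d)$ is an upper limit of the difference quotient as $y\to\cdot$ and $t\searrow 0$, it is itself upper semicontinuous at $x$: any sequence approaching $x$ can be absorbed into the $y\to x$ outer limit, yielding $\limsup_{k\to\infty} f^\circ(x_k,d) \leqslant f^\circ(x,d)$. Passing to the limit gives $\langle s,d\rangle \leqslant f^\circ(x,d)$ for arbitrary $d$, i.e.\ $s\in\partial f(x)$. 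Then I would upgrade this to the definition in the text by a standard compactness argument: if upper semicontinuity failed at $x$, there would exist $\epsilon>0$, $x_k\to x$, and $s_k\in\partial f(x_k)$ with $d(s_k,\partial f(x))>\epsilon$; local boundedness of $\partial f$ (from the uniform bound $L$ on a neighborhood, proved in (a)) extracts a convergent subsequence $s_k\to s$, which by the sequential form lies in $\partial f(x)$, contradicting $d(s_k,\partial f(x))>\epsilon$.

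The main obstacle is the nonemptiness of $\partial f(x)$, where one must combine the sublinearity of $f^\circ(x,\cdot)$ with Hahn--Banach; the Lipschitz bound ensures finiteness of $f^\circ(x,\cdot)$ and hence applicability of the theorem. The other items are essentially book-keeping once the right estimates on $f^\circ$ are in hand.
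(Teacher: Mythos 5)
Your proof is correct and reproduces the standard argument from Clarke's monograph \cite[Chapter 2]{clarke1990}, which is precisely what the paper cites in lieu of a proof: convexity and closedness as an intersection of half-spaces, boundedness and finiteness of $f^\circ(x,\cdot)$ from the local Lipschitz constant, nonemptiness via sublinearity of $f^\circ(x,\cdot)$ and Hahn--Banach, and upper semicontinuity deduced from the upper semicontinuity of $f^\circ(\cdot,d)$ together with local boundedness of the subdifferential. Since the paper simply defers to Clarke, your reconstruction is exactly the intended argument; one minor point worth tightening is the subadditivity of $f^\circ(x,\cdot)$, where the split $f(y+t(d_1+d_2))-f(y) = \bigl(f((y+td_2)+td_1)-f(y+td_2)\bigr) + \bigl(f(y+td_2)-f(y)\bigr)$ should be written out so that the substitution $y' := y+td_2 \to x$ in the first quotient is explicit.
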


In the remainder of this section, we define subgradient trajectories governed by the Clarke subdifferential. They play an important role for the analysis in this thesis (see \cref{ch:approx}). In order to do so, we recall the definition of absolutely continuous functions.
\begin{definition}
    \label{def:absolutely_continuous} 
    \cite[Definition 1 p. 12]{aubin1984differential}
    Given $a,b\in \mathbb{R}$ such that $a<b$, a function $x:[a,b] \rightarrow \mathbb{R}^n$ is absolutely continuous if for all $\epsilon>0$, there exists $\delta>0$ such that, for any finite collection of disjoint intervals $[a_1,b_1],\hdots,[a_m,b_m]$ of $[a,b]$ such that $\sum_{i=1}^m b_i-a_i \leqslant \delta$, we have $\sum_{i=1}^m \|x(b_i) - x(a_i)\| \leqslant \epsilon$.
\end{definition}
By virtue of \cite[Theorem 20.8]{nielsen1997introduction}, a function $x:[a,b]\rightarrow\mathbb{R}^n$ is absolutely continuous if and only if it is differentiable almost everywhere on $(a,b)$, its derivative $x'(\cdot)$ is Lebesgue integrable, and $\forall t\in [a,b], x(t) - x(a) = \int_a^t x'(t)dt$. Let $I$ be an interval in $\mathbb{R}$, we say that $x:I\rightarrow \mathbb{R}^n$ is absolutely continuous if it is absolutely continuous on any compact interval of $I$. We next define subgradient trajectories of locally Lipschitz functions.
\begin{definition}\label{def:subg_traj}
	Let $f:\mathbb{R}^n \rightarrow \mathbb{R}$ be a locally Lipschitz function and $I$ be an interval of $[0,\infty)$. We say that $x:I\rightarrow \mathbb{R}^n$ is a subgradient trajectory of $f$ if $x(\cdot)$ is absolutely continuous and $x'(t)\in -\partial f(x(t))$ for almost every $t\in I$.
\end{definition}
We defer a chain rule for subgradient trajectories of tame functions to the next section. 

\section{O-minimal structures}\label{sec:o-minimal}

O-minimal structures (short for order-minimal) were originally considered by van den Dries, Pillay and Steinhorn \cite{van1984remarks,pillay1986definable}. They are founded on the observation that many properties of semialgebraic sets can be deduced from a few simple axioms \cite{van1998tame}. Recall that a subset $A$ of $\mathbb{R}^n$ is semialgebraic \cite{bochnak2013real} if it is a finite union of basic semialgebraic sets, which are of the form $\{ x \in \mathbb{R}^n : p_i(x) = 0, ~ i = 1,\hdots,k; ~  p_i(x) > 0, ~ i = k+1,\hdots,m \}$ where $p_1,\hdots,p_m \in \mathbb{R}[X_1,\hdots,X_n]$ (i.e., polynomials with real coefficients). 

\begin{definition}\cite[Definition p. 503-506]{van1996geometric}
\label{def:o-minimal}
An o-minimal structure on the real field is a sequence $S = (S_k)_{k \in \mathbb{N}}$ such that for all $k \in \mathbb{N}$:
\begin{enumerate}
    \item $S_k$ is a boolean algebra of subsets of $\mathbb{R}^k$, with $\mathbb{R}^k \in S_k$;
    \item $S_k$ contains the diagonal $\{(x_1,\hdots,x_k) \in \mathbb{R}^k : x_i = x_j\}$ for $1\leqslant i<j \leqslant k$;
\item If $A\in S_k$, then $A\times \mathbb{R}$ and $\mathbb{R}\times A$ belong to $S_{k+1}$;
    \item If $A \in S_{k+1}$ and $\pi:\mathbb{R}^{k+1}\rightarrow\mathbb{R}^k$ is the projection onto the first $k$ coordinates, then $\pi(A) \in S_k$;
    \item $S_3$ contains the graphs of addition and multiplication;
    \item $S_1$ consists exactly of the finite unions of open intervals and singletons. 
\end{enumerate}
\end{definition}

Note that $S_1$ are the semialgebraic subsets of $\mathbb{R}$ and by \cite[2.5 Examples (3)]{van1996geometric}, $S_k$ contains the semialgebraic subsets of $\mathbb{R}^k$. A subset $A$ of $\mathbb{R}^n$ is definable in an o-minimal structure $(S_k)_{k\in\mathbb{N}}$ if $A \in S_n$. Let $A \subset \mathbb{R}^n$ and $B\subset \mathbb{R}^m$, a function $f:A\rightarrow B$ is definable in an o-minimal structure if its graph, that is to say $\{(x,t) \in A \times B : f(x)=t \}$, is definable in that structure. A set $C \subset \mathbb{R}^n$ is tame \cite{ioffe2009} in an o-minimal structure $(S_k)_{k\in \mathbb{N}}$ if
    $$
    \forall x\in \mathbb{R}^{n}, ~ \forall r>0, ~~~ C \cap B(x,r) \in S_{n},
    $$
    and a function $f:A\rightarrow B$ is tame if its graph is tame. Throughout this thesis, we fix an arbitrary o-minimal structure on the real field, and say that the sets or functions are tame (resp. definable) if they are tame (resp. definable) in this structure.
    
 Tame functions enjoy nice properties that enable the analysis in this thesis. In the remainder of this section, we recall several such results. We first state the \L{}ojasiewicz inequality for continuous tame functions, generalized from the inequality for semialgebraic functions \cite[\S 2]{lojasiewicz1958} (see also \cite[\S 17]{lojasiewicz1959}, \cite[(2.1)]{hormander1958division}).
 \begin{theorem}\label{thm:l}
 	\cite[Theorem 0]{kurdyka1998gradients} Let $A$ be a compact subset of $\mathbb{R}^n$. Let $f,g:A \rightarrow \mathbb{R}$ be continuous tame functions. If $f^{-1}(0) \subset g^{-1}(0)$, then there exists a strictly increasing definable function $\sigma:[0,\infty) \rightarrow [0,\infty)$ that is continuously differentiable on $(0,\infty)$ with $\sigma(0) = 0$ such that for any $x\in A$, we have $|f(x)| \geqslant \sigma(|g(x)|)$.
 \end{theorem}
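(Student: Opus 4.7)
My plan is to follow the classical route for \L{}ojasiewicz-type inequalities, adapted to the o-minimal setting. The key auxiliary object is the function
\[
\phi(t) := \min\{|f(x)| : x \in A,~ |g(x)| \geqslant t\}, \qquad t \in [0,M],
\]
where $M := \max_{x\in A}|g(x)|$. The minimum is attained because $A$ is compact, $f$ and $g$ are continuous, and the feasible set is closed; hence $\phi$ is well-defined on $[0,M]$.

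I would then verify three structural properties of $\phi$. First, $\phi$ is non-decreasing by construction, since the feasible set shrinks as $t$ grows. Second, $\phi$ is definable: its graph is the image of a definable set built from $A$, $f$, and $g$ via a first-order formula under a coordinate projection, and definability is closed under projection in the sense of \cref{def:o-minimal}. Third, $\phi(t) > 0$ for every $t > 0$: any minimizer $x^\ast$ satisfies $|g(x^\ast)| \geqslant t > 0$, hence $x^\ast \notin g^{-1}(0) \supset f^{-1}(0)$, so $|f(x^\ast)| > 0$.

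Next, I would invoke the monotonicity theorem for definable functions of one real variable to obtain a finite partition $0 = \tau_0 < \tau_1 < \cdots < \tau_N = M$ such that $\phi$ is continuous on $[0,M]$, continuously differentiable on each open subinterval $(\tau_{i-1}, \tau_i)$, and on each such subinterval is either constant or strictly monotone. Combined with the global monotonicity of $\phi$ and its positivity on $(0,M]$, each piece is either a strictly positive constant or strictly increasing. From this piecewise description, I would build $\sigma$ on $[0,M]$ as a strictly increasing, definable, $C^1$ minorant of $\phi$ vanishing at $0$---for instance by choosing small-slope affine minorants on the nontrivial pieces of the partition so that the resulting piecewise-affine function is strictly increasing and dominated by $\phi$, and then smoothing the finitely many breakpoints via definable polynomial interpolation in narrow windows---and finally extend $\sigma$ to $[M,\infty)$ by any strictly increasing definable $C^1$ continuation with matched slope at $M$.

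The main technical obstacle is precisely this last step: one must produce a single function $\sigma$ that is simultaneously definable, $C^1$ on all of $(0,\infty)$, strictly increasing on $[0,\infty)$, zero at the origin, and pointwise dominated by $\phi$ on $[0,M]$. The monotonicity theorem only supplies piecewise $C^1$ regularity for $\phi$, so the construction must carefully blend across the finitely many breakpoints $\tau_i$ without breaking definability. Working within a fixed o-minimal structure is essential here, since definable polynomial interpolation in a neighborhood of each $\tau_i$ yields the required $C^1$ junctions and keeps the construction inside the class of definable functions.
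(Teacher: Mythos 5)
The paper does not prove this theorem; it cites it directly from Kurdyka \cite[Theorem 0]{kurdyka1998gradients}, so there is no in-text proof to compare against. Your overall strategy --- introduce the modulus $\phi(t)=\min\{|f(x)|:x\in A,\ |g(x)|\geqslant t\}$, check it is non-decreasing, definable, and positive for $t>0$, then manufacture a strictly increasing $C^1$ definable minorant via the monotonicity theorem --- is exactly the standard route and almost certainly close to the cited argument, so there is nothing to object to at the level of the plan.

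Two points do need attention. First, a small one: the hypothesis gives you \emph{tame}, not \emph{definable}, data, and your definability-by-projection argument for $\phi$ implicitly requires $A$, $\graph f$, and $\graph g$ to be definable sets rather than merely tame ones. You should say explicitly that compactness of $A$ together with continuity of $f,g$ makes $A$, $\graph f$, and $\graph g$ bounded, hence definable (a tame set intersected with a large enough ball is definable), after which the first-order-formula-plus-projection argument for $\phi$ goes through. Second, and this is a genuine gap, the construction you sketch for $\sigma$ --- small-slope affine minorants on the pieces, with polynomial smoothing at the breakpoints --- breaks down precisely in the hard case $\lim_{t\to 0^+}\phi(t)=0$ with $\phi$ flat at the origin. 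Take $\phi(t)=t^2$ on the first subinterval: for any slope $a>0$, one has $at>t^2$ for all $t\in(0,a)$, so \emph{no} affine function vanishing at $0$ with positive slope is a minorant of $\phi$ near $0$, however small you take the slope. The affine-then-smooth construction thus fails exactly where the inequality has content. The standard fix is to lean harder on the monotonicity theorem near $0$: there exists $b>0$ such that $\phi$ is $C^1$ and either constant or strictly increasing on $(0,b)$. If $\phi$ is constant $=c>0$ there (or more generally if $\phi(0^+)>0$), the linear $\sigma(t)=\tfrac{\phi(0^+)}{2M}t$ already works globally. Otherwise set $\sigma:=\phi/2$ on $(0,b/2]$ --- this is definable, $C^1$, strictly increasing, vanishes at $0^+$, and is dominated by $\phi$ --- and then continue past $b/2$ with a strictly increasing definable $C^1$ function that stays below the constant $\phi(b/2)\leqslant\phi(t)$ and matches value and first derivative at $b/2$; a rational function of the form $\phi(b/2)-\tfrac{\phi(b/2)/2}{1+\lambda(t-b/2)}$ with $\lambda=\phi'(b/2)/\phi(b/2)$ does the job and remains definable in any o-minimal structure (you cannot use exponential-type bump constructions here). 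This replaces the affine-minorant step and closes the gap while keeping everything definable and $C^1$ on $(0,\infty)$.
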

 
 Recall that the Kurdyka-\L{}ojasiewicz inequality \cite[Theorem 1]{kurdyka1998gradients} lower bounds the gradient norm of continuously differentiable definable functions, after precomposing a desingularizing function. We next state a version of Kurdyka-\L{}ojasiewicz inequality for locally Lipschitz tame functions, which is a consequence of the projection formula of Clarke subdifferentials of definable functions \cite[Corollary 9]{bolte2007clarke}. 
 \begin{theorem}\label{thm:kl}
 	\cite[Theorem 14]{bolte2007clarke}  Let $f:\mathbb{R}^n\rightarrow \mathbb{R}$ be a locally Lipschitz tame function. Then for any $r>0$, there exist $\rho>0$ and a strictly increasing definable function $\psi:[0,\rho)\rightarrow[0,\infty)$ that is continuously differentiable on $(0,\rho)$ with $ \psi(0) = 0$ such that $\psi$ is concave and
 	\begin{equation*}
 		d(0,\partial \left(\psi \circ |f|\right)(x)) \geqslant 1
 	\end{equation*}
 	for any $x\in B(0,r)$ such that $0<|f(x)|<\rho$.
 \end{theorem}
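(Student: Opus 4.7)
My plan is to deduce this nonsmooth Kurdyka-\L{}ojasiewicz inequality from the smooth counterpart due to Kurdyka by a stratification argument, combined with the already-cited projection formula for Clarke subdifferentials of definable functions. Three ingredients are needed: (i) a Whitney stratification of $B(0,r+1)$ into finitely many definable $C^1$ manifolds $M_1,\ldots,M_p$ on which $f$ restricts to a $C^1$ function, which exists because $f$ is tame; (ii) the classical smooth Kurdyka-\L{}ojasiewicz theorem applied to each restriction $f|_{M_i}$; and (iii) the projection formula, which at any $x\in M_i$ gives $d(0,\partial f(x))\geqslant \|\nabla (f|_{M_i})(x)\|$, since $\nabla(f|_{M_i})(x)$ is the orthogonal projection of every element of $\partial f(x)$ onto the tangent space $T_x M_i$.

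Concretely, I would first apply the smooth Kurdyka-\L{}ojasiewicz theorem on each stratum to obtain a strictly increasing definable function $\psi_i:[0,\rho_i)\to[0,\infty)$ that is $C^1$ and concave on $(0,\rho_i)$ with $\psi_i(0)=0$, and satisfies $\psi_i'(|f(x)|)\,\|\nabla (f|_{M_i})(x)\|\geqslant 1$ whenever $x\in M_i\cap B(0,r)$ and $0<|f(x)|<\rho_i$. Setting $\rho:=\min_i\rho_i>0$, I would then construct a single definable $\psi$ whose derivative dominates all the $\psi_i'$ on $(0,\rho)$ while remaining positive and nonincreasing---for instance by integrating a nonincreasing majorant of $t\mapsto\max_i\psi_i'(t)$, which is a finite maximum of definable functions and is therefore itself definable. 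At any $x\in B(0,r)$ with $0<|f(x)|<\rho$, the point lies in exactly one stratum $M_i$, and combining the projection formula with the stratum-wise inequality gives $\psi'(|f(x)|)\,d(0,\partial f(x))\geqslant \psi_i'(|f(x)|)\,\|\nabla (f|_{M_i})(x)\|\geqslant 1$.

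To finish, I would apply the Clarke chain rule. Since $f$ is continuous and $f(x)\neq 0$, in a neighborhood of $x$ we have $|f|=\mathrm{sgn}(f(x))\cdot f$, so $\partial|f|(x)=\mathrm{sgn}(f(x))\,\partial f(x)$. Since $\psi$ is $C^1$ at $|f(x)|>0$ with $\psi'(|f(x)|)>0$, the composition formula for a $C^1$ function with a locally Lipschitz function yields $\partial(\psi\circ|f|)(x)=\psi'(|f(x)|)\,\partial|f|(x)$, whence $d(0,\partial(\psi\circ|f|)(x))=\psi'(|f(x)|)\,d(0,\partial f(x))\geqslant 1$. The main obstacle I anticipate is engineering a single $\psi$ that is simultaneously strictly increasing, definable, concave, $C^1$ on $(0,\rho)$, and dominates all the stratum-level desingularizers; once such a uniform $\psi$ is in hand, the projection formula together with the chain rule immediately delivers the claimed bound.
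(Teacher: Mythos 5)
You should note that the paper does not actually prove \cref{thm:kl}; the statement is quoted from \cite[Theorem 14]{bolte2007clarke}, and the surrounding text only records that the result is ``a consequence of the projection formula'' \cite[Corollary 9]{bolte2007clarke} and that the concavity of $\psi$ follows from the monotonicity theorem. Your reconstruction is faithful to the spirit of that cited proof: you correctly identify the projection formula and a definable $C^1$ stratification as the two essential ingredients, the patching of finitely many desingularizers via $\psi':=\max_i\psi_i'$ works because by cell decomposition the maximum is piecewise equal to one of the $\psi_i'$ and hence its antiderivative is a definable piecewise combination of the $\psi_i$, and the final chain-rule step $\partial(\psi\circ|f|)(x)=\psi'(|f(x)|)\,\partial|f|(x)$ is the standard Clarke chain rule with equality for a $C^1$ outer function.

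The one genuine gap is in step (ii), where you say you would ``apply the classical smooth Kurdyka--\L{}ojasiewicz theorem to each restriction $f|_{M_i}$.'' Kurdyka's Theorem 1 in \cite{kurdyka1998gradients} is stated for a $C^1$ definable function on a bounded definable \emph{open subset of $\mathbb{R}^n$}; your strata $M_i$ are definable $C^1$ \emph{submanifolds}, and what you need to bound below is the Riemannian gradient $\|\nabla(f|_{M_i})\|$ with respect to the metric induced from the ambient Euclidean space, which is not literally what the open-set statement provides. The transfer is doable but must be made explicit: for instance, take a definable $C^1$ tubular retraction $\pi$ from a bounded definable open neighborhood $U_i\supset M_i$ onto $M_i$, set $\tilde g_i:=(f|_{M_i})\circ\pi$, apply the Euclidean KL theorem to $\tilde g_i$ on $U_i$, and observe that for $x\in M_i$ one has $\nabla\tilde g_i(x)=\nabla(f|_{M_i})(x)$ because $D\pi(x)$ is the orthogonal projection onto $T_xM_i$. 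Without such an intermediate lemma the stratum-wise application is unjustified. A smaller issue: you attribute concavity of the $\psi_i$ to the smooth KL theorem, but Kurdyka's statement does not assert it; as the paper points out, concavity is obtained separately from the monotonicity theorem \cite[(1.2) p.~43]{van1998tame}, by shrinking $\rho_i$ so that each definable $\psi_i'$ is monotone near the origin and, when nondecreasing, replacing it by its finite left limit. With those two points addressed, your argument goes through.
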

The fact that $\psi$ can be taken to be concave is due to the monotonicity theorem \cite[(1.2) p. 43]{van1998tame} and was observed in \cite{attouch2010proximal}. A uniform version of the inequality was proposed recently in \cite{josz2023global} for proving the global convergence of the gradient method. \cref{thm:kl} implies that a locally Lipschitz tame function has finitely many critical values over any bounded set. The definable Morse-Sard theorem shows that in fact definable functions admit finitely many critical values over their domains \cite[Corollary 9]{bolte2007clarke}.

Finally, we recall a chain rule for the subgradient trajectories of locally Lipschitz tame functions. The same property was studied for semialgebraic functions in \cite{drusvyatskiy2015curves}.

\begin{proposition}\label{prop:chain} \cite[Lemma 5.2]{davis2020stochastic} Let $f:\mathbb{R}^n\rightarrow \mathbb{R}$ be a locally Lipschitz tame function and $I$ be an interval of $[0,\infty)$. If $x:I \rightarrow \mathbb{R}$ is a subgradient trajectory of $f$, then $f\circ x$ is differentiable almost everywhere on $I$ with
        \begin{equation*}
            \left(f\circ x\right)'(t) = -\|x'(t)\|^2 \quad\text{and}\quad \|x'(t)\| = d\left(0,\partial f(x(t))\right). 
        \end{equation*}
	\end{proposition}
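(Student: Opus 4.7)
My plan is to reduce the statement to two ingredients: an absolute continuity argument yielding differentiability of $f\circ x$ almost everywhere, and a chain-rule identity $(f\circ x)'(t)=\langle s,x'(t)\rangle$ valid for every $s\in\partial f(x(t))$ at almost every $t\in I$. Combining these with the relation $x'(t)\in -\partial f(x(t))$ will produce both equalities of the proposition at once.

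First I would work on an arbitrary compact subinterval $[a,b]\subset I$. Because $x$ is absolutely continuous and $f$ is locally Lipschitz on the compact image $x([a,b])$, the composition $f\circ x$ is absolutely continuous on $[a,b]$. Consequently $(f\circ x)'(t)$ exists for almost every $t\in I$, and the set of ``good'' $t$ (those where $x'(t)$ exists, lies in $-\partial f(x(t))$, and $(f\circ x)'(t)$ exists) has full measure in $I$.

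The heart of the argument is the chain-rule identity, and the hard part will be localizing the trajectory on a Whitney stratification. Since $f$ is tame, on a bounded neighborhood of $x([a,b])$ there exists a finite Whitney $C^1$ stratification $\{M_i\}$ compatible with the graph of $f$, on each piece of which $f|_{M_i}$ is $C^1$. By the projection formula for Clarke subdifferentials of definable functions \cite[Corollary 9]{bolte2007clarke}, for every $y\in M_i$ and every $s\in\partial f(y)$ the orthogonal projection of $s$ onto the tangent space $T_yM_i$ equals $\nabla(f|_{M_i})(y)$. Let $T_i:=\{t\in[a,b]:x(t)\in M_i\}$; the $T_i$ partition $[a,b]$, and at almost every $t$ lying in the relative interior of some $T_i$ the trajectory $x$ remains on the single stratum $M_i$ in a neighborhood of $t$. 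At such $t$ the velocity $x'(t)$ is tangent to $M_i$, and the classical chain rule on $M_i$ gives
$$
(f\circ x)'(t) \;=\; \langle \nabla(f|_{M_i})(x(t)),x'(t)\rangle \;=\; \langle s,x'(t)\rangle
$$
for every $s\in\partial f(x(t))$. The main obstacle is the measure-zero claim on the transition set, i.e. showing that the set of $t$ not lying in the relative interior of any $T_i$ has Lebesgue measure zero; I would handle this as in \cite{drusvyatskiy2015curves} by a one-dimensional tame selection argument exploiting the fact that the arc $x(\cdot)$ and the stratification are both tame, which is precisely the strategy also used in \cite{davis2020stochastic}.

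Granted the chain-rule identity, substituting $x'(t)=-s(t)$ for some $s(t)\in\partial f(x(t))$ yields
$$
(f\circ x)'(t) \;=\; \langle s(t),x'(t)\rangle \;=\; -\|s(t)\|^2 \;=\; -\|x'(t)\|^2,
$$
which is the first conclusion. For the second, the identity $\langle s,x'(t)\rangle=-\|x'(t)\|^2$ valid for \emph{every} $s\in\partial f(x(t))$ combined with the Cauchy--Schwarz inequality forces $\|s\|\geqslant \|x'(t)\|$ for every such $s$; since $-x'(t)$ itself belongs to $\partial f(x(t))$ with norm exactly $\|x'(t)\|$, it is the minimum-norm element of the convex set $\partial f(x(t))$, whence $\|x'(t)\|=d(0,\partial f(x(t)))$.
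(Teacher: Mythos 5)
The overall architecture of your argument (reduce to a chain-rule identity via a Whitney stratification and the projection formula, then use Cauchy--Schwarz to identify $-x'(t)$ as the minimum-norm subgradient) is the right one and matches the strategy of \cite{davis2020stochastic} and \cite{drusvyatskiy2015curves}, which the paper cites rather than reproving. The final algebraic step is correct: once $(f\circ x)'(t)=\langle s,x'(t)\rangle$ holds for \emph{every} $s\in\partial f(x(t))$, substituting $s=-x'(t)$ gives $(f\circ x)'(t)=-\|x'(t)\|^2$, and then Cauchy--Schwarz forces $\|s\|\geqslant\|x'(t)\|$ for all $s$, so $-x'(t)$ realizes the distance $d(0,\partial f(x(t)))$.

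However, there is a genuine gap at what you yourself identify as the ``main obstacle.'' You propose to show that the set of $t$ not lying in the relative interior of some $T_i:=x^{-1}(M_i)$ has measure zero via a ``one-dimensional tame selection argument exploiting the fact that the arc $x(\cdot)$ and the stratification are both tame.'' The arc $x(\cdot)$ is \emph{not} tame in general. A subgradient trajectory is only assumed to be absolutely continuous and to satisfy the differential inclusion $x'\in-\partial f(x)$ a.e.; solutions of such inclusions need not be definable in any o-minimal structure (already the gradient flow $t\mapsto e^{-t}$ of the semialgebraic function $x^2/2$ is not semialgebraic). Consequently the sets $T_i$ are merely Borel, not finite unions of intervals, and the set of non-interior points of the $T_i$ can have positive measure (think of interleaved fat Cantor sets), so the claim as you phrase it is false and the proposed method does not apply.

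The correct replacement, which is what \cite{davis2020stochastic} and \cite{drusvyatskiy2015curves} actually use, is purely measure-theoretic: by the Lebesgue density theorem, almost every $t\in[a,b]$ is a point of density one of some $T_i$. At such a $t$ where moreover $x'(t)$ and $(f\circ x)'(t)$ exist, one can choose $\tau_k\to t$ with $\tau_k\in T_i$; since $x(\tau_k),x(t)\in M_i$ and $M_i$ is $C^1$, the difference quotients $(x(\tau_k)-x(t))/(\tau_k-t)\to x'(t)$ force $x'(t)\in T_{x(t)}M_i$, and evaluating $(f\circ x)'(t)$ along the same sequence gives $(f\circ x)'(t)=\langle\nabla(f|_{M_i})(x(t)),x'(t)\rangle$. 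This requires neither the trajectory to stay on $M_i$ near $t$ nor any tameness of $x(\cdot)$. With that fix, the rest of your argument goes through.
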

	
An immediate consequence of the chain rule is that the function value decreases along any subgradient trajectory, until a critical point is reached.

\section{First-order methods}\label{sec:first-order}
In this thesis, we consider unconstrained optimization problems with possible composite structures, namely

\begin{equation}
\label{eq:obj}
	\inf\limits_{x\in \mathbb{R}^n} f(x) := \frac{1}{N}\sum\limits_{i= 1}^N f_i(x)
\end{equation}
where $f_i:\mathbb{R}^n \rightarrow \mathbb{R}$ is locally Lipschitz for $i = 1,2, \ldots, N$. Such problems are central in machine learning applications such as empirical risk minimization \cite{bottou2018optimization}, low-rank matrix recovery \cite{li2019,ma2022global,zhang2022accelerating}, and the training of deep neural networks \cite{lecun2015deep}.  As mentioned in the introduction, we study first-order methods with constant step sizes for solving \eqref{eq:obj}. While \cref{ch:instability} focuses on the subgradient method, other parts of this thesis consider a general class of first-order methods that satisfy \cref{def:approx_flow_new}. We outline several examples of such methods here. 

 The subgradient method (\cref{alg:sg}) is one of the most classical first-order methods, which is a nonsmooth adaptation of the steepest descent due to Cauchy \cite{cauchy}. The subgradient method with momentum (\cref{alg:mag}) is a generalization of the framework proposed in the work of Kovachki and Stuart \cite[(7)]{kovachki2021continuous} from differentiable functions to locally Lipschitz functions. \cref{alg:mag} reduces to the heavy ball method \cite{polyak1964some} when $\gamma = 0$ and to Nesterov's momentum method \cite[equation (2.2.22)]{nesterov2018introductory} when $\beta = \gamma$. It also includes the vanilla subgradient method as a special case when $\beta = \gamma= 0$. The random reshuffling with momentum (\cref{alg:rrm}) is an extension of \cref{alg:mag} which exploits the composite nature of the objective function \eqref{eq:obj}. Its update is the same as \cref{alg:mag} except that each step concerns only one component $f_i$, which is chosen at a random order at every iteration (epoch). This is exactly how stochastic subgradient method with momentum is implemented in practice (see for e.g., documentations from TensorFlow\footnote{\url{https://www.tensorflow.org/api\_docs/python/tf/keras/optimizers/SGD}}, PyTorch\footnote{\url{https://pytorch.org/docs/stable/generated/torch.optim.SGD.html}} and scikit-learn\footnote{\url{https://scikit-learn.org/stable/modules/sgd.html}}). Last, Algorithm \ref{alg:rcd} is the random-permutations cyclic coordinate descent method, where $\nabla_i f(x):= [\nabla f(x)]_i e_i$, $[\nabla f(x)]_i$ is the $i$\textsuperscript{th} entry of $\nabla f(x)$, and $e_i$ is the $i$\textsuperscript{th} vector in the canonical basis of $\mathbb{R}^n$. Similar to \cref{alg:rrm}, Algorithm \ref{alg:rcd} chooses a permutation of all the coordinates at every iteration and cycles through them.
\begin{algorithm}[ht]
\caption{Subgradient method}\label{alg:sg}
\begin{algorithmic}
\STATE{\textbf{choose} step size $\alpha>0$ and $x_0 \in \mathbb{R}^n$}
\FOR{$k = 0,1, \ldots$}
    \STATE{ $x_{k+1}  \in x_k  - \alpha \partial f (x_k)$ } 
\ENDFOR
\end{algorithmic}
\end{algorithm}

\begin{algorithm}[ht]
\caption{Subgradient method with momentum}\label{alg:mag}
\begin{algorithmic}
\STATE{\textbf{choose} step size $\alpha>0$, momentum parameters $\beta \in (-1,1)$, $\gamma \in \mathbb{R}$, constant $\delta>0$, $x_{-1},x_0 \in \mathbb{R}^n$ with $\|x_{-1}-x_0\|\leqslant \delta \alpha$}
\FOR{$k = 0,1, \ldots$}
    \STATE{ $y_k = x_k + \gamma (x_k - x_{k-1})$}
    \STATE{ $x_{k+1}  \in x_k + \beta (x_k - x_{k-1})  - \alpha \partial f (y_k)$ } 
\ENDFOR
\end{algorithmic}
\end{algorithm}

\begin{algorithm}[!ht]
\caption{Random reshuffling with momentum}\label{alg:rrm}
\begin{algorithmic}
\STATE{\textbf{choose} step size $\alpha>0$, momentum parameters $\beta \in (-1,1)$, $\gamma \in \mathbb{R}$, constant $\delta>0$, $x_{-1,N-1}, x_0 \in \mathbb{R}^n$ with $\|x_{-1,N-1}- x_0\| \leqslant \delta \alpha$}
\FOR{$k = 0,1, \ldots$}
    \STATE{ $x_{k,0} = x_k$}
    \STATE{$x_{k,-1} = x_{k-1,N-1}$}
    \STATE{choose a permutation $\sigma^k$ of $\{1,2,\ldots, N\}$}
    \FOR{$i = 1,2, \ldots, N$}
        \STATE{ $y_{k,i} = x_{k,i-1} + \gamma (x_{k,i-1} - x_{k,i-2})$}
        \STATE{ $x_{k,i} \in x_{k,i-1} + \beta (x_{k,i-1} - x_{k,i-2})  - \alpha \partial f_{\sigma^k_i} (y_{k,i})$}
    \ENDFOR
    \STATE{ $x_{k+1} = x_{k,N}$}
\ENDFOR
\end{algorithmic}
\end{algorithm}

\begin{algorithm}[ht]
\caption{Random-permutations cyclic coordinate descent method}\label{alg:rcd}
\begin{algorithmic}
\STATE{\textbf{choose}  $x_0 \in \mathbb{R}^n$, step size $\alpha>0$}
\FOR{$k = 0,1, \ldots$}
    \STATE{choose a permutation $\sigma^k$ of $\{1,2,\ldots, n\}$}
    \STATE{ $x_{k,0} = x_k$}
    \FOR{$i = 1,2, \ldots, n$}
        \STATE{ $x_{k,i} = x_{k,i-1} - \alpha \nabla_{\sigma^k_i} f(x_{k,i-1})$}
    \ENDFOR
    \STATE{ $x_{k+1} = x_{k,n}$}
\ENDFOR
\end{algorithmic}
\end{algorithm}

\section{Literature review}\label{sec:literature}
We next review the existing analysis of \cref{alg:sg,alg:mag,alg:rrm,alg:rcd}. The subgradient method was proposed by Shor in 1961 for minimizing piecewise linear convex functions that arise when taking the dual of network transportation problems \cite{shor1962application,shor1964structure}. If $f$ is convex and the Euclidean norm of its subgradients is bounded above by a constant $c$, then $\liminf f(x_k) -\inf f \leqslant c^2\alpha/2$ provided that the infimum is reached \cite[Proposition 3.2.3]{bertsekas2015convex}. In order to get within $\epsilon$ accuracy of that bound, $\lfloor d(x_0,X)^2/(\alpha \epsilon) \rfloor$ iterations suffice where $\lfloor \cdot \rfloor$ denotes the floor of a real number and $d(x_0,X)$ is the distance between the initial iterate $x_0$ and the set of minimizers $X \subset \mathbb{R}^n$ \cite[Proposition 3.2.4]{bertsekas2015convex}. If the objective function grows quadratically (at least as fast as $t\in \mathbb{R} \mapsto\beta t^2$ for some $\beta>0$) around the set of minimizers, then the iterates asymptotically get within $c\sqrt{\alpha}/\sqrt{2\beta}$ distance to the set of minimizers if $\alpha \in (0,1/(2\beta)]$ \cite[Proposition 3.2.5]{bertsekas2015convex}. If we relax the boundedness assumption on the subgradients to $\|s\| \leqslant c \sqrt{1+d(x,X)^2}$ for all $(x,s)$ in the graph of $\partial f$, then we get the slightly weaker bound $\liminf f(x_k) -\inf f \leqslant c^2\alpha/2(1+d(x_0,X))$ \cite[Exercise 3.6]{bertsekas2015convex}.

 The gradient method with momentum with $\gamma = 0$ was introduced by Polyak \cite{polyak1964some}. It admits a nearly optimal local convergence rate for twice continuously differentiable strongly convex functions \cite[Theorem 9]{polyak1964some}. Nesterov showed that it admits a globally optimal convergence rate \cite[Theorem 2.1.13]{nesterov2018introductory} if one chooses $\beta = \gamma$ in an appropriate manner. With variable momentum parameters, it also has an optimal rate for convex functions with Lipschitz gradients whose infimum is attained \cite{nesterov1983method}. If one relaxes the convexity assumption, then with a suitable choice of parameters $\alpha,\beta,$ and $\gamma$, the gradients $\nabla f(x_k)$ converge to zero \cite[Lemmas 1,2,3]{zavriev1993heavy} for any initial points $x_{-1},x_0 \in \mathbb{R}^n$. If in addition $f$ is coercive and satisfies the Kurdyka-\L{}ojasiewicz inequality \cite{kurdyka1998gradients} at every point and $x_{-1}=x_0$, then the iterates have finite length \cite[Theorem 4.9]{ochs2014ipiano}. In the nonsmooth setting that we consider in this thesis, there seems to be no results to the best of our knowledge.

The incremental subgradient method is a special case of the stochastic subgradient method with random reshuffling where the components are visited in a fixed order. It can be traced back to the Widrow-Hoff least mean squares method \cite{widrow1960adaptive} for minimizing a finite sum of convex quadratics in 1960. It was pointed out later by Kohonen that with sufficiently small constant step sizes, the limit points of the iterates of the least mean squares method are close to a minimum of the objective function \cite{kohonen1974adaptive}. With diminishing step sizes that are not summable but square summable, the least mean squares method converges to a minimum of the problem \cite{luo1991convergence}. For convex objectives, the incremental subgradient method with constant step size $\alpha>0$ satisfies $\liminf_{k \rightarrow \infty} f(x_k) \leqslant \inf_{ \mathbb{R}^n} f + C\alpha$ for some $C>0$ \cite[Proposition 2.1]{nedic2001incremental}, provided that $\inf_{\mathbb{R}^n} f>-\infty$ and the subgradients of the components $f_i$ are uniformly bounded. We refer the readers to the survey paper \cite{bertsekas2011incremental}, the textbook \cite{bertsekas2015convex}, and references therein for a more detailed discussion on the subject.

The stochastic subgradient method with random reshuffling is a stochastic version of the incremental subgradient method. It was shown recently that the stochastic gradient method with random reshuffling outperforms the incremental gradient method in expectation on strongly convex functions with quadratic components \cite[Theorem 2]{gurbuzbalaban2021random}, under certain choices of diminishing step sizes. If the objective function is strongly convex and differentiable with Lipschitz gradients among other assumptions, then the iterates and the corresponding function values of the stochastic gradient method with random reshuffling and constant step size eventually lie in a neighborhood of the minimizer \cite[Theorem 1]{mishchenko2020random} and a neighborhood of the minimum \cite[Theorem 1]{nguyen2021unified} respectively, both in expectation. By relaxing the strong convexity assumption to mere convexity, the function values evaluated at the average iterates $\hat{x}_k := (\sum_{l = 0}^k x_l)/k$ eventually lie in a neighborhood of the minimum in expectation \cite[Theorem 3]{mishchenko2020random} \cite[Remark 1]{nguyen2021unified}. By further removing the convexity assumption, the minimum norm of the gradients eventually lies in a neighborhood of zero in expectation \cite[Theorem 4]{mishchenko2020random} \cite[Corollary 1, Corollary 3]{pauwels2021incremental} (see also \cite[Theorem 4]{nguyen2021unified} for a similar result). If in addition assuming a certain Kurdyka-\L{}ojasiewicz property, bounded iterates of random reshuffling with certain diminishing step sizes converge to critical points \cite{li2023convergence}. The long-term behavior of the iterates for nonconvex and nonsmooth objective functions has so far remained elusive.

Despite the empirical success of incorporating momentum into random reshuffling \cite{sutskever2013importance}, the theoretical understanding of such methods is limited. So far, the only guarantees available are for modified versions \cite{tran2021smg,tran2022nesterov}. The work of Tran \textit{et al.} \cite{tran2021smg} in 2021 studied a modified version of stochastic gradient method with random reshuffling and heavy ball. The momentum is constant within every iteration (epoch) and is equal to the average of the gradients evaluated in the previous epoch. With the modification, the norm of gradients of the average iterates $\hat{x}_k$ eventually lie in a neighborhood of zero in expectation \cite[Corollary 1]{tran2021smg}, under various assumptions \cite[Assumption 1]{tran2021smg}. A modified stochastic gradient method with random reshuffling and Nesterov's momentum was studied recently \cite{tran2022nesterov}. The momentum is only applied at the level of the outer loop, at the end of each iteration (epoch). In this setting, the function values eventually lie a neighborhood of the minimum when the component functions are convex \cite[Theorem 1]{tran2022nesterov}, among other assumptions.

Coordinate descent methods are the object of the survey paper \cite{wright2015coordinate} by Wright in 2015. The idea of coordinate descent methods is to optimize with respect to one variable at a time. It was first studied under the framework of univariate relaxation \cite[Section 14.6]{ortega1970}. 
With exact line search and almost cyclic rule or Gauss-Southwell rule for cycling over the coordinates, the coordinate descent method converges linearly to a minimizer of a strongly convex objective that is twice differentiable \cite[Theorem 2.1]{luo1992convergence}. More recently, global convergence of random coordinate descent method was established for convex objectives with Lipschitz continuous partial derivatives \cite{nesterov2012efficiency}. In contrast to cyclic coordinate descent methods, random coordinate descent methods choose a coordinate randomly at each iteration instead of following a cycling rule. Similar to the stochastic subgradient method with random reshuffling, the random-permutations cyclic coordinate descent method (\cref{alg:rcd}) considered in this thesis is easier to implement than the random coordinate descent method as it requires only sequential access of the data \cite{gurbuzbalaban2020randomness}. Using \cite[Lemma 3.3, remark 3.2]{beck2013convergence}, the convergence of the random-permutations cyclic coordinate descent method can be deduced for coercive functions with locally Lipschitz gradients. The superior performance of the random-permutations cyclic coordinate descent method was observed in numerical experiments, and was supported by analysis for convex quadratic objectives \cite{lee2019random,gurbuzbalaban2020randomness}. For objective functions without a locally Lipschitz gradient, the study of the method appears to be absent from the literature.


\chapter{Approximation of first-order methods by subgradient trajectories}
\label{ch:approx}
In this chapter, we study the connection between the iterates of first-order methods and the subgradient trajectories. In order to do so, we propose a notion of approximation that can be shown to hold for many first-order methods, under method-dependent regularity assumptions. This approximation is central for characterizing local and global stability in the subsequent chapters. The material of this chapter is based on the following articles:\vspace*{3mm}

\noindent C\' edric Josz, Lexiao Lai, Global stability of first-order methods for coercive tame functions,\emph{ Mathematical Programming}, 2023 [\href{https://arxiv.org/abs/2308.00899}{preprint}] [\href{https://doi.org/10.1007/s10107-023-02020-9}{journal doi}]
 \vspace*{3mm}
 
 \noindent C\' edric Josz, Lexiao Lai, Xiaopeng Li, Proximal random reshuffling under local Lipschitz continuity,\emph{ arXiv preprint}, 2024 [\href{https://arxiv.org/abs/2408.07182}{preprint}] \vspace*{3mm}

We refer to an iterative method with constant step size as a set-valued mapping $\mathcal{M}: \mathbb{R}^{(\mathbb{R}^n)} \times (0,\infty) \times 2^{(\mathbb{R}^n)} \times \mathbb{N}\rightrightarrows (\mathbb{R}^n)^\mathbb{N}$ which, to an objective function $f:\mathbb{R}^n \rightarrow \mathbb{R}$, a constant step size $\alpha \in (0,\infty)$, a set $X_0\subset \mathbb{R}^n$, and a natural number $k_0$ associates a set of sequences in $\mathbb{R}^n$ whose $k_0$\textsuperscript{th} term is contained in $X_0$. Many optimization algorithms can be viewed as such set-valued mappings, including \cref{alg:sg,alg:mag,alg:rrm,alg:rcd}.

We define the precise meaning of approximation in \cref{def:approx_flow_new}. This definition is inspired by a series of works that resort to continuous-time dynamics to analyze discrete-time dynamics. The idea that discrete dynamics resemble their continuous counterpart dates back to Euler \cite{euler1792institutiones,blanton2006foundations}. He proposed discretizing ordinary differential equations to find approximate solutions. This technique is also used to prove the existence of solutions via the Cauchy-Peano theorem \cite[Theorem 1.2]{coddington1955theory}. Ljung \cite{ljung1977analysis} and Kushner \cite{kushner1977general,kushner1977convergence} established a connection between the asymptotic behavior of discrete and continuous dynamics with noise, which is particularly useful when they are governed by conservative fields. Benaïm \textit{et al.} \cite{benaim2005stochastic,benaim2006stochastic,benaim2006dynamics} strengthened this connection by relaxing some assumptions and incorporating set-valued dynamics. Due to its importance in analyzing optimization algorithms in recent years \cite{borkar2009stochastic,duchi2018stochastic,davis2020stochastic,bolte2020conservative,salim2018random,pauwels2021incremental,xiao2024adam}, we elaborate on their contribution.

Benaïm, Hofbauer, and Sorin consider a closed set-valued mapping $F:\mathbb{R}^n \rightrightarrows \mathbb{R}^n$ with nonempty convex compact values for which there exists $C>0$ such that $\sup \{ \|s\| : s \in F(x)\} \leqslant C(1+\|x\|)$ for all $x\in \mathbb{R}^n$. They show that discrete trajectories of $F$ can be approximated by its continuous trajectories in the following sense. Let $(x_k)_{k\in \mathbb{N}}$ be a bounded sequence such that $x_{k+1} \in x_k + \alpha_k F(x_k)$ for all $k\in\mathbb{N}$ where $\alpha_k>0$, $\sum_{k=0}^\infty \alpha_k = \infty$, and $\sum_{k=0}^\infty \alpha_k^2 < \infty$ (Ljung and Kushner also assume this, following Robbins and Monro \cite{robbins1951stochastic}). Let $t_0 := 0$ and $t_k := \alpha_0+\hdots+\alpha_{k-1}$ for $k\geqslant 1$. Consider the linear interpolation defined by 
\begin{equation*}
    x(t) := x_k + \frac{t-t_k}{t_{k+1}-t_k}(x_{k+1}-x_k), ~~~\forall t\in[t_k,t_{k+1})
\end{equation*}
as well as the time shifted interpolations $x^{\tau}(\cdot) := x(\tau + \cdot)$ where $\tau\geqslant 0$. The key insight is that for any sequence $\tau_k\rightarrow \infty$, the shifted interpolations $x^{\tau_k}(\cdot)$ subsequentially converge to a solution to the differential inclusion $x'(t) \in F(x(t))$ for almost every $t>0$ in the topology of uniform convergence on compact intervals \cite[Theorem 4.2]{benaim2005stochastic}. When one specifies that $F:= -\partial f$ where $f:\mathbb{R}^n\rightarrow\mathbb{R}$ is a locally Lipschitz function, this can be used to derive asymptotic properties of some important algorithms in optimization.

We are interested in the constant step size regime for which it is hopeless to try to establish uniform convergence over compact intervals (for a fixed discrete trajectory, as above). We thus ask for something weaker from the algorithms we analyze, namely, that the continuous and discrete time dynamics are close in uniform norm up to any given time. We ask that this holds for a set of time shifted trajectories to account for multistep methods. We next give a precise meaning to this notion. We will use $\lfloor t \rfloor$ to denote the floor of a real number $t$ which is the unique integer such that $\lfloor t \rfloor \leqslant t < \lfloor t \rfloor + 1$.

\begin{definition}
\label{def:approx_flow_new}
An iterative method $\mathcal{M}$ is approximated by subgradient trajectories of a locally Lipschitz function $f:\mathbb{R}^n \rightarrow \mathbb{R}$ (up to a positive multiplicative constant) if there exists $c>0$ such that for any compact sets $X_0, X_1 \subset \mathbb{R}^n$ and for any $\epsilon,T>0$, there exists $\bar{\alpha}>0$ such that for all $\alpha \in (0,\bar{\alpha}]$, $k_0 \in \mathbb{N}$, and $(x_k)_{k \in \mathbb{N}} \in \mathcal{M}(f,\alpha,X_0,k_0)$ for which $x_0, \ldots, x_{k_0} \in X_1$, there exists an absolutely continuous function $x:[0,T]\rightarrow \mathbb{R}^n$ such that
\begin{equation}
    \label{eq:ivp}
    x'(t) \in -c\partial f(x(t)),~~~\text{for almost every}~t\in [0,T],~~~x(0)\in X_0,
\end{equation}
and $\|x_{k}-x((k-k_0)\alpha)\|\leqslant \epsilon$ for $k=k_0,\hdots,k_0+\lfloor T/\alpha\rfloor$.
\end{definition}

 \begin{table}[ht]
\caption{Algorithms that satisfy \cref{def:approx_flow_new} with the corresponding scalings. The standing assumption is that $f$ is both tame and lower bounded.}
\vspace*{-3mm}
\begin{center}
\begin{tabular}{|c|c|c|} 
\hline
Algorithm & Assumption & Scaling \\
\hline
\multirow{3}{5cm}{Subgradient method with momentum} & \multirow{3}{5cm}{$f$ locally Lipschitz} & 
\multirow{3}{3cm}{$c = 1/(1-\beta)$} 
\\ 
&  &\\ 
&  &\\ 
\cline{1-3}
\multirow{3}{5cm}{Random reshuffling with momentum} & \multirow{3}{5cm}{$f_i$ locally Lipschitz and subdifferentially regular\tablefootnote{A locally Lipschitz function is subdifferentially regular \cite[2.3.4 Definition]{clarke1990} if its generalized directional derivative \eqref{eq:gen_dir} \cite[p. 25]{clarke1990} agrees with the classical directional derivative.}} & \multirow{3}{3cm}{$c = N/(1-\beta)$}\\ 
& &\\ 
& &\\ 
\cline{1-3}
\multirow{3}{5cm}{Random-permutations cyclic coordinate descent method} & \multirow{3}{5cm}{$f$ continuously differentiable} & \multirow{3}{3cm}{$c = 1$}\\ 
& &\\ 
& &\\ 
\hline
\end{tabular}
\end{center}\label{tab:appr_alg}
\end{table}
\cref{def:approx_flow_new} is different from \cite[Definition 3]{josz2023globalstability} as it asks the approximation to hold over any given time interval, instead of over a certain one. While this was unnecessary in the context of \cite{josz2023globalstability} (see also \cref{ch:global_stability}) for establishing global stability guarantees for coercive tame functions, it becomes handy in studying local stability (see \cref{ch:local_stability}). Despite the fact that we need the approximation to hold in a stronger sense, \cref{def:approx_flow_new} continues to be satisfied by the algorithms considered in \cite{josz2023globalstability}, with the additional assumption that the objective function is tame and lower bounded.
 In the remainder of this chapter, we prove the subgradient method (\cref{alg:sg}) satisfies \cref{def:approx_flow_new}. Using the techniques in \cite{josz2023globalstability}, the same proof can be adapted to show that \cref{alg:mag,alg:rrm,alg:rcd} are also approximated by subgradient trajectories under method-dependent regularity assumptions. We summarize them in Table \ref{tab:appr_alg}.
 
We next state the main theorem of this chapter.
\begin{theorem}\label{thm:subg_approx}
	The subgradient method is approximated by subgradient trajectories of any locally Lipschitz tame function that is lower bounded.
\end{theorem}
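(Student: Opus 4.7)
The plan is to argue by contradiction in the spirit of Benaïm, Hofbauer, and Sorin \cite{benaim2005stochastic}. Suppose the theorem fails; then there exist compact sets $X_0,X_1\subset\mathbb{R}^n$ and $\epsilon,T>0$ such that for every $j\in\mathbb{N}$ one can find $\alpha_j\in(0,1/j]$, $k_0^j\in\mathbb{N}$, and a sequence $(x_k^j)_{k\in\mathbb{N}}$ generated by \cref{alg:sg} with step $\alpha_j$, satisfying $x_0^j,\hdots,x_{k_0^j}^j\in X_1$, yet no absolutely continuous $x:[0,T]\to\mathbb{R}^n$ with $x'(t)\in-\partial f(x(t))$ for almost every $t\in[0,T]$ and $x(0)\in X_0$ fulfills $\|x_k^j-x((k-k_0^j)\alpha_j)\|\leqslant\epsilon$ for all $k\in\{k_0^j,\hdots,k_0^j+K_j\}$, where $K_j:=\lfloor T/\alpha_j\rfloor$. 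Reindexing $y_k^j:=x_{k_0^j+k}^j$, one has $y_0^j\in X_0$ compact, and after passing to a subsequence, $y_0^j\to y_0\in X_0$.

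\textbf{A priori boundedness and interpolation.} The first technical step is to show that the reindexed iterates $y_0^j,\hdots,y_{K_j}^j$ stay in a common compact set $K\subset\mathbb{R}^n$ for large $j$. Tameness and lower boundedness enter here: by \cref{prop:chain} (which uses tameness) and $\inf f>-\infty$, any subgradient trajectory $x:[0,T]\to\mathbb{R}^n$ with $x(0)\in X_0$ satisfies $\int_0^T\|x'(t)\|^2\,dt\leqslant\sup_{X_0}f-\inf f<\infty$, so by Cauchy--Schwarz its length is at most $\ell:=\sqrt{T(\sup_{X_0}f-\inf f)}$, and in particular every such trajectory lies in the compact set $K_0:=B(X_0,\ell)$. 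A continuation/bootstrap argument combined with local Lipschitz continuity (which bounds $\partial f$ on any compact set) then shows that the discrete iterates also remain in a fixed enlargement $K:=B(K_0,1)$ throughout $\{0,\hdots,K_j\}$ once $\alpha_j$ is sufficiently small. Letting $L$ be a Lipschitz constant of $f$ on $K$, the piecewise linear interpolations $\bar y^j:[0,T]\to\mathbb{R}^n$ defined by $\bar y^j(k\alpha_j)=y_k^j$ are uniformly $L$-Lipschitz and uniformly bounded in $K$, so by Arzelà-Ascoli a subsequence converges uniformly on $[0,T]$ to an $L$-Lipschitz function $y:[0,T]\to\mathbb{R}^n$ with $y(0)=y_0\in X_0$.

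\textbf{Passage to the limit and contradiction.} It remains to show $y'(t)\in-\partial f(y(t))$ for almost every $t\in[0,T]$. On each subinterval $(k\alpha_j,(k+1)\alpha_j)$ one has $(\bar y^j)'(t)=-s_k^j\in-\partial f(y_k^j)=-\partial f(\bar y^j(k\alpha_j))$. Since $\alpha_j\to 0$, $\bar y^j\to y$ uniformly, and $\partial f$ is upper semicontinuous with nonempty convex compact values by \cref{prop:clarke}, a standard Mazur/measurable-selection argument yields $y'(t)\in-\partial f(y(t))$ almost everywhere. Hence $y$ is a subgradient trajectory with $y(0)\in X_0$, and uniform convergence gives $\|y_k^j-y(k\alpha_j)\|<\epsilon$ for all $k\in\{0,\hdots,K_j\}$ and $j$ large along the subsequence, contradicting the standing assumption.

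\textbf{Main obstacle.} The delicate point is the a priori boundedness of the discrete iterates: local Lipschitz continuity alone does not suffice, since the naive cumulative-displacement estimate is of order $TL_K$ where $L_K$ is the Lipschitz constant of $f$ on $K$, and $K$ cannot always be chosen large enough to absorb this without a further structural input. Tameness (through \cref{prop:chain}) combined with lower boundedness of $f$ is what localizes the candidate continuous-time trajectories, and the bootstrap transfers this localization to the iterates. The subsequent upper-semicontinuity/limit step is comparatively standard but must still be executed carefully to identify the piecewise-constant derivative limits with an element of $-\partial f(y(t))$ for almost every $t$.
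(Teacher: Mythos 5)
Your proposal is correct and follows essentially the same route as the paper: argue by contradiction, form piecewise-linear interpolations of the iterates, extract a uniform (Arzelà–Ascoli) limit, and use upper semicontinuity of $\partial f$ to identify the limit as a subgradient trajectory, thereby contradicting the assumed failure of approximation. You also correctly identify the only subtle point, namely a priori boundedness of the iterates on $[0,T]$, and the correct ingredients for it (the chain rule of \cref{prop:chain}, lower boundedness, and Cauchy--Schwarz to bound the length of continuous trajectories).

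The one place where your write-up is thinner than the paper is precisely there. You assert that ``a continuation/bootstrap argument combined with local Lipschitz continuity \ldots then shows that the discrete iterates also remain in a fixed enlargement $K$,'' but this is not a one-line bootstrap: as you yourself note in the ``Main obstacle'' paragraph, a single-step displacement bound of size $\alpha L_K$ accumulates to $T L_K$, which is not a priori controllable. What the paper actually does, in its standalone \cref{prop:bounded}, is run a second contradiction-plus-compactness argument inside the boundedness step: suppose for every radius $r$ there are step sizes $\alpha_m \to 0$ and iterate sequences escaping $B(0,r)$ for the first time at index $k_m \leqslant \lfloor T/\alpha_m\rfloor$; interpolate only up to that first escape, extract a subsequence via Arzelà--Ascoli and Banach--Alaoglu, identify the limit as a subgradient trajectory via the Convergence Theorem of Aubin--Cellina, use the length bound $Q$ from \cref{prop:chain} to show the limit (and hence $x^m_{k_m}$ for large $m$) lies in $B(0,R+Q+1/2)$, and contradict escape since one more step moves by at most $\alpha_m L \leqslant 1/2$. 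In other words, ``the bootstrap transfers this localization to the iterates'' is the conclusion of a proposition-level argument that is itself another instance of the interpolation/compactness machinery, not an elementary continuation. Your sketch would need to expand that sentence into this subsidiary argument to be complete; everything else matches the paper's proof.
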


In order to prove \cref{thm:subg_approx}, we need to first show the uniform boundedness of iterates for any given $T>0$, which is the object of the following proposition.
\begin{proposition}
\label{prop:bounded}
    Let $f:\mathbb{R}^n\rightarrow \mathbb{R}$ be a locally Lipschitz tame that is lower bounded. Then for any bounded set $X_0 \subset \mathbb{R}^n$ and $T>0$, there exist $\bar{\alpha}, r>0$ such that for all iterates $(x_k)_{k\in \mathbb{N}}$ generated by the subgradient method with constant step size $\alpha \in (0, \bar{\alpha}]$ and $x_0\in X_0$, we have that $x_0, \ldots, x_{\lfloor T/\alpha\rfloor+1} \in B(0,r)$
\end{proposition}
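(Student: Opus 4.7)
The plan is to argue by contradiction via a compactness argument. Assuming the iterates escape any fixed ball within time $T$ for arbitrarily small step sizes, I extract a uniform limit of their piecewise-linear interpolations, verify that the limit is a subgradient trajectory of $f$, and combine this with an a priori bound on subgradient trajectories to derive a contradiction.

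The a priori bound is obtained as follows. Let $R_0 := \sup_{\overline{X_0}}\|\cdot\|$, $M_0 := \sup_{\overline{X_0}} f$ (both finite by compactness of $\overline{X_0}$ and continuity of $f$), and $f^* := \inf f$. If $x:[0,T']\to\mathbb{R}^n$ is a subgradient trajectory with $x(0)\in \overline{X_0}$, then the chain rule \cref{prop:chain} gives $\int_0^{T'}\|x'(s)\|^2\,ds = f(x(0))-f(x(T')) \leq M_0 - f^*$, and Cauchy-Schwarz yields $\|x(t)-x(0)\|\leq \sqrt{T'(M_0-f^*)}$ for $t\in[0,T']$. Since $\partial f$ is upper semicontinuous with nonempty convex compact values (\cref{prop:clarke}), standard existence results for differential inclusions produce maximal solutions on $[0,\infty)$ starting from any point in $\overline{X_0}$; by the above estimate they remain in $B(0,r_1)$ on $[0,T]$ for $r_1 := R_0 + \sqrt{T(M_0-f^*)}$. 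Fix $r := r_1 + 1$ and let $L>0$ bound $\|\partial f\|$ on $B(0,r+1)$, which is finite by \cref{prop:clarke}.

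Suppose the conclusion fails. Then there exist $\alpha_n \downarrow 0$, $x_0^n \in X_0$, and iterates $(x_k^n)_k$ for which the first escape index $k_n$ of $(x_k^n)$ from $B(0,r)$ satisfies $k_n \leq \lfloor T/\alpha_n\rfloor+1$, with $x_0^n,\ldots,x_{k_n-1}^n \in B(0,r)$. The bound on $\|\partial f\|$ forces $\|x_{k_n}^n\|\leq r + \alpha_n L$. Define the piecewise-linear interpolation $\tilde x^n:[0,T]\to\mathbb{R}^n$ by connecting $x_0^n,\ldots,x_{k_n}^n$ on the grid $\{j\alpha_n\}_{j=0}^{k_n}$, extended by a constant beyond $k_n\alpha_n$ if necessary. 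These curves are uniformly $L$-Lipschitz and uniformly bounded, so Arzel\`a-Ascoli yields a subsequence converging uniformly on $[0,T]$ to an absolutely continuous $x:[0,T]\to\mathbb{R}^n$ with $x(0)\in \overline{X_0}$. Passing to a further subsequence so that $t_n := k_n\alpha_n \to t^* \in [0,T]$, uniform convergence together with $\|x_{k_n}^n\|>r$ forces $\|x(t^*)\|\geq r > r_1$, contradicting the a priori bound---provided one shows $x(\cdot)$ is itself a subgradient trajectory of $f$.

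The main obstacle is this last verification: $x'(t)\in -\partial f(x(t))$ for almost every $t \in [0,T]$. For $t \in (j\alpha_n,(j+1)\alpha_n)$, one has $(\tilde x^n)'(t) = -s_j^n$ with $s_j^n\in\partial f(x_j^n)$, and $\|x_j^n - \tilde x^n(t)\|\leq \alpha_n L$ together with $\tilde x^n \to x$ uniformly yields $x_j^n \to x(t)$. Since $(\tilde x^n)'$ is uniformly bounded, along a subsequence it converges weakly in $L^2([0,T];\mathbb{R}^n)$ to a function equal to $x'$ almost everywhere. Mazur's lemma produces strong-$L^2$ limits of convex combinations of the $(\tilde x^n)'$, and the upper semicontinuity with convex compact values of $\partial f$ (\cref{prop:clarke}) then yields $x'(t)\in -\partial f(x(t))$ almost everywhere. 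This is the familiar closure property of differential inclusions with upper semicontinuous, convex-compact-valued right-hand sides. The only use of tameness of $f$ in the proof is through the chain rule \cref{prop:chain}, which underpins the a priori bound in the second paragraph.
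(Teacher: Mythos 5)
Your proof is correct and takes essentially the same approach as the paper: argue by contradiction, piecewise-linearly interpolate iterates with step sizes decreasing to zero, apply Arzel\`a--Ascoli to extract a uniform limit on $[0,T]$, and use the chain rule (\cref{prop:chain}) together with Cauchy--Schwarz to bound subgradient trajectories over a finite time horizon. The paper invokes the Convergence Theorem of Aubin and Cellina to identify the limit as a subgradient trajectory, whereas you use weak $L^2$ convergence followed by Mazur's lemma and upper semicontinuity of $\partial f$; these are interchangeable, and the final contradiction is obtained the same way.
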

\begin{proof}
   We assume without loss of generality that $X_0 \subset \mathbb{R}^n$ is nonempty and compact. Fix $T>0$. There exists $Q>0$ such that for any subgradient trajectory $x:[0,T]\rightarrow \mathbb{R}^n$ of $f$ with $x_0 \in X_0$,
it holds that $d(x(\tilde{T}),X_0) \leqslant \|x(\tilde{T})-x(0)\| \leqslant \int_0^{\tilde{T}} \|x'(t)\|~dt \leqslant \int_0^{T} \|x'(t)\|~dt \leqslant Q$ for all $\tilde{T} \in [0,T]$. Indeed, we have
\begin{subequations}
\label{eq:cont_len}
    \begin{align}
        \int_{0}^T \|x'(t)\|~dt &\leqslant \sqrt{T} \sqrt{\int_{0}^T \|x'(t)\|^2~dt}\label{eq:cont_len_a} \\
        &= \sqrt{T} \sqrt{\int_{0}^T -(f\circ x)'(t)~dt} \label{eq:cont_len_b}\\
        &= \sqrt{T} \sqrt{f(x(0)) - f(x(T))} \label{eq:cont_len_c}\\
        &\leqslant \sqrt{T} \sqrt{\sup_{x_0\in X_0}f(x_0) - \inf_{y\in \mathbb{R}^n}f(y)}=: Q. \label{eq:cont_len_d}
    \end{align}
\end{subequations}
Above, \eqref{eq:cont_len_a} follows from the Cauchy-Schwarz inequality and \eqref{eq:cont_len_b} follows from the chain rule of subgradient trajectories (\cref{prop:chain}).

We next reason by contradiction and assume that for any $r>0$, there exist a positive sequence $\alpha_m \rightarrow 0$ and sequences of iterates  $(x_k^m)_{k\in \mathbb{N}}$ generated by the subgradient method with constant step size $\alpha_m$ and $x_0^m \in X_0$ such that $\max\{\|x_k^{m}\|:k = 0, \ldots,\lfloor T/\alpha_m\rfloor+1\}>r$ for any $m\in \mathbb{N}$. Let $R>0$ such that $X_0 \subset B(0,R)$. Recall that by \cref{prop:clarke}, $\partial f$ is upper semicontinuous with nonempty, compact and convex values. Thus, there exists $L>0$ such that $\partial f(B(0,R+Q+1)) \subset B(0,L)$ by \cite[Proposition 3 p. 42]{aubin1984differential}. Take $r = R+Q+1$ and we may assume that $\bar{\alpha}_m \leqslant \min\{T, 1/(2L)\}$ without loss of generality. For each $m\in \mathbb{N}$, let $k_m := \min\{k\in \mathbb{N}:x_{k}^m \in B(0,r), x_{k+1}^m \not\in B(0,r)\}$, we have that $k_m \leqslant \lfloor T/\alpha_m\rfloor$ following our assumption.  Thus $ k_m\alpha_m \in [0,T]$ for any $m \in \mathbb{N}$ and $\bar{T}:= \liminf_{m \rightarrow \infty} k_m\alpha_m \in [0, T]$. By taking a subsequence if necessary, we assume that $\lim_{m \rightarrow \infty} k_m\alpha_m = \bar{T}$.

For each sequence $x_0^m, x_1^m, \ldots, x_{k_m}^m$, consider the (extended) linear interpolation $\bar{x}^m(\cdot)$ defined from $[0,\max\{ k_m\alpha_m,\bar{T}\}]$ to $\mathbb{R}^n$ by
	\begin{equation*}
		\bar{x}^m(t) = x_k^m + (t -  k\alpha_m) \frac{x_{k+1}^m - x_{k}^m}{\alpha_m}
	\end{equation*}
	for any $t \in [ k\alpha_m, \alpha_m (k+1)]$ and $k \in\{ 0,1, \ldots, k_m - 1\}$. Also, $\bar{x}^m(t) = x_{k_m}^m$ for $t \in [ k_m\alpha_m, \bar{T}]$ if $ k_m\alpha_m < \bar{T}$. As $x_k^m \in B(0,r)$ for $k = 0,\ldots, k_m$, we know that $\bar{x}^m(t) \in B(0,r)$ for all $t\in [0,\max\{ k_m\alpha_m,\bar{T}\}]$ by the convexity of $B(0,r)$. For any $t \in ( k\alpha_m, \alpha_m (k+1))$ and $k \in\{ 0,1, \ldots k_m - 1\}$, it holds that $(\bar{x}^m)'(t) = (x_{k+1}^m - x_{k}^m)/\alpha_m \in -\partial f(x_k^m) \subset -\partial f(B(0,r)) \subset B(0,L)$. Also, $(\bar{x}^m)'(t) = 0$ for any $t \in ( k_m\alpha_m, \bar{T})$ if $ k_m\alpha_m < \bar{T}$. By successively applying the Arzel\`a-Ascoli and the Banach-Alaoglu theorems (see \cite[Theorem 4 p. 13]{aubin1984differential}), there exists a subsequence of $(\bar{x}^m(\cdot))_{m\in \mathbb{N}}$ (again denoted $(\bar{x}^m(\cdot))_{m\in \mathbb{N}}$) and an absolutely continuous function $x:[0,\bar{T}]\rightarrow \mathbb{R}^n$ such that $\bar{x}_{|[0,\bar{T}]}^{m}(\cdot)$ converges uniformly to $x(\cdot)$ and $(\bar{x}_{|[0,\bar{T}]}^{m})'(\cdot)$ converges weakly to $x'(\cdot)$ in $L^1([0,\bar{T}],\mathbb{R}^n)$. In addition, for almost every $t \in (0,\bar{T})$, since $ k_m\alpha_m \rightarrow \bar{T}$, it holds that   $t\in ( k\alpha_m, (k+1)\alpha_m)$ for any sufficiently large $m$ and some $k \in\{ 0,1, \ldots k_m - 1\}$. Fix any such $t$, for all sufficiently large $m$, it holds that
\begin{subequations}
\begin{align*}
    (\bar{x}^{m}(t),(\bar{x}^{m})'(t)) 
    & = \left(x_k^{m} + (t- k\alpha_m)\frac{x_{k+1}^{m}-x_k^{m}}{\alpha_m},\frac{x_{k+1}^{m}-x_k^{m}}{\alpha_m}\right) \\
    & \in \left(\{x_k^{m}\} - (t- k\alpha_m)\partial f(x_k^{m})\right) \times \left(-\partial f(x_k^{m})\right)  \\
    & = \{x_k^{m}\} \times \left(-\partial f(x_k^{m})\right)  + ( k\alpha_m-t)\partial f(x_k^{m}) \times \{0\}  \\
    & \subset \mathrm{graph}(-\partial f) + B(0,L\alpha_m)\times \{0\}.
\end{align*}
\end{subequations}
According to \cite[Convergence Theorem p. 60]{aubin1984differential}, it follows that $x'(t) \in -\partial f(x(t))$ for almost every $t\in (0,\bar{T})$. Thus, $x(\cdot)$ is a subgradient trajectory of $f$. The sequence of initial points $(x^{m}(0))_{m\in\mathbb{N}}$ lies in the compact set $X_0$, hence its limit $x(0)$ lies in $X_0$ as well. Therefore, $\int_{0}^{\bar{T}} \|x'(t)\|~dt \leqslant Q$, and thus $x(\bar{T}) \in B(X_0,Q) \subset B(0,R+Q)$. Also, it holds that $\lim_{m\rightarrow \infty} \bar{x}^m(\bar{T}) = x(\bar{T}) \in B(0,R+Q)$ and $\|\bar{x}^m(\bar{T}) - x_{k_m}^m\| = \|\bar{x}^m(\bar{T}) - \bar{x}^m( k_m\alpha_m)\| \leqslant L|\bar{T} -  k_m\alpha_m| \rightarrow 0$ as $m\rightarrow \infty$. Thus, $x_{k_m}^m \in B(0,R+Q+0.5)$ for all sufficiently large $m$. Meanwhile, we have that $\|x_{k_m+1}^m  - x_{k_m}^m\| \leqslant \alpha_m L \leqslant 0.5$, contradicting the assumption that $x_{k_m+1}^m \not\in B(0,R+Q+1)$.
\end{proof}

We are now ready to prove \cref{thm:subg_approx}.
\begin{proof}[Proof of \cref{thm:subg_approx}]
Since for any iterates $(x_k)_{k \in \mathbb{N}}$ generated by the subgradient method and $k_0 \geqslant 1$, $(x_k)_{k\geqslant k_0}$ is unrelated to $x_0,\ldots, x_{k_0 - 1}$ if given $x_{k_0}$, it suffices to show that the subgradient method satisfies \cref{def:approx_flow_new} with $k_0 := 0$ and $X_1 := X_0$.

 Let $f:\mathbb{R}^n\rightarrow \mathbb{R}$ be a locally Lipschitz tame function that is lower bounded. Without loss of generality, let  $X_0\subset \mathbb{R}^n$ be nonempty and compact and $T>0$.  Invoking \cref{prop:bounded}, there exist $\bar{\alpha}, r>0$ such that for all iterates $(x_k)_{k\in \mathbb{N}}$ generated by the subgradient method with constant step size $\alpha \in (0, \bar{\alpha}]$ and $x_0\in X_0$, we have that $x_0, \ldots, x_{\lfloor T/\alpha\rfloor+1} \in B(0,r)$. Let $(\alpha_m)_{m \in \mathbb{N}}$ denote a sequence of positive numbers that converges to zero. Without loss of generality, we may assume that the sequence is bounded above by $\bar{\alpha}$. To each term in the sequence, we attribute a sequence $(x_k^{m})_{k\in\mathbb{N}}$ generated by the subgradient method with step size $\alpha_m$ and initialized in $X_0$. Thus, $x_0^{m},\hdots,x_{\lfloor T/\alpha_m \rfloor+1}^{m} \in B(0,r)$. Consider the linear interpolation of those iterates, that is to say, the function $\bar{x}^{m}:[0,T]\rightarrow\mathbb{R}^n$ defined by $\bar{x}^{m}(t) := x_k^{m} + (t- k\alpha_m)(x_{k+1}^{m}-x_k^{m})/\alpha_m$ for all $t\in [ k\alpha_m , \min\{(k+1)\alpha_m,T\}]$ and $k\in \{0,\hdots,\lfloor T/\alpha_m \rfloor\}$. Following similar arguments as in the proof of \cref{prop:bounded}, a subsequence of $(\bar{x}^m(\cdot))_{m\in\mathbb{N}}$ converges uniformly to a subgradient trajectory of $f$ that initialized in $X_0$.

The conclusion of the theorem now follows. To see why, one can reason by contradiction and assume that there exists $\epsilon>0$ such that for all $\hat{\alpha} \in (0,\bar{\alpha}]$, there exist $\alpha \in (0,\hat{\alpha}]$ and a sequence $(x_k)_{k \in \mathbb{N}}$ generated by the subgradient method with step size $\alpha$ and initialized in $X_0$ such that, for any subgradient trajectory $x(\cdot)$ of $f$ initialized in $X_0$, it holds that $\|x_k - x(k\alpha) \| > \epsilon$ for some $k\in \{0,\ldots, \lfloor 0,T/\alpha \rfloor\}$. We can then generate a sequence $(\alpha_m)_{m \in \mathbb{N}}$ of positive numbers converging to zero with attributed sequences $(x_k^m)_{k\in \mathbb{N}}$ generated by the subgradient method with step size $\alpha_m$ such that, for any subgradient trajectory $x(\cdot)$ of $f$ initialized in $X_0$, it holds that $\|x_k - x(k\alpha) \| > \epsilon$ for some $k\in \{0,\ldots, \lfloor 0,T/\alpha \rfloor\}$. We obtain a contradiction with the conclusion of the previous paragraph.
\end{proof}

\begin{remark}
\label{remark:dontchev}

When proving both \cref{prop:bounded} and \cref{thm:subg_approx}, we use the same strategy which consists in taking sequences generated by the subgradient method with smaller and smaller constant step size, and show that a subsequence of their linear interpolations converges uniformly to a subgradient trajectory up to a finite time. The same strategy can be used to show the approximation of other first-order methods \cite[Section 4]{josz2023globalstability}.

Several discretization methods of initial value problems with differential inclusions were studied in \cite{taubert1981converging,aubin1984differential,clarke1990,filippov2013differential} (see also a survey on the subject by Dontchev and Lempio \cite{dontchev1992difference}). Assume that the set-valued mapping underlying the differential inclusion is upper semicontinuous with nonempty compact convex values, such that the norm of their elements are upper bounded by a linear function of the norm of the argument. Then over any finite time horizon, a subsequence of linear interpolations of the Euler method with smaller and smaller step sizes converges uniformly to a solution to the initial value problem \cite[Theorem 2.2]{dontchev1992difference}. If in addition the set-valued mapping is bounded, then a class of linear multistep methods has the same convergence property as above \cite[p. 127, Theorem]{taubert1981converging} (see also \cite[Convergence Theorem 3.2]{dontchev1992difference}). We build on the techniques developed in the above works when checking Definition \ref{def:approx_flow_new}. We adapt them so that they can handle the case where the set of initial points is a compact set and the case where $\partial f$
is not accessible (as in \cref{alg:rrm,alg:rcd}).

\end{remark}


\chapter{Local stability of first-order methods}\label{ch:local_stability}
In this chapter, we introduce a notion of discrete Lyapunov stability and propose necessary and sufficient conditions for stability, in order to analyze the behavior of first-order methods in the vicinity of a local minimum.  The material of this chapter is based on the following article:\vspace*{3mm}

\noindent C\' edric Josz, Lexiao Lai, Lyapunov stability of the subgradient method with constant step size, \emph{Mathematical Programming}, 2023 [\href{https://arxiv.org/abs/2211.14850}{preprint}] [\href{https://doi.org/10.1007/s10107-023-01936-6}{journal doi}]
 \vspace*{3mm}
 
 We consider the first-order methods with constant step size for minimizing a locally Lipschitz function $f:\mathbb{R}^n \rightarrow \mathbb{R}$. While these methods are often used in practice to solve nonconvex and nonsmooth problems, there is little theoretical understanding of the behavior of the iterates, as reviewed in \cref{sec:literature}. Given the absence of theoretical results, in this chapter we take a first step by investigating the behavior of first-order methods in the vicinity of a local minimum of $f$. In order to do so, we propose a notion of stability akin to Lyapunov stability in dynamical systems \cite{liapounoff1907probleme} \cite[Equation (5.6)]{sastry2013nonlinear}. Recall from \cref{ch:approx} that we may denote an iterative method by a set-valued mapping  $\mathcal{M}$. For notational convenience, let $f:\mathbb{R}^n \rightarrow \mathbb{R}$, $\alpha \in (0,\infty)$, and $X_0\subset \mathbb{R}^n$, we denote by $\mathcal{M}(f,\alpha,X_0):= \mathcal{M}(f,\alpha,X_0,0)$ the set of sequences generated by the method with constant step size $\alpha$ and initialized in $X_0$.

\begin{definition}
\label{def:discrete_lyapunov}
We say that $x^*\in \mathbb{R}^n$ is an $\mathcal{M}$-stable point of a locally Lipschitz function $f:\mathbb{R}^n\rightarrow\mathbb{R}$ if for all $\epsilon>0$, there exist $\delta>0$ and $\bar{\alpha}>0$ such that
\begin{equation*}
	(x_k)_{k\in \mathbb{N}} \in \mathcal{M}\left(f,(0,\bar{\alpha}],B(x^*,\delta)\right)\implies \{x_k\}_{k\in \mathbb{N}}\subset B(x^*,\epsilon).
\end{equation*}
\end{definition}

Informally, a point is $\mathcal{M}$-stable if all of the iterates of the iterative method $\mathcal{M}$ remain in any neighborhood of it, provided that the initial point is close enough to it and that the step size is small enough. Denote the subgradient method (\cref{alg:sg}) by $\mathcal{M}_S$, we illustrate the notion of stability in the following example.

\begin{example}[Local minima of sharp and weakly convex functions are stable]
\label{eg:sharpness}
Consider $x^* \in \mathbb{R}^n$ and a locally Lipschitz function $f:\mathbb{R}^n\rightarrow \mathbb{R}$ such that, for all $x \in \mathbb{R}^n$ and $s \in \partial f(x)$, we have
\begin{equation}
\label{eq:sharp_weak}
  \mu \|x-x^*\| \leqslant  f(x)-f(x^*) \leqslant \langle x - x^* , s \rangle + \frac{\rho}{2} \|x-x^*\|^2
\end{equation}
for some positive constants $\mu$ and $\rho$. In the literature \cite{davis2018subgradient,davis2019stochastic,charisopoulos2021low}, the first inequality in \eqref{eq:sharp_weak} is referred to as sharpness \cite[Assumption A 2]{davis2018subgradient}, while the second inequality in \eqref{eq:sharp_weak} is referred to as weak convexity \cite[Assumption A 1]{davis2018subgradient}. For example, the absolute value function is sharp and weakly convex.

 In order to prove that $x^*$ is $\mathcal{M}_S$-stable, it suffices to prove the statement in \cref{def:discrete_lyapunov} for all $\epsilon>0$ sufficiently small. We may thus restrict ourselves to the case where $0<\epsilon<2\mu/\rho$. Given such a fixed $\epsilon$, let us choose $\delta := \epsilon$ and $\bar{\alpha} := \min\{ \delta/L , \delta (2\mu-\rho \delta)/L^2 \}$ where $L$ is a Lipschitz constant of $f$ on $B(0,2\epsilon)$. If $\alpha \in (0,\bar{\alpha}]$ and $x_0 \in B(x^*,\delta)$, then 
\begin{subequations}
    \begin{align}
        \|x_1-x^*\|^2 & = \|x_0 -\alpha s_0 - x^*\|^2 ~~~~~(\text{where}~s_0 \in \partial f(x_0))\\
        & = \|x_0-x^*\|^2 - 2\alpha \langle x_0-x^* , s_0 \rangle + \alpha^2\|s_0\|^2 \\
        & \leqslant \|x_0-x^*\|^2 - 2\alpha \left(\mu \|x_0-x^*\|-\frac{\rho}{2} \|x_0-x^*\|^2\right) + \alpha^2 L^2 \\
        & = (1+\alpha \rho)\|x_0-x^*\|^2 - 2\alpha \mu \|x_0-x^*\| + \alpha^2 L^2 \label{eq:convex} \\
        & \leqslant \delta^2.
    \end{align}
\end{subequations}
The last inequality holds because the expression in \eqref{eq:convex} is a convex function of $\|x_0-x^*\|$ and $\alpha$. It is thus bounded above by its evaluations at $(0,0)$, $(0,\bar{\alpha})$, $(\delta,0)$ and $(\delta,\bar{\alpha})$.
By induction, it follows that all the iterates lie in $B(x^*,\delta) = B(x^*,\epsilon)$, which completes the proof of stability. 
\end{example}

We remark that the inequalities \eqref{eq:sharp_weak} may not hold in practice \cite[p. 121]{davis2020stochastic} \cite[2.3.6 Proposition]{clarke1990}, and can be difficult to check \cite[Conjecture 8.7]{charisopoulos2021low}. While we attempt to characterize stability for locally Lipschitz functions, \cref{eg:stable_not_local,eg:local_not_stable} below show that  stability and local optimality are actually decorrelated at such generality.
 This is perhaps not surprising since it is also the case for the gradient trajectories of infinitely differentiable functions \cite[Proposition 2]{absil2006stable}.

\begin{example}[Stability $\centernot\implies$ local optimality]
\label{eg:stable_not_local}
Let $\langle \cdot,\cdot \rangle$ be the Euclidean inner product on $\mathbb{R}$. $0$ is an $\mathcal{M}_S$-stable point of $f:\mathbb{R}^n \rightarrow \mathbb{R}$ defined by
\begin{equation}
   f(x) := \left\{
   \begin{array}{ll}
   x^2\sin(1/x) & \text{if}~ x\neq 0, \\
   0 & \text{else}.
   \end{array}
   \right.
\end{equation}
However, $0$ is not a local minimum of $f$. In Figure \ref{fig:stable_critical}, we apply the subgradient method with constant step size $\alpha = 0.01$ and initialized at $x = -0.01$. One can see that that first 100 iterates remain in a neighborhood of the origin. For a proof of stability, see Section \ref{subsec:Proof of stability in Example}.
\end{example}

\begin{figure}[ht]
\centering
\hspace*{-10mm}
  \includegraphics[width=.6\textwidth]{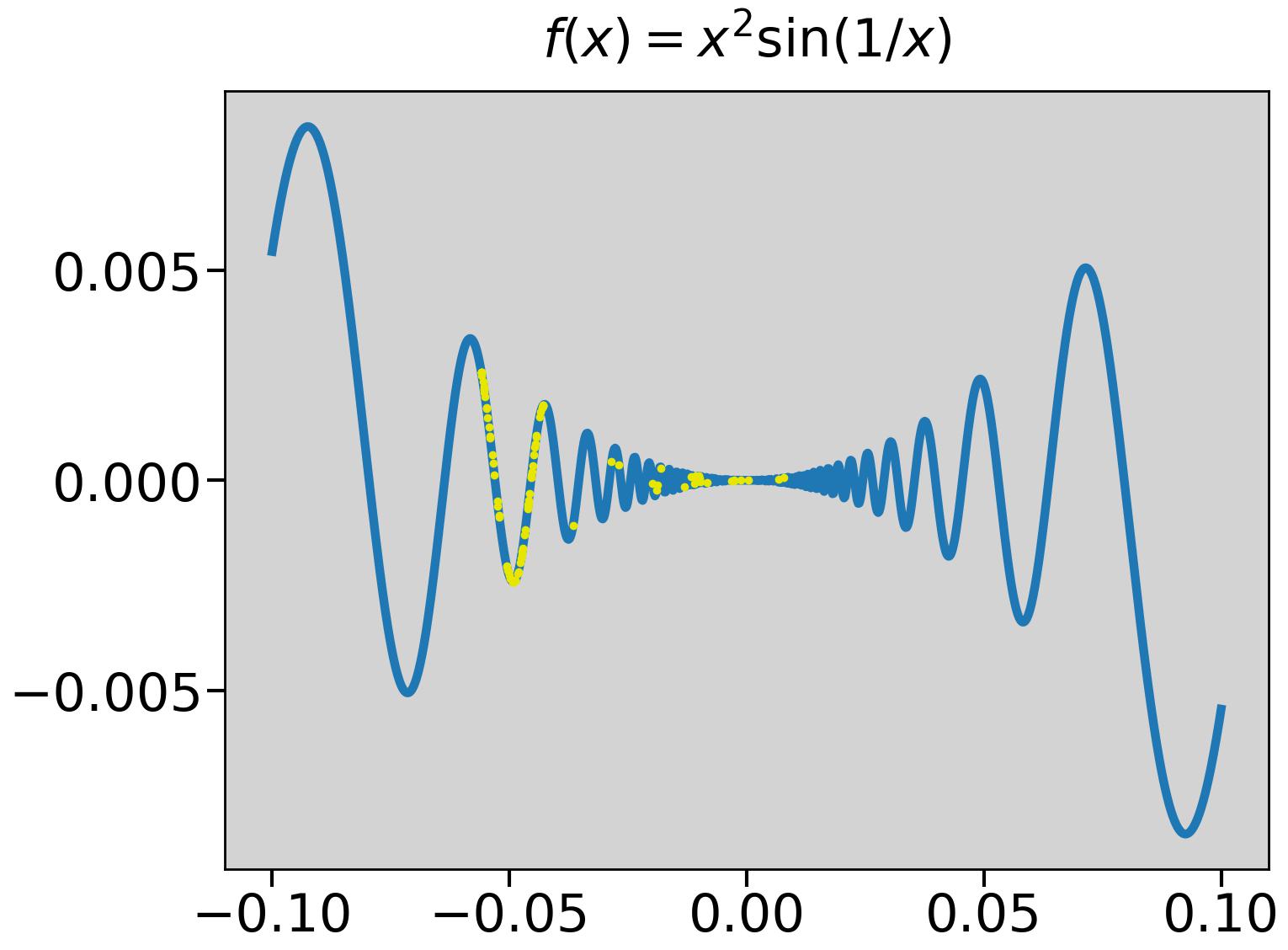}
\caption{Iterates of the subgradient method with constant step size.}
\label{fig:stable_critical}
\end{figure}
 Recall that a point $x^* \in \mathbb{R}^n$ is a strict local minimum of a function $f:\mathbb{R}^n\rightarrow\mathbb{R}$ if there exists a positive constant $\epsilon$ such that $f(x^*) < f(x)$ for all $x \in B(x^*,\epsilon)\setminus \{x^*\}$.
 
\begin{example}[Local optimality $\centernot\implies$ stability]
\label{eg:local_not_stable}
Consider the function $f:\mathbb{R} \rightarrow \mathbb{R}$ due to Rockafellar \cite{rockafellar1981favorable} (see also \cite[Proposition (1.9)]{lebourg1979generic}) defined by 
\begin{equation}
\label{eq:rockafellar}
    f(x) := \int_0^x g(t)dt~~~\text{where}~~~ g(t):= \left\{\begin{array}{rl}
        1 & \text{if}~ t\in A, \\
        -1 & \text{if}~ t \notin A.
    \end{array}\right.
\end{equation}
Above, $A$ is a measurable subset of $\mathbb{R}$ such that the measure of $A\cap I$ and $A\setminus I$ are positive for all non-empty open interval $I$. See \cref{fig:rock} for illustration of such a function. Since the function is nowhere monotonic, the set of local minima is dense in the real line \cite[Lemma 3.3]{wang2005subdifferentiability}. Also, the function is Lipschitz on the real line and $\partial f(x) = [-1,1]$ for all $x\in \mathbb{R}$ according to \cite[Equation (1.12)]{rockafellar1981favorable}. Any local minimum is not $\mathcal{M}_S$-stable since the update rule of the subgradient method with constant step size $\alpha>0$ is given by $x_{k+1} \in x_k + [-\alpha,\alpha]$ for $k = 0,1,2,\hdots$ Furthermore, given a local minimum $x^*$ of $f$, consider $\hat{f}:\mathbb{R} \rightarrow \mathbb{R}$ defined by $\hat{f}(x) := f(x) + |x-x^*|/2$. Clearly, $x^*$ is a strict local minimum of $\hat{f}$. Also, we have that $\partial \hat{f}(x) = [-3/2,1/2]$ for all $x<x^*$, $\partial \hat{f}(x) = [-1/2,3/2]$ for all $x>x^*$ 
due to \cite[p. 39, Corollary 1]{clarke1990}. The strict local minimum $x^*$ is thus not $\mathcal{M}_S$-stable.
\end{example}

\begin{figure}[ht]
\centering
  \includegraphics[width=.6\textwidth]{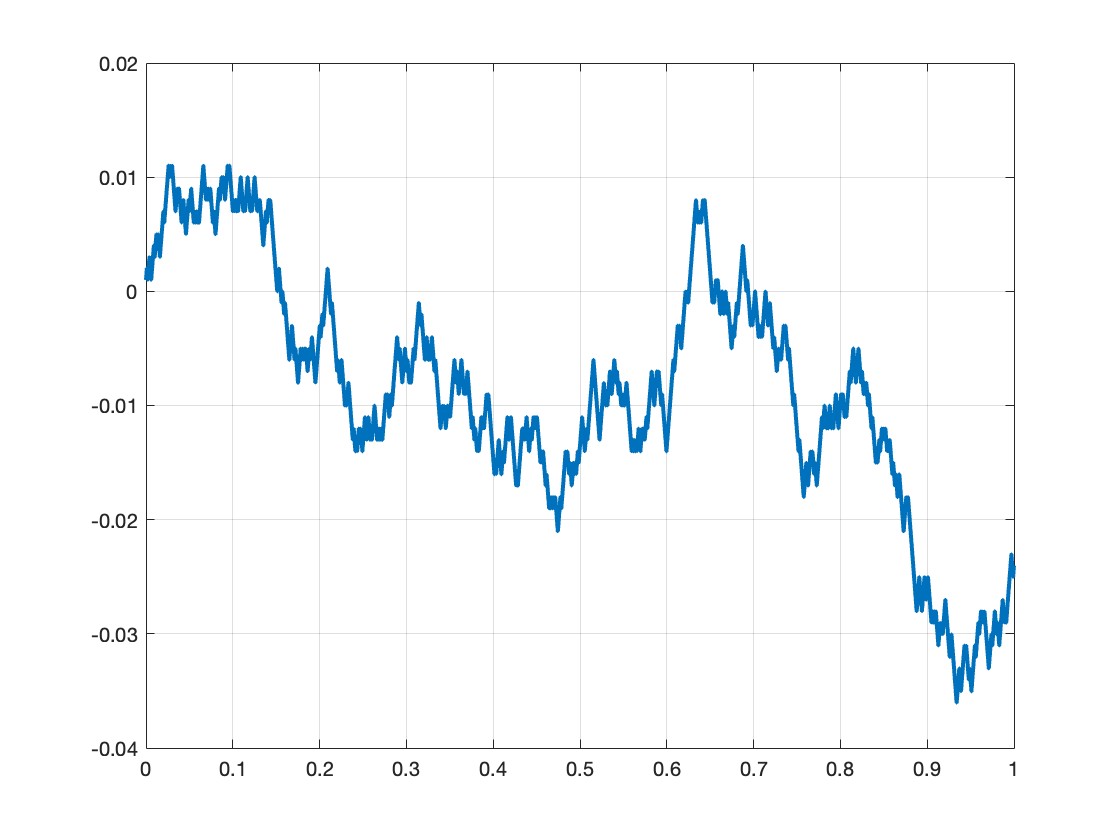}
\caption{Illustration of a Rockafellar function.}
\label{fig:rock}
\end{figure}

The two examples above call for additional structures on the objective function in order to characterize stability of local minima.  We thus confine our investigation to locally Lipschitz tame functions. When $f$ is locally Lipschitz and tame, the set of stable points and local minima coincide in two cases. The first is when $f$ is continuously differentiable with a locally Lipschitz gradient. The fact that local minima are $\mathcal{M}_S$-stable in this regime can be deduced using arguments from \cite[Proposition 3.3]{absil2005convergence}. The converse is a consequence of one of our results (Theorem \ref{thm:necessary}). The second case is that of subgradient trajectories (\cref{def:subg_traj}), where instead of iterates one considers absolutely continuous solutions to the differential inclusion $x' \in -\partial f(x)$. This is a simple generalization of \cite[Theorem 3]{absil2006stable} which holds for real analytic functions. Much of modern numerical optimization however falls outside the scope of these two cases, namely that of smooth objective functions and continuous-time dynamics. It is thus important to determine in the discrete case and when $f$ is merely locally Lipschitz and tame, whether local minima are stable, and conversely, whether stable points are local minima.

As discussed in \cref{ch:approx}, many first-order methods satisfy \cref{def:approx_flow_new}, namely they are approximated by subgradient trajectories of $f$ under mild assumptions. In the remainder of this chapter, we aim to characterize stability for these methods. We say that a point $x^*$ is a stable point of a locally Lipschitz function $f:\mathbb{R}^n\rightarrow \mathbb{R}$ if it is an $\mathcal{M}$-stable point of $f$, for any iterative method $\mathcal{M}$ approximated by subgradient trajectories of $f$. We show that for a point to be stable, it is necessary for it to be a local minimum (\cref{thm:necessary}) and it suffices for it to be a strict local minimum (\cref{thm:sufficient_strict}). 


\begin{theorem}
\label{thm:necessary}
Stable points of locally Lipschitz tame functions are local minima.
\end{theorem}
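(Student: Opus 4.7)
The plan is to argue by contradiction: suppose $x^*$ is stable yet not a local minimum of the locally Lipschitz tame function $f$, and construct a subgradient trajectory whose cluster point is a critical point with value strictly below $f(x^*)$, which tameness will forbid.

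First I would transfer the discrete stability to continuous trajectories. Because the subgradient method is approximated by subgradient trajectories of $f$ (\cref{thm:subg_approx}), the stability assumption on $x^*$ gives, for every $\epsilon>0$, radii $\delta,\bar\alpha>0$ such that every sequence produced by \cref{alg:sg} initialized in $B(x^*,\delta)$ with step size in $(0,\bar\alpha]$ stays in $B(x^*,\epsilon)$. For any fixed $y_0 \in B(x^*,\delta)$, picking iterates $(x_k^m)_{k\in\mathbb{N}}$ with $x_0^m=y_0$ and step sizes $\alpha_m \searrow 0$ produces linear interpolations that are uniformly bounded and equi-Lipschitz (subgradients are bounded on the compact set $B(x^*,\epsilon)$), so the Arzel\`a--Ascoli/Banach--Alaoglu/convergence-theorem argument used in the proof of \cref{prop:bounded}, together with a diagonal extraction over a sequence $T\to\infty$, yields a subgradient trajectory $y:[0,\infty)\to\mathbb{R}^n$ with $y(0)=y_0$ and $y(t)\in B(x^*,\epsilon)$ for every $t\geqslant 0$.

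Next I would exploit tameness to isolate the critical values near $x^*$. For any trajectory $y$ as above, the chain rule (\cref{prop:chain}) gives $\int_0^\infty\|y'(t)\|^2\,dt=f(y_0)-\lim_{t\to\infty}f(y(t))<\infty$, so a subsequence $t_n\to\infty$ satisfies $y'(t_n)\to 0$ and (after further extraction) $y(t_n)\to y^*\in B(x^*,\epsilon)$; upper semicontinuity of $\partial f$ (\cref{prop:clarke}) then forces $0\in\partial f(y^*)$, with $f(y^*)\leqslant f(y_0)$. Applying this with $y_0=x^*$ and $\epsilon\to 0$ shows that critical points accumulate at $x^*$, hence $x^*$ itself is critical. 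By \cref{thm:kl}, in a fixed ball $B(x^*,R)$ the critical values of $f$ form a finite set $v_1<\cdots<v_k$, and each level set $C_j:=\{y\in B(x^*,R): 0\in\partial f(y),\, f(y)=v_j\}$ is closed because $\partial f$ is upper semicontinuous with closed values and $f$ is continuous. Writing $v_{j_0}=f(x^*)$, one has $x^*\notin C_j$ and thus $d(x^*,C_j)>0$ for every $j\neq j_0$, so shrinking $\epsilon$ below $\min_{j\neq j_0}d(x^*,C_j)$ forces every critical point of $f$ in $B(x^*,\epsilon)$ to have value exactly $f(x^*)$.

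Finally I would derive the contradiction. Fix $\epsilon$ as small as above and let $\delta$ be the corresponding stability radius. Because $x^*$ is not a local minimum and $f$ is continuous, there is a point $y_0 \in B(x^*,\delta)$ with $f(y_0)<f(x^*)$. Feeding this $y_0$ into the construction of the first paragraph and the cluster-point argument of the second produces a critical point $y^* \in B(x^*,\epsilon)$ with $f(y^*)\leqslant f(y_0)<f(x^*)$, in direct contradiction with the conclusion that every critical point of $f$ in $B(x^*,\epsilon)$ has value exactly $f(x^*)$. The main obstacle is that \cref{def:approx_flow_new} only furnishes \emph{some} (and not every) subgradient trajectory that shadows a given sequence of iterates, so the continuous-time stability inherited from the discrete one is one-sided; tameness, through the finiteness of critical values, is what allows this one-sided information to rule out the existence of a nearby critical point at a strictly lower level and thereby deliver the contradiction.
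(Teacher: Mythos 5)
Your proof is correct, but it takes a genuinely different route from the paper's. The paper's argument is quantitative: it fixes an arbitrary method $\mathcal{M}$ satisfying \cref{def:approx_flow_new}, produces a trajectory $x(\cdot)$ on $[0,T]$ starting at a point $x_0$ with $f(x_0)<f(x^*)$, and uses the Kurdyka--\L{}ojasiewicz inequality (\cref{thm:kl}) together with concavity of the desingularizing $\psi$ to obtain a uniform lower bound on $d(0,\partial f(x(t)))$; the chain rule then shows $f(x(T))$ drops below $\min_{B(x^*,2\epsilon)}f$ in finite time $T$, so $x(T)\notin B(x^*,2\epsilon)$ and the nearby iterate $x_{\lfloor T/\alpha\rfloor}$ exits $B(x^*,\epsilon)$, contradicting stability directly. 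Your argument is qualitative: you build a single subgradient trajectory on $[0,\infty)$ confined to $B(x^*,\epsilon)$ by a diagonal Arzel\`a--Ascoli extraction, observe that $\int_0^\infty\|y'\|^2<\infty$ forces a cluster point $y^*$ with $0\in\partial f(y^*)$ and $f(y^*)\leqslant f(y_0)<f(x^*)$, and then rule out such a $y^*$ using the finiteness of nearby critical values (a consequence of \cref{thm:kl} that the paper records). Both proofs rely on tameness through \cref{thm:kl}, but yours exploits only the topological corollary (locally finitely many critical values and closedness of critical level sets), not the explicit $\psi'$-estimates, which makes it conceptually lighter.

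One caveat worth flagging: you reduce to the subgradient method $\mathcal{M}_S$ by invoking \cref{thm:subg_approx}, which asserts that $\mathcal{M}_S$ satisfies \cref{def:approx_flow_new} only when $f$ is additionally \emph{lower bounded}. The paper's proof sidesteps this by working with an arbitrary $\mathcal{M}$ from \cref{def:approx_flow_new}, so it does not need to know that $\mathcal{M}_S$ belongs to that class. If $f$ is not lower bounded the definition of a ``stable point'' may, strictly speaking, not let you deduce $\mathcal{M}_S$-stability. This is repairable without invoking \cref{thm:subg_approx}: for an arbitrary $\mathcal{M}$ satisfying \cref{def:approx_flow_new} with scaling $c>0$, for each horizon $T$ one obtains a trajectory $x^T$ of $cf$ on $[0,T]$ that stays in a slightly inflated ball; the integral bound $\int_0^T d(0,\partial f(x^T(t)))^2\,dt\leqslant (f(x_0)-\min_{B(x^*,2\epsilon)}f)/c$ forces a time $t^T\in[0,T]$ with $d(0,\partial f(x^T(t^T)))\leqslant O(T^{-1/2})$ and $f(x^T(t^T))\leqslant f(x_0)<f(x^*)$; extracting a cluster point of $x^T(t^T)$ as $T\to\infty$ yields the same contradictory critical point, without ever constructing a trajectory on all of $[0,\infty)$. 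With that adjustment your argument matches the paper's level of generality.
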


\begin{proof}
Let $\mathcal{M}$ be an iterative method approximated by subgradient trajectories of $f$ (\cref{def:approx_flow_new}) with $c>0$. We in fact show that if 
 $x^* \in \mathbb{R}^n$ is  $\mathcal{M}$-stable point of a locally Lipschitz tame function $f:\mathbb{R}^n\rightarrow\mathbb{R}$, then it is a local minimum of $f$. We reason by contradiction and assume that $x^*$ is not a local minimum of $f$. According to the Kurdyka-\L{}ojasiewicz inequality (\cref{thm:kl}), there exist $r,\rho>0$ and a strictly increasing concave continuous definable function $\psi:[0,\rho) \rightarrow [0,\infty)$ that is continuously differentiable on $(0,\rho)$ with $\psi(0) = 0$ such that $d(0,\partial f(x)) \geqslant 1/\psi'(|f(x) - f(x^*)|)$ for all $x \in B(x^*,r)$ whenever $0<|f(x)-f(x^*)|< \rho$. After possibly reducing $r$, the inequality holds for all $x \in B(x^*,r)$ such that $f(x) \neq f(x^*)$.

Let $\epsilon \in (0,r/2)$ and $L>0$ be a Lipschitz constant of $f$ on $B(x^*,2r)$. By the definition of stability (\cref{def:discrete_lyapunov}), there exist $\delta>0$ and $\bar{\alpha} \in (0,\min\{1,r/(4cL)\}]$ such that $\{x_k\}_{k\in \mathbb{N}}\subset B(x^*,\epsilon)$ for any $(x_k)_{k\in \mathbb{N}} \in \mathcal{M}(f,(0,\bar{\alpha}],B(x^*,\delta))$. Since $x^*$ is not a local minimum, there exists $x_0 \in B(x^*,\delta)$ such that $f(x_0)<f(x^*)$. Let $T_0:= (f(x_0) - \min_{B(x^*,2\epsilon)} f)\psi'(f(x^*) - f(x_0))^2/c\geqslant 0$ and $T:= T_0 + 1$. Let $(x_k)_{k\in \mathbb{N}} \in \mathcal{M}(f,(0,\bar{\alpha}],x_0)$. By \cref{def:approx_flow_new}, there exists an absolutely continuous function $x:[0,T]\rightarrow \mathbb{R}^n$ such that
\begin{equation*}
    x'(t) \in -c\partial f(x(t)),~~~\text{for almost every}~t\in [0,T],~~~x(0) = x_0,
\end{equation*}
and $\|x_{k}-x(k\alpha)\|\leqslant \epsilon/2$ for $k=0,\hdots,\lfloor T/\alpha\rfloor$, after possibly reducing $\bar{\alpha}$. It follows that $\|x(k\alpha) - x^*\| \leqslant \|x(k\alpha) - x_k\|+ \|x_k - x^*\| \leqslant \epsilon/2 + \epsilon = 3\epsilon/2 < 3r/4$ for $k=0,\hdots,\lfloor T/\alpha\rfloor$. Thus, $x([0,T]) \subset B(x^*,r)$. Indeed, assume the contrary that $x(\tilde{t}) \not\in B(x^*,r)$ for some $\tilde{t}\in (k\alpha,\min\{(k+1)\alpha,T\})$ and $k$. Then by the continuity of $x(\cdot)$, there exists $t' \in (k\alpha,\tilde{t})$ such that $x([k\alpha,t']) \subset B(x^*,r)$ and $\|x(t') - x^*\| = r$. Therefore, $ r/4 =  r - 3r/4< \|x(t') - x^*\|- \|x(k\alpha) - x^*\| \leqslant \|x(t') - x(k\alpha)\| \leqslant \int_{k\alpha}^{t'}\|x'(t)\| dt \leqslant cL(t' - k\alpha) \leqslant cL \alpha \leqslant r/4$, where the last inequality follows from the fact that $\alpha \leqslant \bar{\alpha} \leqslant r/(4cL)$. Contradiction occurs.

 Since $f$ is locally Lipschitz and tame, by the chain rule of subgradient trajectories (\cref{prop:chain}), it holds that
\begin{equation}
\label{eq:lyapunov}
    f(x(t)) - f(x(0)) = - \int_0^t cd(0,\partial f(x(\tau)))^2 d\tau, ~~~ \forall t \in [0,T].
\end{equation}
Therefore, $ f(x(\alpha \lfloor T/\alpha \rfloor)) - f(x(0)) = - \int_0^{\alpha \lfloor T/\alpha \rfloor} cd(0,\partial f(x(\tau)))^2 d\tau \leqslant - \int_0^{\alpha \lfloor T/\alpha \rfloor} c/\psi'(f(x^*) - f(x(\tau)))^2 d\tau \leqslant -\alpha \lfloor T/\alpha \rfloor c/\psi'(f(x^*) - f(x(0)))^2 < -T_0 c/\psi'(f(x^*) - f(x(0)))^2 = \min_{B(x^*,2\epsilon)} f - f(x(0))$. Above, the first inequality is due to the Kurdyka-\L{}ojasiewicz inequality, and the second inequality follows from \eqref{eq:lyapunov} and the concavity of $\psi$. Thus, $x(\alpha \lfloor T/\alpha \rfloor) \not\in B(x^*,2\epsilon)$ and $\|x_{\lfloor T/\alpha \rfloor} - x^*\| \geqslant \|x(\alpha \lfloor T/\alpha \rfloor) - x^*\| - \|x(\alpha \lfloor T/\alpha \rfloor) - x_{\lfloor T/\alpha \rfloor}\| > 2\epsilon - \epsilon/2 > \epsilon$, which contradicts with the stability of $x^*$.
\end{proof}

We also obtain the following sufficient condition for stability.

\begin{theorem}
\label{thm:sufficient_strict}
Strict local minima of locally Lipschitz tame functions are stable.
\end{theorem}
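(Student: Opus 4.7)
The plan is to combine the strict local minimum hypothesis with the KL inequality (\cref{thm:kl}) and the definable Morse--Sard property to show that subgradient trajectories starting near $x^*$ converge \emph{uniformly} to $x^*$, then propagate this control to the iterates via \cref{def:approx_flow_new}. Let $\mathcal{M}$ be approximated by subgradient trajectories with scaling $c>0$, and let $\epsilon>0$. First I would shrink $\epsilon$ so that $B(x^*,\epsilon)$ lies in the KL neighborhood of $x^*$, $f(x)>f(x^*)$ on $B(x^*,\epsilon)\setminus\{x^*\}$, and, using the fact that locally Lipschitz tame functions have only finitely many critical values in any bounded set, so that $f(x^*)$ is the only critical value of $f$ in some interval $[f(x^*),f(x^*)+\eta_0]$. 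By continuity and compactness, pick $\eta\in(0,\eta_0]$ with $f(x)\geqslant f(x^*)+\eta$ whenever $\|x-x^*\|\in[\epsilon/2,\epsilon]$, and $\delta_1\in(0,\epsilon/2)$ with $\sup_{B(x^*,\delta_1)}f\leqslant f(x^*)+\eta/4$.

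Next I would establish the key lemma: every subgradient trajectory of $f$ with scaling $c$ starting in $\bar{B}(x^*,\delta_1)$ stays in $B(x^*,\epsilon/2)$ and converges to $x^*$. Containment follows from the monotonicity of $f\circ x$ (\cref{prop:chain}): along the trajectory $f\leqslant f(x^*)+\eta/4<f(x^*)+\eta$, so by continuity the trajectory cannot cross the sphere of radius $\epsilon/2$, on which $f\geqslant f(x^*)+\eta$. The KL inequality yields the finite length bound $\int_0^\infty\|x'(t)\|\,dt\leqslant c\psi(f(x(0))-f(x^*))$, so the trajectory converges to some critical point $x_\infty\in\bar{B}(x^*,\epsilon/2)$ with $f(x_\infty)\leqslant f(x^*)+\eta/4<f(x^*)+\eta_0$. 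Isolation of $f(x^*)$ among nearby critical values forces $f(x_\infty)=f(x^*)$, and the strict local minimum property then forces $x_\infty=x^*$. Moreover, combining the differential inequality $(f\circ x)'(t)\leqslant -c/\psi'(f(x(t))-f(x^*))^2$ (from \cref{prop:chain} and KL) with the post-time length bound $\|x(t)-x^*\|\leqslant c\psi(f(x(t))-f(x^*))$, I would extract a rate depending only on $\psi$ and the initial range $[0,\eta/4]$ of $f$-values, yielding \emph{uniform convergence}: for every $\nu>0$ there exists $T_\nu>0$ with $\|x(t)-x^*\|\leqslant\nu$ for all $t\geqslant T_\nu$, uniformly over trajectories with $x(0)\in\bar{B}(x^*,\delta_1)$.

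With uniform convergence in hand, the transfer to iterates proceeds by induction on windows of length $T$. Invoke \cref{def:approx_flow_new} with $X_0:=\bar{B}(x^*,\delta_1)$, $X_1:=\bar{B}(x^*,\epsilon)$, horizon $T:=T_{\delta_1/4}$, and tolerance $\delta_1/4$, to obtain $\bar{\alpha}$, and set $\delta:=\delta_1/2$. For $x_0\in B(x^*,\delta)\subset X_0$, the approximating trajectory $x^{(0)}(\cdot)$ for the first window starts in $\bar{B}(x^*,\delta_1)$, stays in $B(x^*,\epsilon/2)$ by the lemma, and satisfies $x^{(0)}(T)\in B(x^*,\delta_1/4)$ by uniform convergence. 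Hence $x_0,\dots,x_{\lfloor T/\alpha\rfloor}$ all lie in $B(x^*,\epsilon/2+\delta_1/4)\subseteq B(x^*,\epsilon)$, and $x_{\lfloor T/\alpha\rfloor}\in B(x^*,\delta_1/2)\subset X_0$. The restart condition being met, \cref{def:approx_flow_new} can be reapplied with $k_0=\lfloor T/\alpha\rfloor$ (with $x_0,\dots,x_{k_0}\in X_1$), yielding the same conclusion for the next window; by induction the same holds for every subsequent window, so $x_k\in B(x^*,\epsilon)$ for all $k\in\mathbb{N}$, giving the desired stability.

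The hardest step will be the uniform convergence lemma: although the existence of the limit $x^*$ for each trajectory follows cleanly from KL and the strict local minimum property, extracting a \emph{uniform} rate is delicate because the desingularizing function $\psi$ may have $\psi'(s)\to\infty$ as $s\to 0^+$. A careful analysis of the KL differential inequality is needed, and the definable Morse--Sard property plays an essential role by ruling out ``slow'' trajectories that plateau at critical points with $f$-value slightly above $f(x^*)$. Without this uniformity, approximation errors would accumulate across restarts and the induction would break down.
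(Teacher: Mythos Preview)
Your approach is sound and genuinely different from the paper's. The paper never shows that subgradient trajectories converge to $x^*$; instead it invokes the \L{}ojasiewicz inequality (\cref{thm:l}) to obtain a growth function $\sigma$ with $f(x)-f(x^*)\geqslant\sigma(\|x-x^*\|)$ and uses the connected component $[f\leqslant f(x^*)+\sigma(\epsilon/2)]_{x^*}\subset B(x^*,\epsilon/2)$ of a sublevel set as the invariant for the induction. The restart step then needs only a \emph{finite-time} KL decrease estimate (either $f(x(K\alpha))-f(x^*)\leqslant\sigma(\epsilon/2)/2$ already, or else the drop over the window is at least $c\xi^2 T/2$ with $\xi=1/\psi'(\sigma(\epsilon/2)/2)$), which suffices to place the ball $B(x(K\alpha),\epsilon')$ back inside the sublevel set; the horizon is taken simply as $T=\epsilon/(3L)$, independent of $\psi$. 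Your route---proving uniform convergence of trajectories to $x^*$ via a comparison argument for the KL differential inequality $g'\leqslant-c/\psi'(g)^2$, then using a small ball as the invariant set---is conceptually cleaner (``trajectories converge, so iterates stay close'') and dispenses with \cref{thm:l}, replacing it by the elementary compactness step that produces $\eta$; the price is that the horizon $T_{\delta_1/4}$ now depends on $\psi$ and requires the full comparison argument you flag as hardest. Two minor corrections: the Morse--Sard isolation of critical values is in fact inessential for uniformity, since the inequality $g'\leqslant-c/\psi'(g)^2$ with $g(0)\leqslant\eta/4$ already yields a uniform envelope $g(t)\leqslant h(t)\to0$ by comparison, regardless of other critical values; and you should take the horizon strictly larger than $T_{\delta_1/4}$ (and $\bar\alpha$ accordingly small), since $\lfloor T/\alpha\rfloor\alpha$ may fall short of $T$.
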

\begin{proof}
Let $\mathcal{M}$ be an iterative method approximated by subgradient trajectories of  a locally Lipschitz tame function $f:\mathbb{R}^n\rightarrow\mathbb{R}$ (\cref{def:approx_flow_new}) with $c>0$. Let $x^*\in \mathbb{R}^n$ be a strict local minimum of $f$. By the \L{}ojasiewicz inequality (\cref{thm:l}) and the Kurdyka-\L{}ojasiewicz inequality (\cref{thm:kl}), there exist $r, \rho>0$ and strictly increasing continuous definable functions $\sigma,\psi:[0,\rho)\rightarrow[0,\infty)$ that are continuously differentiable on $(0,\rho)$ with $\sigma(0) = \psi(0) = 0$ such that $\psi$ is concave, $f(x)-f(x^*) \geqslant  \sigma(\|x-x^*\|)$, and $d(0,\partial f(x)) \geqslant 1/\psi'(f(x) - f(x^*))$ for all $x \in B(x^*,r)$ whenever $0<f(x)-f(x^*)< \rho$. After possibly reducing $r$, the two inequalities above hold for all $x \in B(x^*,r)\setminus \{x^*\}$. In order to prove stability, it suffices to prove the statement in \cref{def:discrete_lyapunov} for all $\epsilon>0$ sufficiently small. We may thus restrict ourselves to the case where $0<\epsilon < r$. Given such a fixed $\epsilon$, we next describe a possible choice for $\delta$. 

Given $\Delta \geqslant f(x^*)$, let $[f\leqslant \Delta]_{x^*}$ denote the connected component of the sublevel set $[f\leqslant \Delta] := \{ x \in \mathbb{R}^n : f(x) \leqslant \Delta \}$ containing $x^*$. By taking $\Delta_\epsilon:= f(x^*) + \sigma(\epsilon/2)$, we find that $[f\leqslant \Delta_\epsilon]_{x^*}$ is contained in $B(x^*,\epsilon/2)$. Indeed, one can reason by contradiction and assume that there exists $x \in [f\leqslant \Delta_\epsilon]_{x^*} \setminus B(x^*,\epsilon/2)$. Then $x\notin B(x^*,r)$, otherwise $\sigma(\|x-x^*\|) \leqslant f(x)-f(x^*) \leqslant \sigma(\epsilon/2)$ and thus $\|x-x^*\| \leqslant \epsilon/2$. Therefore $[f\leqslant \Delta_\epsilon]_{x^*}$ is the disjoint union of $[f\leqslant \Delta_\epsilon]_{x^*} \cap \mathring{B}(x^*,r)$ and $[f\leqslant \Delta_\epsilon]_{x^*} \setminus B(x^*,r)$, both of which are nonempty and open in $[f\leqslant \Delta_\epsilon]_{x^*}$. This contradicts the connectedness of $[f\leqslant \Delta_\epsilon]_{x^*}$, which yields that $[f\leqslant \Delta_\epsilon]_{x^*} \subset B(x^*,\epsilon/2)$. By continuity of $f$ we may choose $\delta>0$ such that 

\begin{equation}
\label{eq:inclusions}
    B(x^*,\delta) ~ \subset ~ [f\leqslant \Delta_\epsilon]_{x^*} ~ \subset ~ B(x^*,\epsilon/2).
\end{equation}

We next describe a possible choice for $\bar{\alpha}$. Let $L>\sup\{\|s\|:s\in \partial f(x), x\in B(x^*,\epsilon)\}$ be a Lipschitz constant of $f$ on $B(x^*,\epsilon)$ and let $T := \epsilon/(3L)$, where the supremum is finite due to \cref{prop:clarke} and \cite[Proposition 3 p. 42]{aubin1984differential}. Since $f$ is continuous, the set of initial values $[f\leqslant \Delta_\epsilon]_{x^*}$ is closed. By virtue of the second inclusion in \eqref{eq:inclusions}, $[f\leqslant \Delta_\epsilon]_{x^*}$ is in fact a compact set. Let $\epsilon' := \min\{\epsilon L,\sigma(\epsilon/2),c\xi^2 T\}/(2L)>0$ where $\xi:= 1/\psi'(\sigma(\epsilon/2)/2)>0$.
Applying \cref{def:approx_flow_new}, there exists $\bar{\alpha} \in (0,T/2]$ such that, for any $(x_k)_{k\in\mathbb{N}} \in \mathcal{M}(f,(0,\bar{\alpha}],[f\leqslant \Delta_\epsilon]_{x^*})$, there exists a solution to 
\begin{equation}
\label{eq:ivp:levelset}
     x'(t) \in - c\partial f(x(t)), ~~~ \text{for a.e.}~ 
     t \in (0,T), ~~~ x(0) \in [f\leqslant \Delta_\epsilon]_{x^*}.
\end{equation}
 for which  it holds that
\begin{equation}
\label{eq:uniform_bound_iterates}
    \|x_k - x(k\alpha)\| \leqslant \epsilon',~~~ k = 0,\hdots,\lfloor T/\alpha \rfloor.
\end{equation}

Having chosen $\delta$ and $\bar{\alpha}$, let us fix some $\alpha \in (0,\bar{\alpha}]$ from now on. Consider a sequence $(x_k)_{k\in\mathbb{N}}$ generated by the subgradient method with constant step size $\alpha$ and initialized in $B(x^*,\delta)$. Our goal is to show that all the iterates lie in $B(x^*,\epsilon)$. We first show that $x_0,\hdots,x_K$ lie in $B(x^*,\epsilon)$ where $K:=\lfloor T/\alpha \rfloor$. Since $f$ is locally Lipschitz and tame, by the chain rule of subgradient trajectories (\cref{prop:chain}), it holds that
\begin{equation}
\label{eq:dist}
    f(x(t)) - f(x(0)) = - \int_0^t c d(0,\partial f(x(\tau)))^2 d\tau, ~~~ \forall t \in [0,T].
\end{equation}
As a result, $f(x(t)) \leqslant f(x(0)) \leqslant \Delta_\epsilon$ for all $t \in [0,T]$. We thus have that $x(0) \in [f\leqslant \Delta_\epsilon]_{x^*}\cap x([0,T])$, both of which are connected subsets of the sublevel set $[f\leqslant \Delta_\epsilon]$. By maximality of $[f\leqslant \Delta_\epsilon]_{x^*}$, it follows that $x([0,T]) \subset [f\leqslant \Delta_\epsilon]_{x^*} \subset B(x^*,\epsilon/2)$. Therefore, $\|x_k - x^*\| \leqslant \|x_k - x(k\alpha)\| + \|x(k\alpha) - x^*\| \leqslant \epsilon' + \epsilon/2 \leqslant \epsilon$ for $k = 0, \ldots,K$.

 In order to show that the ensuing iterates also lie in $B(x^*,\epsilon)$, we will show that $x_K \in [f\leqslant \Delta_\epsilon]_{x^*}$. The same argument used previously then yields that $x_{K+1},\hdots,x_{2K} \in B(x^*,\epsilon)$. Since $K = \lfloor T/\alpha \rfloor \geqslant \lfloor T/\bar{\alpha}  \rfloor \geqslant 2$, we may conclude by induction that all the iterates belong to $B(x^*,\epsilon)$. This is illustrated in Figure \ref{fig:inclusions}. 

\begin{figure}[ht]
\centering
    \begin{tikzpicture}[scale=2.4]
    \draw[blue,opacity=0.2] (4.4,0.4) -- (5.5,.508);
    \fill[gray!50] (5.5,.9) circle (38pt);
    \draw[line width=.2mm] (5.5,.9) circle (38pt);
    \draw[line width=.2mm, name path=A] plot [smooth,tension=0.689] coordinates {(4.4,0.4) (5.1,0.1) (5.8,0.48) (6.4967,0.5682)};
    \draw[line width=.2mm, name path=B] plot [smooth,tension=0.689] coordinates {(6.4967,0.5682)(6.5829,1.0505)(6.0727,1.4115) (5.3159,1.4086) (4.6,1) (4.4,0.4)};
    \tikzfillbetween[of=A and B]{blue, opacity=0.2};
    \fill (4.87,0.79)  node {\normalsize $B(x^*,\delta)$};
    \fill (5.9,1.57)  node {\normalsize $[f\leqslant \Delta_\epsilon]_{x^*}$};
    \draw[line width=.2mm] (5.5,.9) circle (10pt);
    \fill (4.6,2.2)  node {\normalsize $B(x^*,\epsilon/2)$};
    \filldraw (5.5,.9) circle (.2pt);
    \fill (5.61,.92)  node {\normalsize $x^*$};
    \draw[magenta,line width=.3mm] plot [smooth, tension=0.689] coordinates {(6.55,.95) (6.4,.85) (6.2918,.639)};
    \draw[magenta,line width=.3mm] plot [smooth, tension=0.689] coordinates {(6.3,.64) (6.1,.69) (5.85,.7)  (5.6,.6) (5.4,.64)};
    \draw[dashed,line width=.2mm] plot [smooth, tension=0.689] coordinates {(6.6,.8) (6.49,.69) (6.42,.44)};
    \draw[dashed,line width=.2mm] plot [smooth, tension=0.689] coordinates {(6.42,.44) (6.1,.51) (5.85,.52)  (5.6,.42) (5.4,.44)};
    \draw[dashed,line width=.2mm] plot [smooth, tension=0.689] coordinates {(6.5,1.1) (6.32,1) (6.2,.84)};
    \draw[dashed,line width=.2mm] plot [smooth, tension=0.689] coordinates {(6.2,.84) (6.1,.87) (5.85,.88)  (5.6,.78) (5.4,.82)};
    \filldraw[yellow] (6.5,.87) circle (.5pt);
    \filldraw[yellow] (6.33,.84) circle (.5pt);
    \filldraw[yellow] (6.37,.667) circle (.5pt);
    \filldraw[yellow] (6.38,.5) circle (.5pt);
    \filldraw[yellow] (6.23,.52) circle (.5pt);
    \filldraw[yellow] (6.06,.62) circle (.5pt);
    \filldraw[yellow] (5.88,.74) circle (.5pt);
    \filldraw[yellow] (5.65,.68) circle (.5pt);
    \filldraw[yellow] (5.5,.51) circle (.5pt);
    \fill (7.03,.803)  node {\normalsize $x_K$};
    \draw[->] (6.9,.82) -- (6.54,.864);
    \fill (5.0,.385)  node {\normalsize $x_{2K}$};
    \draw[->] (5.15,.405) -- (5.45,.493);
    \end{tikzpicture}
\caption{Induction step}
\label{fig:inclusions}
\end{figure}
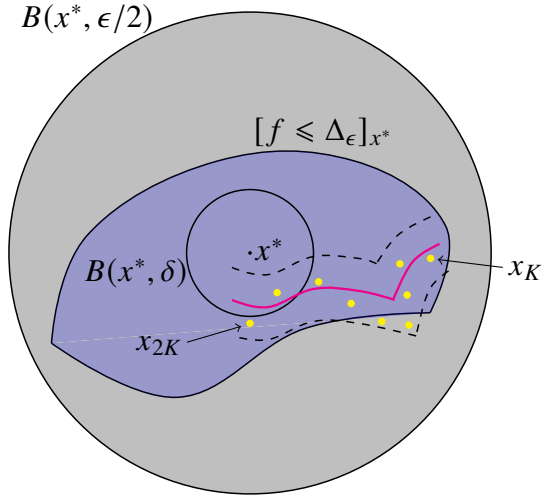
For the remainder of the proof, we seek to show that $x_K \in [f\leqslant \Delta_\epsilon]_{x^*}$. In order to do so, we prove that  $B(x(K\alpha),\epsilon')$ is a connected subset of $[f\leqslant \Delta_\epsilon]$ that has nonempty intersection with $[f\leqslant \Delta_\epsilon]_{x^*}$. Since $[f\leqslant \Delta_\epsilon]_{x^*}$ is a connected component of $[f\leqslant \Delta_\epsilon]$, by maximality and \eqref{eq:uniform_bound_iterates} we then have $x_K \in B(x(K\alpha),\epsilon') \subset [f\leqslant \Delta_\epsilon]_{x^*}$.
%
For all $\tilde{x} \in B(x(K\alpha),\epsilon')$, we have
\begin{subequations}
    \begin{align}
        	f(\tilde{x}) - f(x^*) & = f(\tilde{x}) - f(x(K\alpha)) + f(x(K\alpha)) - f(x^*) \label{level_a} \\
	        & \leqslant L\|\tilde{x} - x(K\alpha)\| + \max\{\sigma(\epsilon/2)/2,\sigma(\epsilon/2) - c\xi^2 T/2\} \label{level_b} \\
	        & \leqslant L\epsilon' + \sigma(\epsilon/2) - \min\{\sigma(\epsilon/2)/2,c\xi^2 T/2\} \label{level_c} \\
            & \leqslant \sigma(\epsilon/2). \label{level_d} 
    \end{align}
\end{subequations}
Indeed, $\tilde{x}$ and $x(K\alpha)$ belong to $B(x^*,\epsilon)$ so we may invoke the Lipschitz constant $L$ of $f$ on $B(x^*,\epsilon)$ in order to bound the first term in \eqref{level_a}. Recall that $x(K\alpha)\in [f\leqslant \Delta_\epsilon]_{x^*} \subset B(x^*,\epsilon/2)$ and, since $\epsilon' \leqslant \epsilon/2$, we have $\tilde{x} \in B(x(K\alpha),\epsilon') \subset B(x^*,\epsilon)$. As for the second term in \eqref{level_a}, if it is greater than or equal to $\sigma(\epsilon/2)/2$, then for all $t \in [0,K\alpha]$, we have $\sigma(\epsilon/2)/2 \leqslant  f(x(K\alpha)) - f(x^*) \leqslant  f(x(t)) - f(x^*)$ and thus $d(0,\partial f(x(t))) \geqslant 1/\psi'(f(x(t)) - f(x^*)) \geqslant 1/\psi'(\sigma(\epsilon/2)/2) = \xi$. By \eqref{eq:dist}, it follows that $f(x(K\alpha)) - f(x^*) \leqslant f(x(0)) - f(x^*) - \int_{0}^{K\alpha} c\xi^2 d\tau \leqslant f(x(0)) - f(x^*) - K\alpha c\xi^2 \leqslant \sigma(\epsilon/2) - c\xi^2 T/2$. The last inequality is due to the fact that $x(0) \in [f\leqslant \Delta_\epsilon]_{x^*}$ and $K\alpha = \lfloor T/\alpha\rfloor \alpha \geqslant T-\alpha \geqslant T-\bar{\alpha} \geqslant T/2$. In \eqref{level_c}, we use the fact that $\tilde{x} \in B(x(K\alpha),\epsilon')$ and rewrite the maximum into a minimum. Finally, \eqref{level_d} holds because $\epsilon' \leqslant \min\{\sigma(\epsilon/2),c\xi^2 T\}/(2L)$.
\end{proof}
We next illustrate \cref{thm:sufficient_strict} with the following example.

\begin{example}[A strict local minimum]
\label{eg:strict_local}
Consider the locally Lipschitz tame function defined from $\mathbb{R}^2$ to $\mathbb{R}$ by
\begin{equation}
    f(x_1,x_2):=\max\{-18x_1^2 + 12|x_2|,6x_1^2+3|x_2|\}.
\end{equation}
$(0,0)$ is a strict local minimum of $f$ because $f(x_1,x_2) \geqslant 6x_1^2+3|x_2| \geqslant 0$, with equality if and only if $x_1=x_2=0$. Thus, Theorem \ref{thm:sufficient_strict} can be applied and it guarantees the stability of $(0,0)$. See \cref{fig:strict} for a realization of the subgradient method. Note that $f$ is not sharp around $(0,0)$, and a slight modification (see \cref{subsec:modification}) of it yields a locally Lipschitz tame function with a strict local minimum that is neither sharp nor weakly convex. Recall that in order to show $\mathcal{M}_S$-stability for sharp and weakly convex functions in Example \ref{eg:sharpness}, it suffices to choose $\delta = \epsilon$ in \cref{def:discrete_lyapunov}. In contrast, one must choose $\delta < \epsilon$ in order to show stability in the current example. Indeed, no matter how small the step size is, there does not exist a ball centered at the origin that contain all iterates of the subgradient method that are initialized in the ball (see \cref{fig:goout} for an illustration and Section \ref{subsec:Proof of in Example} for a proof).
\end{example}
\begin{figure}
	\centering
	\includegraphics[width=.6\textwidth]{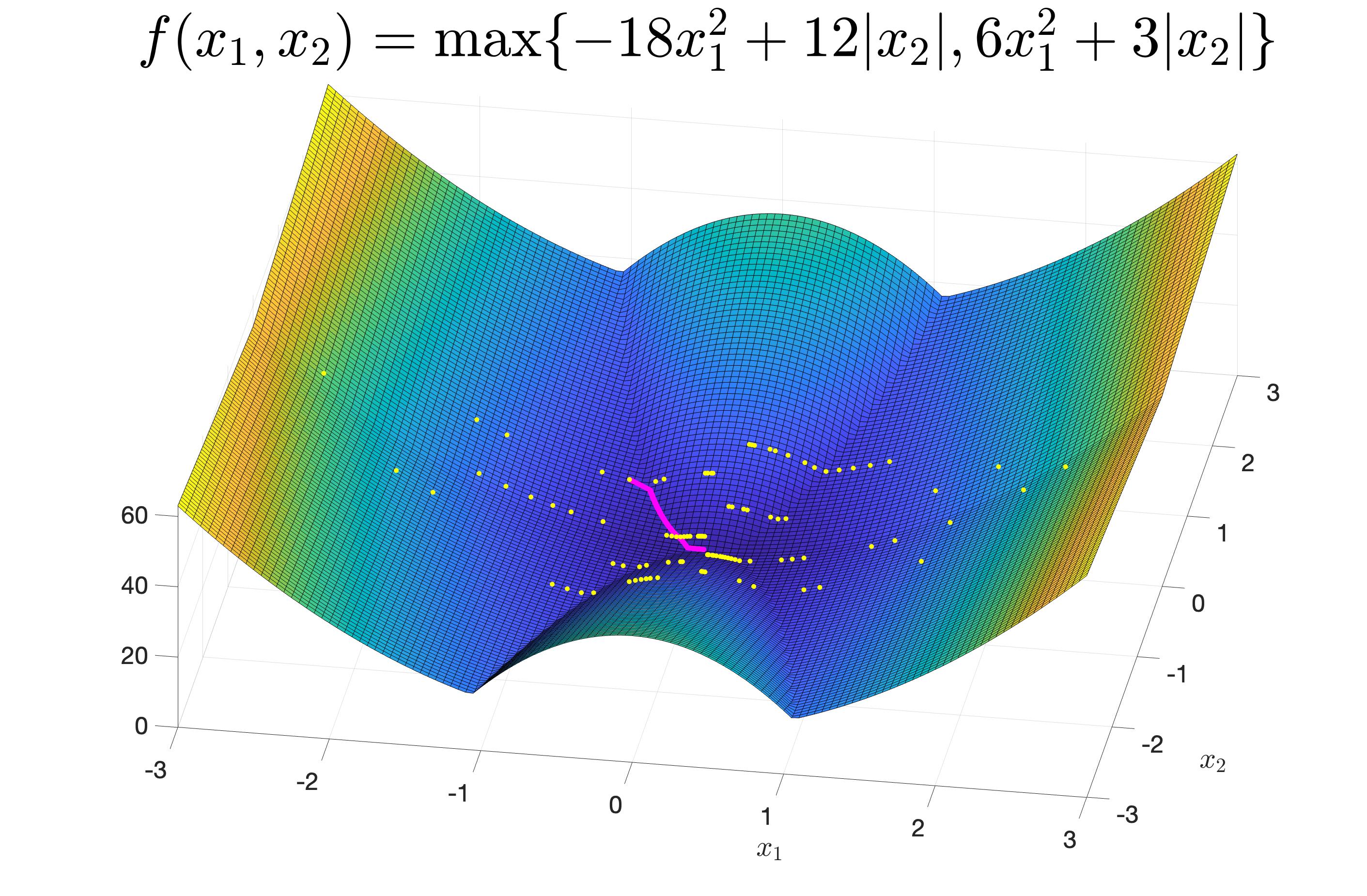}
	\caption{Continuous and discrete subgradient trajectories in magenta and yellow respectively.}
	\label{fig:strict}
\end{figure}
\begin{figure}
	\centering
 \title{asdasd}
	\includegraphics[width=.6\textwidth]{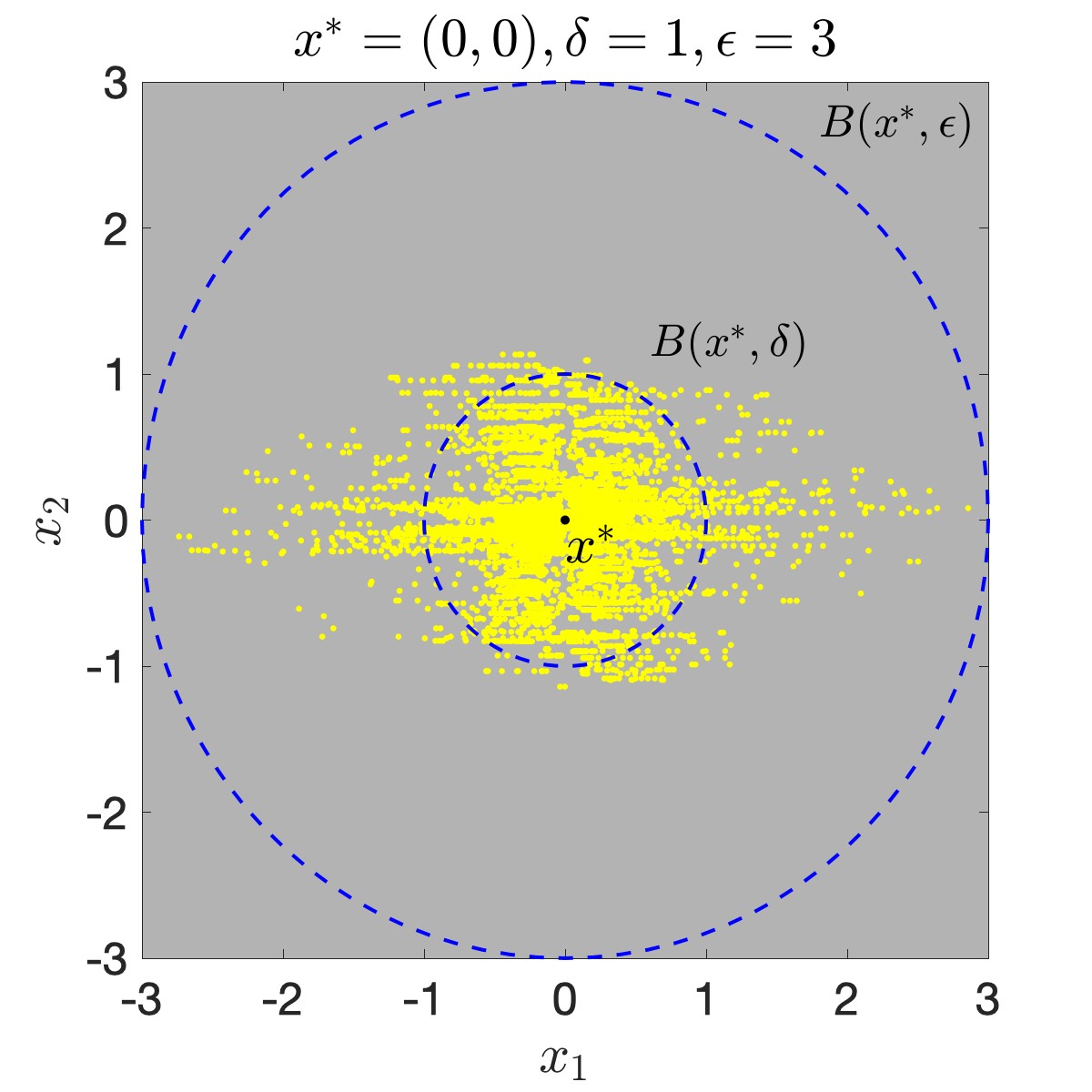}
	\caption{Subgradient method randomly initialized in the unit ball (100 trials with different step sizes).}
	\label{fig:goout}
\end{figure}

\section{Proof of stability in Example \ref{eg:stable_not_local}}
\label{subsec:Proof of stability in Example}
 The Clarke subdifferential of $f(x):=x^2\sin(1/x)$ is equal to
\begin{equation}
    \partial f(x) := 
    \left\{
   \begin{array}{ll}
        \{2x\sin(1/x) - \cos(1/x)\} & \text{if}~ x\neq 0,\\[.5mm]
        [-1,1] & \text{else}.
   \end{array}
   \right.
\end{equation}
Observe that if $x \neq 0$ is critical, then $\cos(1/x) \neq 0$, otherwise $|f'(x)| = |2x\sin(1/x) - \cos(1/x)| = 2|x||\sin(1/x)| = 2|x| \neq 0$. Apart from the origin, the critical points are thus the solutions to $\sin(1/x)/ \cos(1/x) = 1/(2x)$, that is to say, $\tan(1/x) = 1/(2x)$. Consider the function $\varphi$ defined from $\mathbb{R}\setminus \{ \pi/2 + k\pi ~|~ k \in \mathbb{Z}\}$ to $\mathbb{R}$ by $\varphi(t) := \tan(t) - t/2$. It is increasing on each interval $(\pi/2 + k\pi,\pi/2 + (k+1)\pi)$ where $k \in \mathbb{Z}$ since $\varphi'(t) = 1/\cos^2(t) - 1/2 >0$. In addition, its limits on either end of the interval are $-\infty$ and $+\infty$ respectively. As a result, $\varphi$ has exactly one root on each interval, say $t_k$. The positive roots of $\varphi$ are $t_0,t_1,t_2,\hdots$, hence the positive critical points of $f$ are $1/t_0,1/t_1,1/t_2,\hdots$ Since $f$ is odd, its negative critical points are $-1/t_0,-1/t_1,-1/t_2,\hdots$ This is illustrated in Figure \ref{fig:proof_stable_not_local}. As the figure suggests, the critical points of $f$ are alternatively local minima and local maxima. Indeed, when $k\in \mathbb{N}$, it holds that $f''(1/t_k) = 2\sin(t_k) - 2t_k \cos(t_k) - t_k^2 \sin(t_k) = \cos(t_k)[ 2\tan(t_k) -2t_k - t_k^2 \tan(t_k)] = \cos(t_k)[ 2(t_k/2) -2t_k - t_k^2 (t_k/2)] = \cos(t_k)(-t_k - t_k^3/2)$. The expression obtained is positive if $k$ is even, and negative if $k$ is odd.

In order to prove $\mathcal{M}_S$-stability, it suffices to prove the statement in \cref{def:discrete_lyapunov} for all $\epsilon>0$ sufficiently small. We may thus restrict ourselves to the case where $0<\epsilon<1/2$. Let $1/t_N$ denote the greatest critical point of $f$ in the interval $[-\epsilon,\epsilon]$, for some $N \in \mathbb{N}$. Let $\delta$ denote the second largest local minimum of $f$ less than or equal to $1/t_N$. Naturally, $\delta = 1/t_{N+2}$ if $N$ is even, and $\delta = 1/t_{N+3}$ if $N$ is odd. Without loss of generality, after possibly reducing $\epsilon$, we can assume that $N$ is even and thus define $\delta := 1/t_{N+2}$. We also define
\begin{equation}
    \bar{\alpha} := \frac{1}{2}\min\left\{~\frac{1}{t_{N}} - \frac{1}{t_{N+1}} ~,~ \frac{1}{t_{N+1}} - \frac{1}{t_{N+2}} ~,~ \frac{1}{t_{N+2}} - \frac{1}{t_{N+3}} ~ \right\}.
\end{equation}

Having chosen $\delta$ and $\bar{\alpha}$, let us fix some $\alpha \in (0,\bar{\alpha}]$ from now on. Consider a sequence $x_0,x_1,x_2,\hdots$ generated by the subgradient method with constant step size $\alpha$ and initialized in $[-\delta,\delta]$. Our goal is to show that all the iterates lie in $[-\epsilon,\epsilon]$. If $x_0,x_1,x_2,\hdots \in [-\delta,\delta]$, then there is nothing to prove since $\delta\leqslant \epsilon$. Otherwise, consider the smallest nonnegative integer $p$ for which $x_{p+1} \notin [-\delta,\delta]$. We next show that if $x_{p+1}>\delta$, then $x_{p+1},x_{p+2},x_{p+3},\hdots \in [1/t_{N+3} , 1/t_{N+1}] \subset [0,\epsilon]$. Similarly, if $x_{p+1}<-\delta$, then $x_{p+1},x_{p+2},x_{p+3},\hdots \in [-1/t_{N} , -1/t_{N+2}] \subset [-\epsilon,0]$. Putting these facts together, we obtain that $x_0,x_1,x_2, \hdots \in [-\epsilon,\epsilon]$.

Assume that $x_{p+1}>\delta$. We begin by checking that $x_{p+1} \in [1/t_{N+3} , 1/t_{N+1}]$, and then reason by induction. The lower bound follows from $\delta = 1/t_{N+2} \geqslant 1/t_{N+3}$. In order to derive the upper bound, recall that $0<\epsilon<1/2$. Hence, for all $x\in [-\epsilon,\epsilon]$ and for all $s \in \partial f(x)$, it holds that $|s|\leqslant 2$. Indeed, when $x=0$, we have $\partial f(x) = [-1,1]$ and when $x\neq 0$, we have $|f'(x)| \leqslant |2x\sin(1/x)-\cos(1/x)| \leqslant 2|x||\sin(1/x)|+|\cos(1/x)|\leqslant 2|x|+1 \leqslant 2$. Thus, for some $s_p \in \partial f(x_p)$, we have $x_{p+1} = x_p - \alpha s_p \leqslant x_p + 2\alpha \leqslant \delta + 2\alpha \leqslant \delta + 2\bar{\alpha} = 1/t_{N+2} + 2\bar{\alpha} \leqslant 1/t_{N+2} + 2(1/t_{N+1} - 1/t_{N+2})/2 = 1/t_{N+1}$. Now, assume that $x_k \in [1/t_{N+3} , 1/t_{N+1}]$ for some $k \geqslant p+1$. On the corresponding open interval, $f$ admits a unique critical point, $1/t_{N+2}$, which is a local minimum. Hence $f'(x)\leqslant 0$ when $x \in [1/t_{N+3},1/t_{N+2}]$ and $f'(x) \geqslant 0$ when $x \in [1/t_{N+2},1/t_{N+1}]$. If $x_k$ belongs to the first interval, then $1/t_{N+3} \leqslant x_k \leqslant x_k - \alpha f'(x_k) \leqslant x_k + 2\alpha \leqslant x_k + 2 \bar{\alpha} \leqslant 1/t_{N+2} + 2(1/t_{N+1} - 1/t_{N+2})/2 = 1/t_{N+1}$, that is to say, $x_{k+1} \in [1/t_{N+3},1/t_{N+1}]$. If $x_k$ belongs to the second interval, then $1/t_{N+3} \geqslant x_k \geqslant x_k - \alpha f(x_k) \geqslant x_k -2\alpha \geqslant 1/t_{N+2} - 2(1/t_{N+1} - 1/t_{N+2})/2 = 1/t_{N+1}$, that is to say, $x_{k+1} \in [1/t_{N+3},1/t_{N+1}]$. As a result, $x_{k+1} \in [1/t_{N+3} , 1/t_{N+1}]$, proving the induction. The case when $x_{p+1}<-\delta$ can be treated with similar arguments so is omitted.

\begin{figure}
  \centering
\begin{tikzpicture}[scale=7]
    \draw[->] (-.75,0) -- (.75,0) node[right] {$x$};
    \draw[->] (0,-.38) -- (0,.38) node[above] {$x^2\sin (1/x)$};
  \draw[blue,domain=0.01:.7,samples=5000] plot (\x, {\x*\x*sin((100/\x))});
  \draw[blue,domain=-.7:-0.01,samples=5000] plot (\x, {\x*\x*sin((100/\x))});
  \draw [densely dotted] (.408,-.15) -- (.408,0) node[above] {$\frac{1}{t_0}$};
  \draw [densely dotted] (-.408,0) node[below] {$-\frac{1}{t_0}$} -- (-.408,.15);
  \draw [densely dotted] (.2295,0) node[below] {$\frac{1}{t_1}$} -- (.2295,.051);
  \draw [densely dotted] (-.2295,-.051) -- (-.2295,0) node[above] {$-\frac{1}{t_1}$};
  \draw [densely dotted] (.16185,-.023) -- (.16185,0) node[above] {$\frac{1}{t_2}$};
  \draw [densely dotted] (-.16185,0) node[below] {$-\frac{1}{t_2}$} -- (-.16185,.023);
\end{tikzpicture}
 \caption{Visualization of the critical points.}
  \label{fig:proof_stable_not_local}
\end{figure}
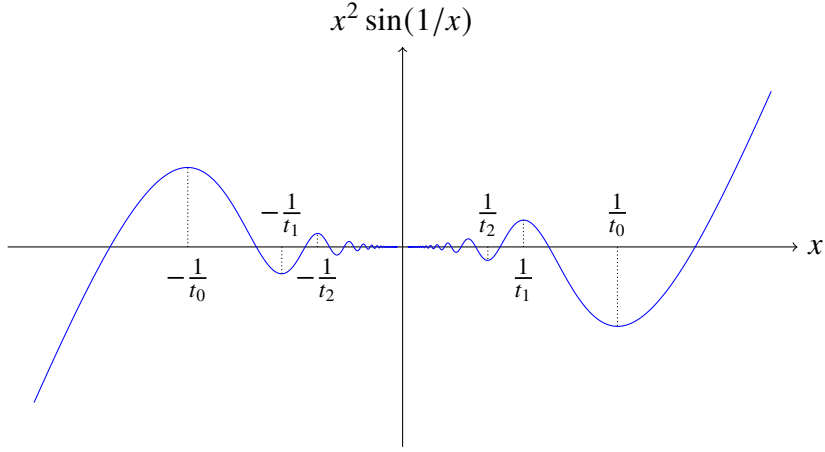

\section{Modification of Example \ref{eg:strict_local}}
\label{subsec:modification}

Consider the function $f$ defined from $\mathbb{R}^2$ to $\mathbb{R}$ by $f(x_1,x_2) := \max \{ -18|x_1|^{3/2} + 12|x_2|,6|x_1|^{3/2}+3|x_2| \}$. The origin is a strict local minimum because $f(x_1,x_2) \geqslant 6|x_1|^{3/2}+3|x_2| \geqslant 0$, with equality if and only if $x_1=x_2=0$. The function is not sharp because $f(x_1,0) = 6|x_1|^{3/2}$ for all $x_1\in \mathbb{R}$, violating the first inequality in \eqref{eq:sharp_weak}. The function is not weakly convex because the second inequality in \eqref{eq:sharp_weak} is violated by $(x_{1\rho},x_{2\rho}) := (3^{2/3}/\rho^2,8/\rho^3)$ for $\rho$ sufficiently large. Indeed, $8|x_{1\rho}|^{3/2} = 3|x_{2\rho}|$ and hence $f(x_{1\rho},x_{2\rho}) = -18|x_{1\rho}|^{3/2} + 12|x_{2\rho}| = 6|x_{1\rho}|^{3/2}+3|x_{2\rho}|$. Without loss of generality, let $\langle\cdot,\cdot\rangle$ be the Euclidean inner product on $\mathbb{R}^2$. It follows that $s_\rho := (-27 \mathrm{sign}(x_{1\rho})|x_{1\rho}|^{1/2}, 12 \mathrm{sign}(x_{2\rho}))^T \in \partial f(x_{1\rho},x_{2\rho})$. 
As a result, for $\rho$ sufficiently large, we have that $ f(x_{1\rho},x_{2\rho}) - \left\langle
        (x_{1\rho},x_{2\rho})^T , s_\rho \right\rangle - \rho(x_{1\rho}^2+x_{2\rho}^2)/2 = \hdots $
\begin{subequations}
    \begin{align}
         & = -18|x_{1\rho}|^{3/2} + 12|x_{2\rho}| - (-27 |x_{1\rho}|^{3/2} + 12 |x_{2\rho}|) - \frac{\rho}{2}(x_{1\rho}^2+x_{2\rho}^2) \\
         & = 9|x_{1\rho}|^{3/2} - \frac{\rho}{2}(x_{1\rho}^2+x_{2\rho}^2) \\
         & = 9(3^{2/3}/\rho^2)^{3/2} - \frac{\rho}{2}((3^{2/3}/\rho^2)^2+(8/\rho^3)^2) \\
         & = (27 - 3^{4/3}/2- 32/\rho^2)/\rho^3 > 0.
    \end{align}
\end{subequations}

\section{Proof of $\delta<\epsilon$ in Example \ref{eg:strict_local}}
\label{subsec:Proof of in Example}

Let $\langle\cdot,\cdot\rangle$ be the Euclidean inner product on $\mathbb{R}^2$. Recall that the objective function in Example \ref{eg:strict_local} is given by $f(x_1,x_2):=\max\{-18x_1^2 + 12|x_2|,6x_1^2+3|x_2|\}$. We reason by contradiction and assume that there exists $\epsilon>0$ and $\bar{\alpha}>0$ such that, for all $\alpha \in (0,\bar{\alpha}]$, the subgradient method with constant step size $\alpha$ initialized in $B(0,\epsilon)$ has all its iterates in $B(0,\epsilon)$. The ball $B(0,\epsilon)$ contains the following segment parametrized by $t \in [0,1]$:
\begin{equation}
    \begin{pmatrix}
    x_1(t) \\
    x_2(t)
    \end{pmatrix}
    =
    t \begin{pmatrix}
        \sqrt{-9/128 + \sqrt{81/16384+9/64\epsilon^2}} \\
        -3/16 + \sqrt{9/256+\epsilon^2}
    \end{pmatrix}.
\end{equation}
Indeed, $\|(x_1(t),x_2(t))\|^2 = t^2 (-9/128 + \sqrt{81/16384+9/64\epsilon^2}+9/256 - 3/8 \sqrt{9/256+\epsilon^2} + 9/256+\epsilon^2) = t^2 \epsilon^2$. Inside the segment, i.e. for $t\in (0,1)$, it holds that $f(x_1(t),x_2(t)) = -18x_1(t)^2 + 12|x_2(t)|>6x_1(t)^2+3|x_2(t)|$. Indeed, this is equivalent to saying that $3|x_2(t)| > 8x_1(t)^2$. If we can show that $3|x_2(1)|=8x_1^2$, then $3|x_2(t)|-8x_1(t)^2 = 3|tx_2(1)|-8(tx_1)^2 = t(3|x_2(1)|-8tx_1^2) = t(1-t)3|x_2(1)|>0$. The assumption $3|x_2(1)|=8x_1(1)^2$ is valid because $3|x_2(1)|-8x_1(1)^2 = -9/16 + \sqrt{81/256+9\epsilon^2} + 8 \times 9/128 - 8\sqrt{81/16384+9/64\epsilon^2} = 0$. As a result, the function $f$ is differentiable inside the segment and its gradient is equal to $\nabla f(x_1(t),x_2(t)) = (-36x_1(t),12\mathrm{sign}(x_2(t)))^T$. Let us apply one iteration of the subgradient method with constant step size $\alpha \in (0,\bar{\alpha}]$ to $(x_1(t),x_2(t))$ and compute the resulting norm:
\begin{subequations}
    \begin{align}
        \left\|
    \begin{pmatrix}
    x_1(t) \\
    x_2(t)
    \end{pmatrix} - \alpha \nabla f(x_1(t),x_2(t))
    \right\|^2 & = \left\| \begin{pmatrix}
    x_1(t) \\
    x_2(t)
    \end{pmatrix}
    \right\|^2
    - 2 \alpha \left\langle \begin{pmatrix}
    x_1(t) \\
    x_2(t)
    \end{pmatrix} ,
    \nabla f(x_1(t),x_2(t))
    \right\rangle + \\
    & ~~~~ \alpha^2 \left\| \nabla f(x_1(t),x_2(t))
    \right\|^2 \\
    & = t^2\epsilon^2 - 2\alpha (-36x_1(t)^2+12|x_2(t)|) + \alpha^2 (1296x_1(t)^2 + 144) \\
    & \xrightarrow{t\rightarrow 1} \epsilon^2 - 2\alpha (-36x_1(1)^2+12|x_2(1)|) + \alpha^2 (1296x_1(1)^2 + 144) \\
    & = \epsilon^2 + 72\alpha(1+18\alpha)x_1(1)^2 - 24\alpha |x_2(1)|  + 144\alpha^2 \\
    & = \epsilon^2 + 72\alpha(1+18\alpha)3|x_2(1)|/8 - 24\alpha |x_2(1)|  + 144\alpha^2 \\
    & = \epsilon^2 + \alpha(3+486\alpha)|x_2(1)|  + 144\alpha^2 \\
    & > \epsilon^2.
    \end{align}
\end{subequations}
Hence, for all $t<1$ sufficiently close to one, the subgradient method with constant step size $\alpha$ initialized at $(x_{1}(t),x_{2}(t))$, which belongs to $B(0,\epsilon)$, exits $B(0,\epsilon)$ after one iteration. This yields a contradiction. 


\chapter{Global stability of first-order methods for coercive functions}
\label{ch:global_stability}
In this chapter, we consider first-order methods with constant step size for minimizing locally Lipschitz coercive tame functions. We prove that if the method is approximated by subgradient trajectories, then the iterates eventually remain in a neighborhood of the set of critical points. The material of this chapter is based on the following article:\vspace*{3mm}

\noindent C\' edric Josz, Lexiao Lai, Global stability of first-order methods for coercive tame functions,\emph{ Mathematical Programming}, 2023 [\href{https://arxiv.org/abs/2308.00899}{preprint}] [\href{https://doi.org/10.1007/s10107-023-02020-9}{journal doi}]
 \vspace*{3mm}

In \cref{ch:local_stability}, we studied the local behavior of first-order methods around local minima of locally Lipschitz tame functions, rooted in the notion of (local) stability (\cref{def:discrete_lyapunov}). In this chapter, we turn our attention to their global behavior, namely the behavior when the initialization is not necessarily close to a local minimum. While the first-order methods (for e.g., \cref{alg:sg,alg:mag,alg:rrm,alg:rcd}) are implemented by machine learning practitioners \cite{sutskever2013importance,tensorflow2015-whitepaper,paszke2019pytorch}, the analysis of their global behavior seems to be absent from the literature when the objective is neither convex nor differentiable with a locally Lipschitz gradient. We will review the existing analysis in \cref{sec:literature}. 

In order to analyze their global behavior, we next propose a notion of global stability. Recall from \cref{ch:approx,ch:local_stability} that we refer to an iterative method with constant step size as a set-valued mapping $\mathcal{M}: \mathbb{R}^{(\mathbb{R}^n)} \times (0,\infty) \times 2^{(\mathbb{R}^n)} \times \mathbb{N}\rightrightarrows (\mathbb{R}^n)^\mathbb{N}$ which, to an objective function $f:\mathbb{R}^n \rightarrow \mathbb{R}$, a constant step size $\alpha \in (0,\infty)$, a set $X_0\subset \mathbb{R}^n$, and a natural number $k_0$ associates a set of sequences in $\mathbb{R}^n$ whose $k_0$\textsuperscript{th} term is contained in $X_0$. Also, we denote by $\mathcal{M}(f,\alpha,X_0):= \mathcal{M}(f,\alpha,X_0,0)$ the set of sequences generated by the method with constant step size $\alpha$ and initialized in $X_0$.

\begin{definition}
\label{def:global_stability}
We say that $X^*\subset \mathbb{R}^n$ is a globally $\mathcal{M}$-stable set of a locally Lipschitz function $f:\mathbb{R}^n\rightarrow\mathbb{R}$ if for all $\epsilon>0$ and for all bounded $X_0 \subset \mathbb{R}^n$, there exists $\bar{\alpha}>0$ such that
\begin{equation*}
	(x_k)_{k\in \mathbb{N}} \in \mathcal{M}(f,(0,\bar{\alpha}],X_0)\implies \exists \bar{k}: \{x_k\}_{k\geqslant \bar{k}}\subset B(X^*,\epsilon).
\end{equation*}
\end{definition}
Informally, a set is globally stable if iterates eventually remain in any neighborhood of it, given that the constant step size is sufficiently small. In contrast to local stability (\cref{def:discrete_lyapunov}), the initial iterate is not required to be close to the set. We say that $X^*\subset \mathbb{R}^n$ is a globally stable set of a locally Lipschitz function $f:\mathbb{R}^n \rightarrow \mathbb{R}$ if it is a globally $\mathcal{M}$-stable set of the function, for any $\mathcal{M}$ approximated by subgradient trajectories of $f$ (\cref{def:approx_flow_new}).

We are now ready to state the main result of this chapter, whose object is the set of critical points of a locally Lipschitz coercive tame function. Recall that a function $f:\mathbb{R}^n\rightarrow\mathbb{R}$ is coercive if $\lim_{\|x\|\rightarrow \infty} f(x) =\infty$. Many objective functions arising in data science are coercive due to the use of regularizers \cite{krogh1991simple,tibshirani1996regression}. Some objectives are naturally coercive, such as in symmetric low-rank matrix recovery problems \cite{ge2016,li2019}.

\begin{theorem}\label{thm:global_stability}
	The set of critical points of any locally Lipschitz coercive tame function is globally stable.
\end{theorem}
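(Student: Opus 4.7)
The plan is to combine coercivity, the Kurdyka--\L{}ojasiewicz inequality, and the approximation of \cref{def:approx_flow_new} to trap iterates in every neighborhood of the critical set $X^*:=\{x\in\mathbb{R}^n:0\in\partial f(x)\}$.

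First I would establish uniform boundedness of the iterates. Coercivity places any bounded $X_0$ inside a compact sublevel set $\{f\leqslant M\}$, and subgradient trajectories are nonincreasing in $f$ by \cref{prop:chain}, so they never leave that sublevel set. Combining this with \cref{def:approx_flow_new} over a fixed horizon $T$ and arguing along the lines of \cref{prop:bounded} (by contradiction, extracting a limit subgradient trajectory through Arzel\`a--Ascoli), one produces a radius $\bar R>0$ so that all iterates lie in $B(0,\bar R)$ for all sufficiently small $\alpha$.

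Next I would derive a single-window descent. Compactness of $B(0,\bar R)$ together with upper semicontinuity of $\partial f$ (\cref{prop:clarke}) give a constant $\eta>0$ with $d(0,\partial f(x))\geqslant\eta$ for every $x\in B(0,\bar R)\setminus B(X^*,\epsilon/2)$. Fix $T>0$ and take $\alpha$ small enough that each block of $\lfloor T/\alpha\rfloor$ consecutive iterates is $\epsilon'$-close (with $\epsilon'<\epsilon/2$) to a subgradient trajectory $x(\cdot)$ supplied by \cref{def:approx_flow_new}. If all iterates in the block lie outside $B(X^*,\epsilon)$, then $x(t)\notin B(X^*,\epsilon/2)$ for $t\in[0,T]$ and the chain rule yields $f(x(T))\leqslant f(x(0))-c\eta^2 T$. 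By local Lipschitz continuity of $f$ on $B(0,\bar R)$, this transfers to
\begin{equation*}
f(x_{k_0+\lfloor T/\alpha\rfloor})\leqslant f(x_{k_0})-\tfrac{c\eta^2 T}{2}.
\end{equation*}

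Since $f$ is bounded below on $B(0,\bar R)$ by continuity, the descent above can occur only finitely many times, which already ensures that iterates visit $B(X^*,\epsilon)$ infinitely often at block endpoints. To promote ``visits'' to ``eventually stays inside $B(X^*,\epsilon)$,'' I would invoke the definable Morse--Sard theorem (see the remark after \cref{thm:kl}) to restrict attention to finitely many critical values $v_1<\cdots<v_M$ in $f(B(0,\bar R))$, and apply the \L{}ojasiewicz inequality of \cref{thm:l} to each function $f-v_i$ to show that having $f(x_k)$ in a narrow band around $v_i$ forces $x_k\in B(X^*,\epsilon)$. Because the approximation bounds the cross-window fluctuation of $f$ by at most $2L\epsilon'$, which can be made smaller than $\min_i(v_{i+1}-v_i)/3$, iterates cannot drift out of the band around their current critical level without triggering another window descent, and the finite descent budget then pins them inside $B(X^*,\epsilon)$. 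I expect this final ``trap'' step to be the main technical difficulty, as it requires coordinating the finiteness of critical values with both the \L{}ojasiewicz inequality and the window-level Lipschitz fluctuation bound.
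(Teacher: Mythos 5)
Your boundedness argument, the window-level descent via the chain rule, and the uniform lower bound $\eta$ on $d(0,\partial f)$ away from $X^*$ all track the paper's own approach (\cref{lemma:containment,lemma:decrease,prop:fv_stab}), and the observation that the descent budget is finite is correct. The gap is in the final ``trap'' step: you claim that applying the \L{}ojasiewicz inequality to $f-v_i$ will show that keeping $f(x_k)$ in a narrow band around a critical value $v_i$ forces $x_k\in B(X^*,\epsilon)$. That implication is false. \cref{thm:l} needs $(f-v_i)^{-1}(0)\subset d(\cdot,X^*)^{-1}(0)$, i.e.\ $\{f=v_i\}\subset X^*$, but a level set through a critical value generically contains noncritical points. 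For instance $f(x)=(x^2-1)^2$ is coercive and tame with critical set $X^*=\{-1,0,1\}$ and critical values $\{0,1\}$, yet $f(\pm\sqrt{2})=1$ while $\pm\sqrt{2}$ are bounded away from $X^*$. So stabilizing the function values near $v_i$ does not by itself bound $d(x_k,X^*)$, and neither \cref{thm:l} nor \cref{thm:kl} can be invoked to make that jump.

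What replaces this in the paper's proof of \cref{thm:global_stability} is a pass through the subgradient norm along the approximating trajectory. Once \cref{prop:fv_stab} guarantees that $f(x_k)$ stays within $\epsilon'$ of a single critical value $f^*$ for all large $k$, the trajectory $x(\cdot)$ accompanying the block starting at $x_k$ drops in value by at most $O(\epsilon')$ over the window $[0,T]$; by the chain rule (\cref{prop:chain}) this makes $c\int_0^{T} d(0,\partial f(x(s)))^2\,ds$ small, so there is a time $t'$ with $d(0,\partial f(x(t')))\leqslant M/2$, where $M:=\inf\{d(0,\partial f(x)):d(x,X^*)\geqslant\epsilon/2,\ f(x)\leqslant\Delta\}>0$ is exactly your $\eta$. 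That places $x(t')$ in $B(X^*,\epsilon/2)$, and a Cauchy--Schwarz bound on $\int_0^{t'}\|x'(s)\|\,ds$ --- again using that the integral of the squared speed is small --- gives $\|x_k-x(t')\|\leqslant\epsilon/2$, hence $d(x_k,X^*)\leqslant\epsilon$. You have all the ingredients (the descent window, the constant $\eta$, the chain rule); the missing move is to convert ``small decrease over a window'' into ``small gradient norm somewhere along the trajectory'' and then into ``the iterate is close to that trajectory point,'' rather than trying to infer proximity to $X^*$ from function values alone.
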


\begin{remark}
\label{remark:coercive}
The assumption of coercivity in \cref{thm:global_stability} can be replaced by requiring the iterates to be uniformly bounded for all sufficiently small step sizes when initialized in $X_0$. In other words, we can ask that for any iterative method $\mathcal{M}$ that satisfies \cref{def:approx_flow_new} and for any bounded $X_0 \subset \mathbb{R}^n$, there exist $\bar{\alpha},r>0$ such that $\mathcal{M}(f,(0,\bar{\alpha}],X_0) \subset B(0,r)^{\mathbb{N}}$. Indeed, 
one can then apply \cref{thm:global_stability} to a coercive function $f_r:\mathbb{R}^n \rightarrow \mathbb{R}$ which coincides with a (possibly noncoercive) locally Lipschitz tame function $f:\mathbb{R}^n \rightarrow \mathbb{R}$ in $B(0,2r)$, namely $f_r(x):=  f(P_{B(0,2r)}(x)) + d(x,B(0,2r))$ for all $x \in \mathbb{R}^n$ where $P_{B(0,2r)}$ is the projection on $B(0,2r)$.
It is clear that $f_r$ is definable and coercive. In order to show that $f_r$ is Lipschitz continuous, it suffices to prove $g_r(x):= f(P_{B(0,2r)}(x))$ is Lipschitz continuous. Let $L>0$ denote a Lipschitz constant of $f$ in $B(0,2r)$. For all $x,y\in \mathbb{R}^n$, we have $\|g_r(x) - g_r(y)\| = \|f(P_{B(0,2r)}(x)) - f(P_{B(0,2r)}(y))\| \leqslant L\|P_{B(0,2r)}(x) - P_{B(0,2r)}(y)\| \leqslant L\|x - y\|$.
\end{remark}
The proof of \cref{thm:global_stability} can be found in \cref{sec:global_proof}. We illustrate \cref{thm:global_stability} with two examples. Recall that \cref{alg:mag,alg:rrm,alg:rcd} are approximated by subgradient trajectories under method-dependent regularity assumptions (\cref{tab:appr_alg}). The first (Figure \ref{fig:illustration}a) is nonsmooth and the second (Figure \ref{fig:illustration}b) is continuously differentiable. One can see that the iterates indeed track a subgradient trajectory up to a certain time, then go on to track another subgradient trajectory, after which they stabilize around a critical point.
\begin{figure}[ht!]
\centering
\begin{subfigure}{.49\textwidth}
  \centering
  \includegraphics[width=0.95\textwidth]{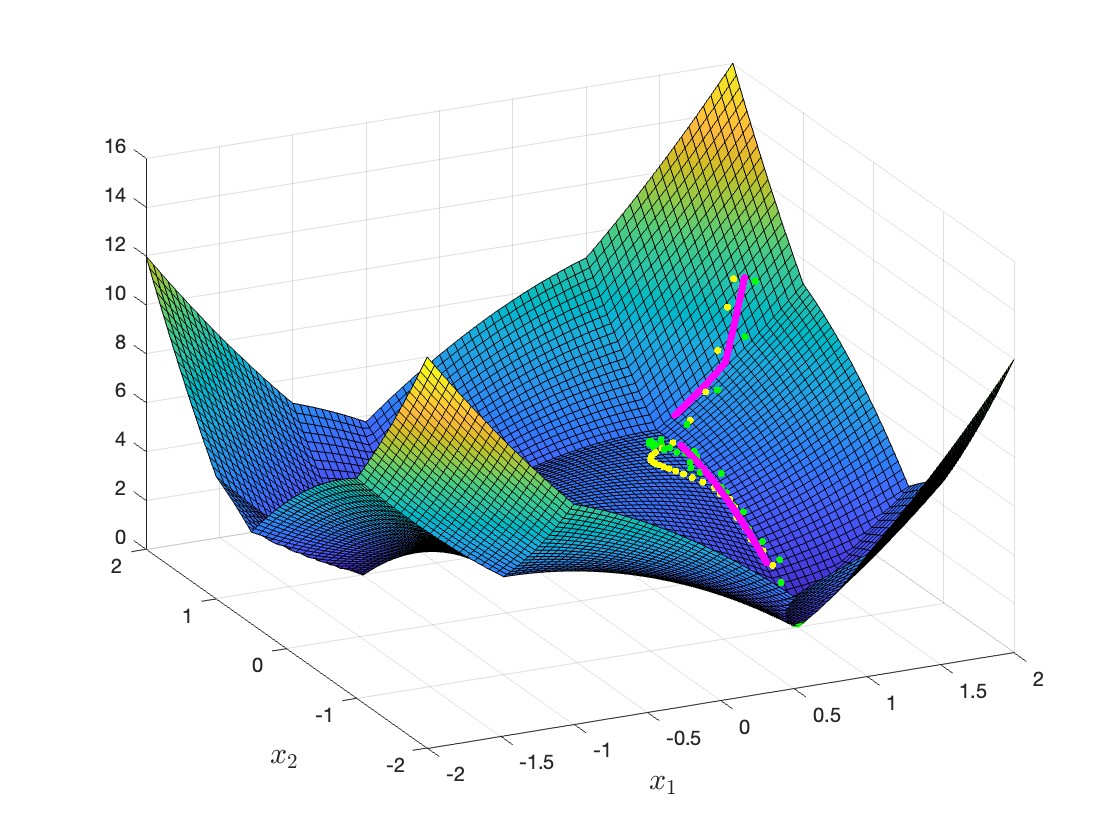}
  \caption{$f(x_1,x_2) = |x_1^2 - 1| + 2|x_1x_2+1| + |x_2^2 - 1|$.}
\end{subfigure}
 \begin{subfigure}{.49\textwidth}
  \centering
   \includegraphics[width=0.95\textwidth]{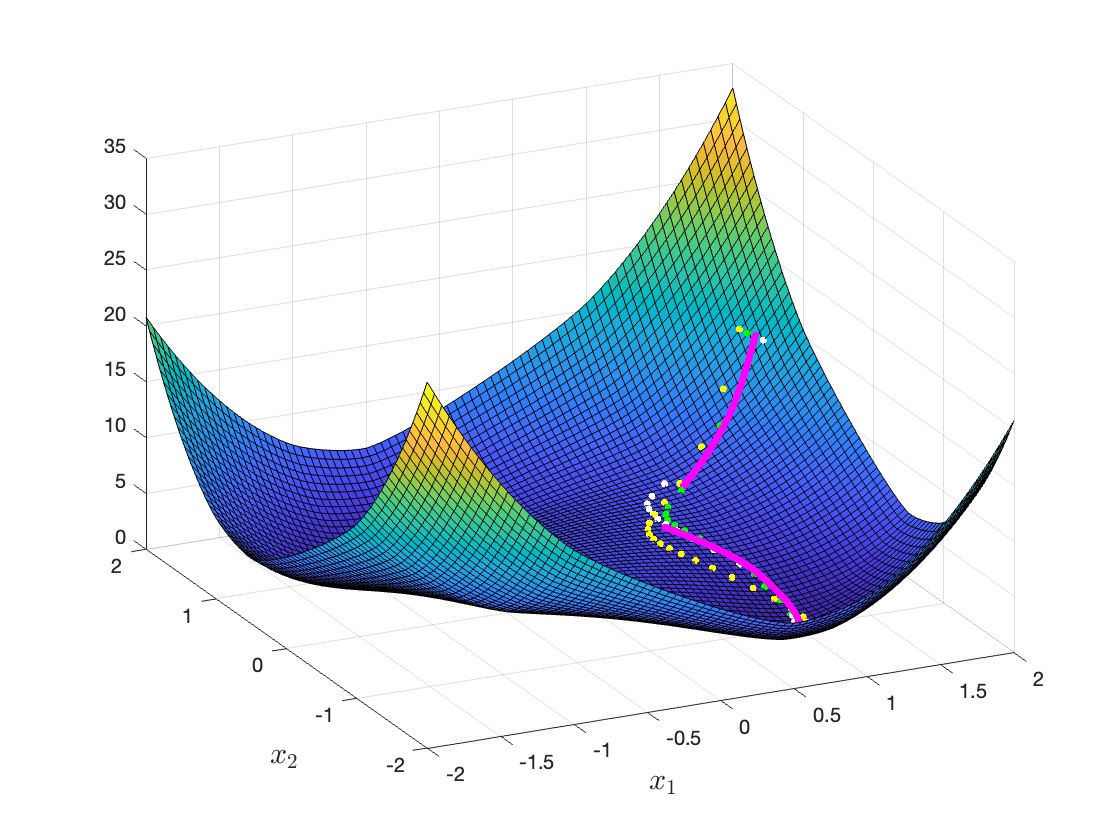}
  \caption{$f(x_1,x_2) = |x_1^2 - 1|^{3/2} + 2|x_1x_2+1|^{3/2} + |x_2^2 - 1|^{3/2}$.}
\end{subfigure}
\caption{The subgradient method with momentum, random reshuffling with momentum, and random-permutations cyclic coordinate descent method are in yellow, green, and white respectively. Subgradient trajectories are in magenta.}
\label{fig:illustration}
\end{figure}

\section{Proof of \cref{thm:global_stability}}\label{sec:global_proof}

We begin by stating two technical lemmas. The first relates a uniform neighborhood of a sublevel set with another sublevel set. The second is analogous to the descent lemma for smooth functions \cite[Lemma 1.2.3]{nesterov2018introductory} \cite[Lemma 5.7]{beck2017first}. We use $[f \leqslant \Delta]:= \{x\in \mathbb{R}^n: f(x) \leqslant \Delta\}$ to denote a sublevel set of a function $f:\mathbb{R}^n \rightarrow \mathbb{R}$ where $\Delta \in \mathbb{R}$.

\begin{lemma}
\label{lemma:containment}
    Let $f:\mathbb{R}^n \rightarrow \mathbb{R}$ be a locally Lipschitz function. Let $\Delta \in \mathbb{R}$ and let $L>0$ be a Lipschitz constant of $f$ in $[f\leqslant \Delta]$. For any $\epsilon'>0$, $B([f \leqslant \Delta - \epsilon'L], \epsilon') \subset [f \leqslant \Delta]$.
\end{lemma}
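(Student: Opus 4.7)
The plan is to pick an arbitrary $x \in B([f \leqslant \Delta - \epsilon'L], \epsilon')$, choose $y \in [f \leqslant \Delta - \epsilon'L]$ with $\|x-y\| \leqslant \epsilon'$, and show $f(x) \leqslant \Delta$. The naive move is to apply the Lipschitz estimate directly to $x$ and $y$, which would give $f(x) \leqslant f(y) + L\epsilon' \leqslant \Delta$. The subtlety is that the hypothesis only provides a Lipschitz constant $L$ on the set $[f \leqslant \Delta]$, and membership of $x$ in that set is exactly what we are trying to prove. I therefore expect the whole difficulty to lie in transferring the bound from $y$ out to $x$ without presupposing the conclusion; once that is handled, everything else is routine.

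To do so, I would run a continuity/supremum argument along the segment $\gamma(t) := y + t(x-y)$, $t \in [0,1]$. Let
\begin{equation*}
t^* := \sup\bigl\{\, t \in [0,1] : f(\gamma(s)) \leqslant \Delta \text{ for all } s \in [0,t] \,\bigr\}.
\end{equation*}
Since $f(y) \leqslant \Delta - \epsilon' L < \Delta$, continuity of $f$ ensures $t^* > 0$ and that the supremum is attained, so $f(\gamma(t^*)) \leqslant \Delta$. For every $s \in [0,t^*]$ one has $\gamma(s) \in [f \leqslant \Delta]$, so the Lipschitz bound is valid on this portion of the segment and gives
\begin{equation*}
f(\gamma(t^*)) \leqslant f(y) + L\,\|\gamma(t^*) - y\| \leqslant (\Delta - \epsilon' L) + L t^* \|x-y\| \leqslant \Delta + L\epsilon'(t^* - 1).
\end{equation*}

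If $t^* < 1$, this yields $f(\gamma(t^*)) < \Delta$ strictly, so by continuity there would exist $\eta > 0$ with $f(\gamma(s)) < \Delta$ on $[t^*, t^* + \eta]$, contradicting the maximality of $t^*$. Hence $t^* = 1$ and $f(x) = f(\gamma(1)) \leqslant \Delta$, which is the desired inclusion. The main obstacle, as indicated, is the sleight of hand of extending the Lipschitz estimate beyond the set on which it is given; the supremum argument circumvents the fact that $[f \leqslant \Delta]$ need not be convex or contain the whole segment a priori.
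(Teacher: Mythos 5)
Your argument is correct and uses essentially the same idea as the paper's proof: trace along the segment from the known point $y\in[f\leqslant\Delta-\epsilon'L]$ toward $x$, locate the first place where $f$ could reach $\Delta$, and apply the Lipschitz bound on $[f\leqslant\Delta]$ between $y$ and that point to derive a contradiction because that point lies strictly inside the $\epsilon'$-ball. The paper compresses this into a direct contradiction via the intermediate value theorem (taking a $c\in[a,b)$ with $f(c)=\Delta$), whereas you unpack the same idea with an explicit supremum/continuity argument; the two are interchangeable and no gap exists in your version. A tiny remark: you do not actually need the whole arc $\gamma([0,t^*])$ to lie in $[f\leqslant\Delta]$ for the Lipschitz estimate — it suffices that the two endpoints $y=\gamma(0)$ and $\gamma(t^*)$ do, since the Lipschitz constant on $[f\leqslant\Delta]$ controls differences between any pair of points in that set.
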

\begin{proof}
We show that $B(a,\epsilon') \subset [f\leqslant \Delta]$ for all $a \in [f\leqslant \Delta - \epsilon' L]$. Indeed, if $b \in B(a,\epsilon') \setminus [f\leqslant \Delta]$, then there exists $c$ in the segment $[a,b)$ such that $f(c) = \Delta$ and $ \epsilon' L = \Delta -(\Delta - \epsilon' L) \leqslant f(c)-f(a) \leqslant L\|c-a\| < \epsilon'L$. 
\end{proof}
\begin{lemma}
\label{lemma:decrease}
Let $f:\mathbb{R}^n \rightarrow \mathbb{R}$ be a locally Lipschitz tame function. Let $X\subset \mathbb{R}^n$ and $L$ be a Lipschitz constant of $f$ on $X$. For all $T, \epsilon', \alpha, c > 0$, $k_0 \in \mathbb{N}$, $(x_k)_{k\in \mathbb{N}} \in (\mathbb{R}^n)^\mathbb{N}$, and for any subgradient trajectory $x:[0,T]\rightarrow \mathbb{R}^n$ of $cf$ such that $x([0,T]) \subset X$, $x_k \in X$, and $\|x_k - x(\alpha (k-k_0))\|\leqslant \epsilon'$ for $k = k_0, \ldots, k_0+\lfloor T/\alpha\rfloor$, we have
	 \begin{equation*}
	     f(x_k) \leqslant f(x((k - k_0)\alpha)) + \epsilon' L \leqslant f(x_{k_0})- c\int_0^{(k - k_0)\alpha} d(0,\partial f (x(s)))^2~ds + 2\epsilon' L
	 \end{equation*}
	 for $k = k_0,\ldots, k_0 + \lfloor T/\alpha\rfloor$.
\end{lemma}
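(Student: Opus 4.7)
The plan is to establish the two inequalities in turn, both of which follow from combining Lipschitz continuity on $X$ with the chain rule for subgradient trajectories (Proposition 2.6) applied to $cf$.

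For the first inequality, $f(x_k) \leqslant f(x((k-k_0)\alpha)) + \epsilon' L$, I would simply invoke that $x_k$ and $x((k-k_0)\alpha)$ both lie in $X$, on which $f$ is $L$-Lipschitz, and that $\|x_k - x((k-k_0)\alpha)\| \leqslant \epsilon'$ by hypothesis. This gives the bound immediately.

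For the second inequality, the strategy is to track $f$ along the continuous trajectory $x(\cdot)$ and then compare $x(0)$ to $x_{k_0}$. Since $x(\cdot)$ is a subgradient trajectory of $cf$, Proposition 2.6 applied to $cf$ yields $(cf \circ x)'(t) = -\|x'(t)\|^2$ and $\|x'(t)\| = d(0,\partial(cf)(x(t))) = c\, d(0,\partial f(x(t)))$ for almost every $t$. Dividing by $c$, I get
\begin{equation*}
(f \circ x)'(t) = -c\, d(0,\partial f(x(t)))^2 \quad \text{for a.e.}\ t \in [0,T].
\end{equation*}
Integrating from $0$ to $(k-k_0)\alpha$ (which is at most $\lfloor T/\alpha\rfloor\alpha \leqslant T$, so the integration is valid), I obtain
\begin{equation*}
f(x((k-k_0)\alpha)) = f(x(0)) - c\int_0^{(k-k_0)\alpha} d(0,\partial f(x(s)))^2 \, ds.
\end{equation*}
Finally, applying the first inequality at $k = k_0$ (where $(k-k_0)\alpha = 0$) in the reverse direction, namely $f(x(0)) \leqslant f(x_{k_0}) + \epsilon' L$ via Lipschitz continuity on $X$ and $\|x_{k_0} - x(0)\| \leqslant \epsilon'$, and adding $\epsilon' L$ to both sides of the resulting bound, yields the second inequality.

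There is no real obstacle here: the statement is essentially a discrete-continuous bookkeeping lemma whose substance is entirely absorbed by Proposition 2.6 and the Lipschitz bound on $X$. The only minor point to be careful about is applying the chain rule to $cf$ rather than $f$ and correctly propagating the factor $c$, so that the energy identity for $f \circ x$ carries a single $c$ (not $c^2$) in front of the integral.
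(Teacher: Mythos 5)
Your proof is correct and follows essentially the same route as the paper's: the first inequality is the Lipschitz bound on $X$ combined with $\|x_k - x((k-k_0)\alpha)\| \leqslant \epsilon'$, and the second comes from integrating the chain rule (Proposition~\ref{prop:chain}) applied to $cf$, together with the analogous Lipschitz comparison $f(x(0)) \leqslant f(x_{k_0}) + \epsilon' L$. Your explicit bookkeeping of the factor $c$ (so that the integrand carries $c$, not $c^2$) and the remark that $(k-k_0)\alpha \leqslant T$ are both sound and match the intended reading of the paper's slightly more compressed chain of inequalities.
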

\begin{proof}
    For $k=k_0,\hdots,k_0+\lfloor T/\alpha\rfloor$, we have
	 \begin{subequations}
     \label{eq:fxk}
    	\begin{align}
		f(x_k) &\leqslant f(x((k - k_0)\alpha)) + \epsilon' L \label{eq:fxk_a}\\[2mm]
		&= f(x(0)) - (f(x(0)) - f(x((k - k_0)\alpha))) + \epsilon' L\label{eq:fxk_b}\\[2mm]
     &\leqslant f(x_0) - (f(x(0)) - f(x((k - k_0)\alpha))) + 2\epsilon' L\label{eq:fxk_c}\\[1mm]
		&= f(x_0)- c\int_0^{(k - k_0)\alpha} d(0,\partial f (x(s)))^2~ds + 2\epsilon' L. \label{eq:fxk_d}
    	\end{align}
     \end{subequations}
	 In \eqref{eq:fxk_a} and \eqref{eq:fxk_c}, we invoke the Lipschitz constant $L$ of $f$ on $X \ni x((k - k_0)\alpha),x_k$. \eqref{eq:fxk_d} is due to the chain rule of subgradient trajectories (\cref{prop:chain}).
\end{proof}

We next prove the final result needed for the proof of \cref{thm:global_stability}. We show that the function values evaluated at the iterates eventually stabilize around some critical value.

\begin{proposition}[Stability of function values]
\label{prop:fv_stab}
	Let $f:\mathbb{R}^n \rightarrow \mathbb{R}$ be a locally Lipschitz coercive tame function and let $\mathcal{M}$ be an iterative method with constant step size that is approximated by subgradient trajectories of $f$. For any bounded set $X_0 \subset \mathbb{R}^n$ and $\epsilon>0$, there exist $\bar{\alpha}, \Delta>0$ such that for all $(x_k)_{k \in \mathbb{N}} \in \mathcal{M}(f,(0,\bar{\alpha}],X_0,0)$, we have $f(x_k) \leqslant \Delta$ for all $k \in \mathbb{N}$ and there exist a critical value $f^*$ of $f$ and $\bar{k} \in \mathbb{N}$ such that $ |f(x_k) - f^*| \leqslant \epsilon$ for all $k \geqslant \bar{k}$.
\end{proposition}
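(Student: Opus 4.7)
The plan combines three ingredients: \cref{def:approx_flow_new} for approximation by subgradient trajectories, the descent lemma \cref{lemma:decrease} together with the chain rule \cref{prop:chain}, and the tameness structure of $f$. From \cref{thm:kl} applied on bounded sublevel sets (which are compact by coercivity), $f$ has only finitely many critical values in any bounded interval, and on any compact subset of $[f \leqslant \Delta]$ disjoint from critical points, $d(0, \partial f(\cdot))$ is uniformly bounded below by some $\mu > 0$ (by upper semicontinuity of $\partial f$ with compact values, \cref{prop:clarke}). The core idea is a \emph{gap argument}: if $f$ lies strictly between two consecutive critical values, the subgradient trajectory drops $f$ at rate $c\mu^2$; by taking the horizon $T$ large and the accuracy $\epsilon'$ small, this dominates the $2\epsilon' L$ slack in \cref{lemma:decrease}.

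First I would prove the uniform upper bound $f(x_k) \leqslant \Delta$. Let $M_0 := \sup_{X_0} f$, finite by continuity. Pick $\bar{\Delta} < \tilde{\Delta}$ with $M_0 < \bar{\Delta} < \tilde{\Delta}$ and no critical values in $[\bar{\Delta}, \tilde{\Delta}]$ using discreteness. Let $X_1' := [f \leqslant \tilde{\Delta} + 1]$ (compact by coercivity) with Lipschitz constant $L$, and let $\mu > 0$ be the lower bound of $d(0, \partial f)$ on the compact set $\{x \in X_1' : \bar{\Delta} \leqslant f(x) \leqslant \tilde{\Delta}\}$. Apply \cref{def:approx_flow_new} with $X_0' := [f \leqslant \tilde{\Delta}]$ and $X_1'$, choosing $\epsilon'$ with $2\epsilon' L < (\tilde{\Delta} - \bar{\Delta})/2$ and $T$ with $c\mu^2 T > \tilde{\Delta} - \bar{\Delta} + 2\epsilon' L$, to obtain $\bar{\alpha}$. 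Induct on blocks of length $K := \lfloor T/\alpha \rfloor$: if $f(x_{k_0}) \leqslant \tilde{\Delta}$ and all previous iterates lie in $X_1'$, the tracking subgradient trajectory $x(\cdot)$ of $cf$ either stays in the gap zone $\{\bar{\Delta} \leqslant f \leqslant \tilde{\Delta}\}$ throughout, whence \cref{lemma:decrease} gives $f(x_{k_0 + K}) \leqslant \tilde{\Delta} - c\mu^2 T + 2\epsilon' L < \bar{\Delta}$, or it drops below $\bar{\Delta}$ at some time and remains below by \cref{prop:chain}, giving $f(x_{k_0 + K}) \leqslant \bar{\Delta} + \epsilon' L < \tilde{\Delta}$. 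Meanwhile \cref{lemma:decrease} bounds every intra-epoch $f(x_k) \leqslant f(x_{k_0}) + 2\epsilon' L \leqslant \Delta := \tilde{\Delta} + 2\epsilon' L$, so the iterates remain in $X_1'$, closing the induction.

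Second, for the stabilization, enumerate the critical values in $[\inf_{X_1'} f, \Delta]$ as $c_1 < \cdots < c_m$. Given $\epsilon > 0$, pick $\delta \in (0, \epsilon/3)$ so that the intervals $[c_i - \delta, c_i + \delta]$ are pairwise disjoint, and re-invoke the approximation with $2\epsilon' L < \delta$ and $T$ large enough that the gap argument yields a fixed decrease $\gamma > 0$ over any gap zone $(c_i + \delta, c_{i+1} - \delta)$. Each epoch then either stabilizes near some $c_i$-neighborhood or performs a ``big drop'' to a strictly lower $c_j$-neighborhood (again by the gap argument). Big drops decrease $f$ by at least $\gamma > 0$ and strictly reduce the index of the target neighborhood, so there can be only finitely many of them before the index stabilizes at some $c_{i^*}$; thereafter the monotonicity of $f$ along $x(\cdot)$ from \cref{prop:chain}, together with the $\epsilon' L$ tracking error, keeps $f(x_k)$ in $[c_{i^*} - \delta - \epsilon' L, c_{i^*} + \delta + 2\epsilon' L] \subset (c_{i^*} - \epsilon, c_{i^*} + \epsilon)$ for all subsequent $k$, so one sets $f^* := c_{i^*}$. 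The main obstacle is the delicate balance of $T$, $\epsilon'$, and $\alpha$ so that the gap-zone decrease $c\mu^2 T$ always dominates the slack $2\epsilon' L$; in particular, controlling the ``creep-up'' within stable epochs so that it does not push $f$ back into a gap zone requires $T$ to be taken large enough for the subgradient trajectory to pull $f(x(T))$ back close to $c_{i^*}$, which in turn appeals to the tame structure via \cref{thm:kl}.
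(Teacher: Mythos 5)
Your proposal is correct and follows essentially the same route as the paper: fix a horizon $T$, use \cref{def:approx_flow_new} to track subgradient trajectories over blocks of length $\lfloor T/\alpha\rfloor$, combine \cref{lemma:decrease} and \cref{prop:chain} with the finiteness of critical values from the definable Morse--Sard theorem and a compactness-based lower bound $\mu$ (the paper's quantity $M$) on $d(0,\partial f)$ away from critical levels, and argue that only finitely many ``big drops'' across gap zones can occur before the function values stabilize near a critical value. The paper intertwines the boundedness and stabilization arguments rather than separating them, uses \cref{lemma:containment} explicitly to break the circularity in keeping iterates inside the Lipschitz region, and sometimes substitutes the Kurdyka--\L{}ojasiewicz estimate for the compactness-based bound, but these are presentational rather than substantive differences.
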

\begin{proof}
Let $f:\mathbb{R}^n \rightarrow \mathbb{R}$ be a locally Lipschitz coercive tame function. Since $f$ is tame and coercive, there exists $\Delta>0$ such that $X_0 \subset [f\leqslant \Delta/2]$ and $\Delta$ is not a critical value of $f$. By the definable Morse-Sard theorem \cite[Corollary 9]{bolte2007clarke}, $f$ has finitely many critical values $f_1>\cdots>f_p$ in $[f\leqslant \Delta]$ (and it has at least one since $f$ is coercive and continuous). Since $f$ is coercive and continuous, the compact sublevel sets $[|f-f_i|\leqslant \epsilon]$, $i = 1,\hdots,p$, are pairwise disjoint after possibly reducing $\epsilon$, which we may do without loss of generality. We may also assume that $f_1+2\epsilon\leqslant\Delta$. According to the Kurdyka-\L{}ojasiewicz inequality (\cref{thm:kl}), there exist $\rho>0$ and a strictly increasing concave continuous definable function $\psi:[0,\rho) \rightarrow [0,\infty)$ that is continuously differentiable on $(0,\rho)$ with $\psi(0) = 0$ such that $d(0,\partial f(x)) \geqslant 1/\psi'(|f(x) - f_i|)$ for all $x \in [|f-f_i|\leqslant \epsilon]$ whenever $0<|f(x)-f_i|< \rho$ for $i = 1, \ldots, p$. Without loss of generality, we assume $\epsilon<\rho$ so that $d(0,\partial f(x)) \geqslant 1/\psi'(|f(x) - f_i|)$ for all $x \in [|f-f_i|\leqslant \epsilon]$ such that $f(x) \neq f_i$.

Consider a Lipschitz constant $L\geqslant 1$ of $f$ in $[f\leqslant \Delta]$ and the quantity
	 \begin{equation}
	 \label{eq:M}
	     M := \inf\{d(0,\partial f(x)):|f(x) - f_i|\geqslant \epsilon/2, ~ i =1, \ldots, p,~ f(x)\leqslant \Delta\}>0.
	 \end{equation}
	 Fix $T>0$. Since $\mathcal{M}$ is approximated by subgradient trajectories of $f$, by Definition \ref{def:approx_flow_new}, there exist $c>0$ and $\bar{\alpha} \in (0,T/2)$ such that
	 such that for all $\alpha \in (0,\bar{\alpha}]$, $k_0\in \mathbb{N}$, and $(x_k)_{k \in \mathbb{N}} \in \mathcal{M}(f,\alpha,[f\leqslant \Delta/2],k_0)$ for which $x_0, \ldots, x_{k_0} \in [f \leqslant \Delta]$, there exists a subgradient trajectory $x:[0,T]\rightarrow \mathbb{R}^n$ of $cf$  for which $x(0)\in [f\leqslant\Delta/2]$ and
	 $\|x_k - x(\alpha (k - k_0))\|\leqslant \epsilon'$ for $k = k_0, \ldots, k_0+ \lfloor T/\alpha\rfloor$ where
	 \begin{equation*}
	     \epsilon' := \min\left\{\frac{\Delta}{4L}, \frac{cM^2T}{24L}, \frac{\epsilon}{8L}, \frac{cT}{2L\psi'(\epsilon/2)^2}\right\}>0.
	 \end{equation*}
	 Since $[|f-f_1|\leqslant \epsilon],\ldots,[|f-f_p|\leqslant \epsilon]$ are compact, after possibly reducing  $\bar{\alpha}$, the statement still holds if one replaces the initial set $[f\leqslant\Delta/2]$ by $[|f-f_1|\leqslant \epsilon],[|f-f_2|\leqslant \epsilon],\hdots,$ or $[|f-f_p|\leqslant \epsilon]$.
	 
	 From now on, we fix a constant step size $\alpha \in (0,\bar{\alpha}]$.  Consider a sequence $(x_k)_{k \in \mathbb{N}} \in\mathcal{M}(f,\alpha,X_0,0) \subset \mathcal{M}(f,\alpha,[f\leqslant \Delta/2],0)$ along with an associated subgradient trajectory $x:[0,T]\rightarrow \mathbb{R}^n$ of $cf$ for which $x(0)\in [f\leqslant \Delta/2] \subset [f\leqslant \Delta - \epsilon' L]$ and
	 $\|x_k - x(\alpha (k-k_0))\|\leqslant \epsilon'$ for $k = k_0, \ldots, k_0+K$ where $k_0 = 0$ and $K:=\lfloor T/\alpha\rfloor$. By Lemmas \ref{lemma:containment} and \ref{lemma:decrease}, for $k=0,\hdots,K$, we have $f(x_k) \leqslant f(x(k\alpha)) + \epsilon' L \leqslant f(x(0)) + \epsilon'L \leqslant \Delta/2 + \epsilon' L \leqslant \Delta$ and
	 \begin{equation}
  \label{eq:decrease}
		f(x_k) \leqslant f(x_0)- c\int_0^{k\alpha} d(0,\partial f (x(s)))^2~ds + 2\epsilon' L. 
     \end{equation}
If $c\int_0^{K \alpha} d(0,\partial f (x(s)))^2~ds  \geqslant 3\epsilon' L$, then we have $f(x_{K}) \leqslant f(x_0) -3\epsilon' L +2\epsilon' L \leqslant \Delta/2$ so that we may apply Lemmas \ref{lemma:containment} and \ref{lemma:decrease} again with $k_0 = K$. Since the continuous function $f$ is bounded below on the compact set $[f\leqslant \Delta/2]$, this process with constant decrease can only be repeated finitely many times. Thus there exist $v\in \mathbb{N}$ and an absolutely continuous function (again denoted $x(\cdot)$) such that $f(x_k) \leqslant f(x_{vK})- c\int_0^{(k-vK)\alpha} d(0,\partial f (x(s)))^2~ds + 2\epsilon' L$ and $\|x_{k}-x(\alpha(k-vK))\| \leqslant \epsilon'$ for $k = vK, \ldots, (v+1)K$ where $c\int_0^{K \alpha} d(0,\partial f (x(s)))^2~ds  < 3\epsilon' L$. Hence there exists $t'\in [0,K \alpha]$ such that $d(0,\partial f(x(t')))^2 \leqslant 3\epsilon' L/(cK\alpha) \leqslant  3\epsilon' L/(cT/2) \leqslant M^2/4$, where we use the fact that $\epsilon' \leqslant cM^2T/(24L)$. Since $d(0,\partial f(x(t'))) \leqslant M/2$ and $f(x(t')) \leqslant \Delta$, by definition of $M$ in \eqref{eq:M} there exists $i\in \{1,\ldots,p\}$ such that $|f(x(t')) - f_i| < \epsilon/2$. We also have that $f(x(t')) \leqslant f(x(0)) \leqslant f(x_{vK}) + \epsilon'L \leqslant \Delta/2 + \epsilon'L$. Thus $f_i<f(x(t')) + \epsilon/2 \leqslant \Delta/2 + \epsilon'L + \epsilon/2 \leqslant \Delta/2+3\epsilon/8$. For $k' = vK,\ldots, (v+1)K$, we have
\begin{subequations}
\label{eq:fvc}
	\begin{align}
	|f(x_{k'}) - f_i| \leqslant & |f(x_{k'}) - f(x(\alpha(k' - vK)))| + |f(x(\alpha(k' - vK))) - f(x(t'))| + \label{eq:fvc1}\\[2mm]
	& |f(x(t')) - f_i| \label{eq:fvc2} \\[2mm]
		 \leqslant & L\|x_{k'} - x(\alpha(k' - vK))\| + |f(x(0)) - f(x(K \alpha))| + \epsilon/4\label{eq:fvc3}\\[2mm]
		 \leqslant & L\epsilon' + 3\epsilon'L + \epsilon/2\label{eq:fvc4}\\[2mm]
		 \leqslant & \epsilon/8+3\epsilon/8+\epsilon/2\label{eq:fvc5}\\[2mm]
		 = & \epsilon.
	\end{align}
\end{subequations}
Indeed, \eqref{eq:fvc1} is due to the triangular inequality. We invoke the Lipschitz constant $L$ of $f$ on $[f\leqslant \Delta]$ in order to bound the first term in \eqref{eq:fvc1}. In order to bound the second term in \eqref{eq:fvc1}, we use the fact that the composition $f\circ x$ is decreasing and $0 \leqslant \alpha(k' - vK) \leqslant t' \leqslant K\alpha$. \eqref{eq:fvc4} holds because $\|x_{k'} - x(\alpha(k' - vK))\| \leqslant \epsilon'$ and $|f(x(0)) - f(x(K \alpha))| = c\int_0^{K \alpha} d(0,\partial f (x(s)))^2~ds  < 3\epsilon' L$. \eqref{eq:fvc5} is due to $\epsilon' \leqslant \epsilon/(8L)$.

We next show that $f(x_k) \leqslant f_i + \epsilon$ for all $k \geqslant k':=vK$. Without loss of generality, we assume that $k'=0$ so that by \eqref{eq:fvc} we have $f(x_k) \leqslant f_i + \epsilon$ for $k = 0, \hdots, K$. We prove that $f(x_{K+1})\leqslant f_i+\epsilon$, hence $f(x_{k})\leqslant f_i + \epsilon$ for all $k\geqslant k'$ by induction. We distinguish two cases. If $f(x_1)<f_i - \epsilon$, then $f(x_{K+1}) \leqslant f(x_1) + 2\epsilon'L < f_i - \epsilon + \epsilon/4 \leqslant f_i + \epsilon$, where the first inequality follows from $x_1 \in [f\leqslant f_i-\epsilon] \subset [f\leqslant \Delta/2 + 3\epsilon/8 - \epsilon] \subset [f\leqslant \Delta/2]$ and Lemmas \ref{lemma:containment} and \ref{lemma:decrease}. If $x_1 \in [|f-f_i|\leqslant \epsilon]$, then let $x:[0,T]\rightarrow \mathbb{R}^n$ be an associated subgradient trajectory of $cf$  such that $\|x_k - x(\alpha (k-1))\|\leqslant \epsilon'$ for $k = 1,\ldots, K+1$ and $x(0) \in [|f-f_i|\leqslant \epsilon]$. Note that for any $t \in [0,K\alpha]$, $f(x(K\alpha)) \leqslant f(x(t))\leqslant f(x(0)) \leqslant f_i+\epsilon \leqslant \Delta - \epsilon \leqslant \Delta - \epsilon' L$. By Lemmas \ref{lemma:containment} and \ref{lemma:decrease}, $x_{K+1} \in [f\leqslant\Delta]$ and $f(x_{K+1}) \leqslant f(x(K\alpha)) + \epsilon'L$. If $f(x(K\alpha)) \leqslant f_i + \epsilon/2$, we have that $f(x_{K+1}) \leqslant f(x(K\alpha)) + \epsilon'L <f_i+\epsilon/2+\epsilon/8 \leqslant f_i+\epsilon$, as desired. Otherwise, we have $f(x(t)) \in [f_i+\epsilon/2,f_i+\epsilon]$ for all $t \in [0,K \alpha]$.
By the Kurdyka-\L{}ojasiewicz inequality, we have $d(0,\partial f(x(t))) \geqslant 1/\psi'(f(x(t)) - f_i) \geqslant 1/\psi'(\epsilon/2)>0$. According to the chain rule of subgradient trajectories (\cref{prop:chain}), it holds that
\begin{subequations}
	\begin{align}
		f(x(K \alpha)) - f_i&= f(x(0)) - f_i- c\int_{0}^{K \alpha} d(0,\partial f (x(s)))^2~ds\\
		&\leqslant f(x(0)) - f_i- cK\alpha/\psi'(\epsilon/2)^2\\[1mm]
		&\leqslant f(x(0)) - f_i - cT/(2\psi'(\epsilon/2)^2)\\[2mm]
		&\leqslant \epsilon -  cT/(2\psi'(\epsilon/2)^2).
		\end{align}
\end{subequations}
Thus $f(x_{K+1}) - f_i \leqslant f(x(K \alpha)) - f_i + f(x_{K+1}) - f(x(K \alpha)) \leqslant \epsilon - cT/(2\psi'(\epsilon/2)^2) + \epsilon'L \leqslant \epsilon$, where we used the fact that $\epsilon' \leqslant (cT)/(2L\psi'(\epsilon/2)^2)$.
If $|f(x_k) - f_i| \leqslant \epsilon$ for all $k \geqslant k'$, then the conclusion of the theorem follows. Otherwise, there exists $\hat{k}\geqslant k'$ such that $f(x_{\hat{k}})< f_i - \epsilon \leqslant \Delta/2 + 3\epsilon/8 - \epsilon \leqslant \Delta/2$. Following the same argument as in the paragraph below \eqref{eq:decrease}, there exists $v'\in \mathbb{N}$ and an absolutely continuous function (again denoted $x(\cdot)$) such that $f(x_k) \leqslant f(x_{\hat{k} + v'K})- c\int_0^{(k - (\hat{k} + v'K))\alpha} d(0,\partial f (x(s)))^2~ds + 2\epsilon' L$ and $\|x_{k}-x(\alpha(k-(\hat{k} + v'K)))\| \leqslant \epsilon'$ for $k = \hat{k}+v'K, \ldots, \hat{k}+(v'+1)K$ where $c\int_0^{K \alpha} d(0,\partial f (x(s)))^2~ds  < 3\epsilon' L$. As before, it follows that there exist $t''\in [0,T]$ and $j \in \{1,2, \ldots, p\}$ such that $|f(x(t'')) - f_j| \leqslant \epsilon/2$. Since $f(x(t'')) \leqslant f(x(0)) \leqslant f(x_{\hat{k} + v'K}) + \epsilon'L\leqslant f(x_{\hat{k}}) + 3\epsilon'L < f_i - \epsilon+ 3\epsilon/8 = f_i - 5\epsilon/8$, it holds that $f_j<f_i$. Replicating \eqref{eq:fvc1}-\eqref{eq:fvc5}, we get $|f(x_{k''}) - f_j| \leqslant \epsilon$ for $k''= \hat{k} + v'K, \ldots, \hat{k} + (v'+1)K$. By the same argument as in the previous paragraph, we have $f(x_k) \leqslant f_j+ \epsilon$ for all $k \geqslant k'':=\hat{k} + (v'+1)K$. Since $f$ only has finitely many critical values that are below $\Delta$, the conclusion of the theorem follows. 
\end{proof}

We are now ready to prove \cref{thm:global_stability}. 

\begin{proof}[Proof of \cref{thm:global_stability}]
Let $f:\mathbb{R}^n \rightarrow \mathbb{R}$ be a locally Lipschitz coercive tame function and let $\mathcal{M}$ be an iterative method with constant step size that is approximated by subgradient trajectories of $f$ (\cref{def:approx_flow_new}). Let $\epsilon>0$  and let $X_0\subset \mathbb{R}^n$ be bounded. By \cref{prop:fv_stab}, there exist $\alpha_1,\Delta>0$ such that for all $(x_k)_{k\in\mathbb{N}} \in \mathcal{M}(f,(0,\alpha_1], X_0, 0)$, $f(x_k) \leqslant \Delta$ for all $k \in \mathbb{N}$.
Let $L$ denote a Lipschitz constant of $f$ on the compact set $[f\leqslant \Delta]$ and consider the quantity
\begin{equation}
    M := \inf\{d(0,\partial f(x)): d(x,X^*)\geqslant \epsilon/2, f(x)\leqslant \Delta\}>0,
\end{equation}
where $X^*$ is the set of critical points of $f$. Fix $T>0$. Since $\mathcal{M}$ is approximated by subgradient trajectories of $f$, by Definition \ref{def:approx_flow_new} there exist $c>0$ and $\alpha_2 \in (0,\alpha_1]$ such that for all $\alpha \in (0,\alpha_2],k_0 \in \mathbb{N}$ and $(x_k)_{k\in \mathbb{N}} \in \mathcal{M}(f,\alpha,[f\leqslant \Delta], k_0)$ for which $x_0,\ldots,x_{k_0} \in [f \leqslant \Delta]$, there exists a subgradient trajectory $x:[0,T]\rightarrow \mathbb{R}^n$ of $cf$ for which $x(0)\in [f\leqslant\Delta]$ and $\|x_{k} - x((k - k_0)\alpha)\|\leqslant \epsilon'$ for $k = k_0, \ldots, k_0 + \lfloor T/\alpha\rfloor$ where $\epsilon' := \min\{\epsilon/4,cM^2T/(16(1+L)),\epsilon^2/(32(1+L)cT)\}$. Again by \cref{prop:fv_stab} there exists $\alpha_3 \in (0,\alpha_2]$ such that for all $(x_k)_{k \in \mathbb{N}} \in \mathcal{M}(f,(0,\alpha_3],X_0,0)$, there exist a critical value $f^*$ of $f$ and $\bar{k} \in \mathbb{N}$ such that $ |f(x_k) - f^*| \leqslant \epsilon'$ for all $k \geqslant \bar{k}$. Let $\bar{\alpha} := \min\{\alpha_3, \epsilon'/(2c(1+L)),T/2\}$. 

Let $\alpha \in (0,\bar{\alpha}]$, $(x_k)_{k \in \mathbb{N}} \in \mathcal{M}(f,\alpha, X_0, 0)$, and fix a corresponding $f^*$ and $\bar{k}$. We fix some $k \geqslant \bar{k}$ from now on and show that $d(x_k,X^*)\leqslant \epsilon$. Since $(x_{k'})_{k'\in \mathbb{N}}\in \mathcal{M}(f,\alpha, [f\leqslant \Delta],k)$ and $\{x_{k'}\}_{k'\in \mathbb{N}} \subset [f\leqslant \Delta]$,  there exists a subgradient trajectory $x:[0,T]\rightarrow \mathbb{R}^n$ of $cf$ for which $x(0)\in [f\leqslant\Delta]$ and $\|x_{k'} - x(\alpha (k'-k))\|\leqslant \epsilon'$ for $k' = k, \ldots, k + K$ where $K:= \lfloor T/\alpha\rfloor$. 
By Lemma \ref{lemma:decrease}, we have
 \begin{equation}
 \label{eq:infgrad}
   c\int_0^{K \alpha} d(0,\partial f (x(s)))^2~ds \leqslant f(x_k) - f(x_{k + K}) + 2\epsilon' L \leqslant 2\epsilon' (1+L).
 \end{equation}
Thus there exists $t' \in [0,K \alpha]$ such that $d(0,\partial f(x(t')))^2\leqslant 2\epsilon' (1+L)/(cK\alpha) \leqslant 2\epsilon' (1+L)/(cT/2) \leqslant M^2/4$, where we use the fact that $\epsilon' \leqslant cM^2T/(16(1+L))$.
As $f(x(t')) \leqslant f(x(0))\leqslant \Delta$, we have $d(x(t'),X^*) \leqslant \epsilon/2$. It now suffices to show that $\|x_k-x(t')\| \leqslant \epsilon/2$. Notice that $\|x_k - x(0)\| \leqslant \epsilon' \leqslant \epsilon/4$ and
\begin{subequations}
    \begin{align}
        \|x(0) - x(t')\| &\leqslant \int_0^{t'} \|x'(s)\|~ds \label{eq:fvl-a}\\
        &= \int_0^{t'} c~d(0,\partial f(x(s)))~ds\label{eq:fvl-b}\\
        &\leqslant \sqrt{\int_0^{t'} c~ds} \sqrt{\int_0^{t'} c~d(0,\partial f(x(s)))^2~ds}\label{eq:fvl-c}\\
        &\leqslant \sqrt{\int_0^{T} c~ds} \sqrt{\int_0^{K \alpha}
        c~d(0,\partial f(x(s)))^2~ds}\label{eq:fvl-d}\\[2mm]
        &\leqslant \sqrt{cT} \sqrt{2\epsilon' (1+L)}\label{eq:fvl-e}\\[2mm]
        &\leqslant \epsilon/4.\label{eq:fvl-f}
    \end{align}
\end{subequations}
Indeed, \eqref{eq:fvl-a} is due to triangular inequality. \eqref{eq:fvl-b} is a consequence of the chain rule of subgradient trajectories (\cref{prop:chain}). \eqref{eq:fvl-c} is due to the Cauchy-Schwarz inequality. \eqref{eq:fvl-f} is due $\epsilon' \leqslant \epsilon^2/(32(1+L)cT)$. Summing up, we have $|d(x_k,X^*) - d(x(t'),X^*)| \leqslant \|x_k-x(t')\|\leqslant \|x_k-x(0)\|+\|x(0)-x(t')\| \leqslant \epsilon/2$ and thus $d(x_k,X^*) \leqslant \epsilon$. 
\end{proof}


\chapter{Sufficient conditions for instability of the subgradient method}\label{ch:instability}
In this chapter, we provide sufficient conditions for instability of the subgradient method with constant step size around a local minimum of a locally Lipschitz tame function. They are satisfied by several spurious local minima arising in robust principal component analysis and neural networks.  The material of this chapter is based on the following article:\vspace*{3mm}

\noindent C\' edric Josz, Lexiao Lai, Sufficient conditions for instability of the subgradient method with constant step size, \emph{SIAM Journal on Optimization}, 2024 [\href{https://arxiv.org/abs/2211.14852}{preprint}] [\href{https://epubs.siam.org/doi/full/10.1137/22M1535723}{journal doi}]
 \vspace*{3mm}
 

In \cref{ch:local_stability}, we proposed and studied a notion of discrete Lyapunov stability for first-order methods that are approximated by subgradient trajectories (\cref{def:approx_flow_new}). Those results apply in particular to the subgradient method with constant step size (\cref{alg:sg}), which satisfies \cref{def:approx_flow_new} by \cref{thm:subg_approx}. Throughout this chapter, we say that a point is stable (resp. unstable) if it is stable (resp. not stable) in the sense of \cref{def:discrete_lyapunov} where the iterative method $\mathcal{M}$ is taken to be the subgradient method (see \cref{def:discrete_lyapunov_subg}).

Let us summarize what do we know so far regarding the (local) stability of the subgradient method. Assume that the objective function is locally Lipschitz and tame. In order for a point to be stable, it is necessary for it to be a local minimum (\cref{thm:necessary}) and it suffices for it to be a strict local minimum (\cref{thm:sufficient_strict}). If the function is additionally differentiable with a locally Lipschitz gradient, then it suffices to be a local minimum \cite[Proposition 3.3]{absil2005convergence}. 

In this chapter, we show that the existence of a Chetaev function \cite{chetaev1961stability} in a neighborhood of a non-strict local minimum satisfying certain geometric properties guarantees instability. Chetaev functions are similar to Lyapunov functions, except that they increase along the dynamics rather than decrease. We check that the geometric properties, which involve higher-order metric subregularity \cite{li2012holder, mordukhovich2015higher,zheng2015holder} and the Verdier condition \cite{verdier1976stratifications}, hold in several applications of interest and exhibit corresponding Chetaev functions. 

The Verdier condition was recently introduced to the field of optimization by Bianchi \textit{et al.} \cite{bianchi2023stochastic} and Davis \textit{et al.} \cite{davis2023active}. Those works extend to the nonsmooth setting the pioneering work by Pemantle \cite{pemantle1990nonconvergence} on the nonconvergence to strict saddle points of the perturbed gradient method with diminishing step size. Precisely, they consider the update rule $x_{k+1} \in x_k - \alpha_k (\partial f(x_k) + \epsilon_k)$ for all $k\in \mathbb{N}$, where there exist $0<c_1<c_2$ and $\gamma \in (1/2,1]$ such that $c_1/k^\gamma \leqslant \alpha_k \leqslant c_2/k^\gamma$ for all $k\in \mathbb{N}^* := \{1,2,3,\hdots\}$. Also, the random variable $\epsilon_k$ is drawn uniformly from a ball of radius $r>0$ centered at the origin. They prove nonconvergence to active strict saddles \cite[Definition 2.3]{davis2023active} satisfying the Verdier condition and an angle/proximal aiming condition \cite[Theorem 3]{bianchi2023stochastic} \cite[Theorem 6.2]{davis2023active}.

As shown by Lee \textit{et al.} \cite[Theorem 4]{lee2016} (see also \cite{panageas2017}), in the smooth setting and with constant step size, adding random noise is actually not necessary to prevent convergence to strict saddle points almost surely. More recently, it was observed \cite[Figure 3]{kleinberg2018alternative} that the gradient method with constant step size can escape spurious local minima after adding uniform random noise. A similar observation on the benefits of noise was made in \cite{keskar2016large} when training neural networks: large batch sizes tend to converge to sharp local minima \cite[Metric 2.1]{keskar2016large}, while small batch sizes tend to converge to flat local minima. We show that critical points can be inherently unstable due to the local geometry of the objective function, without adding any noise.

Let us restate the notion of stability (\cref{def:discrete_lyapunov}) in the setting of the subgradient method.

\begin{definition}
\label{def:discrete_lyapunov_subg}
We say that $x^*\in \mathbb{R}^n$ is a stable point of a locally Lipschitz function $f:\mathbb{R}^n\rightarrow\mathbb{R}$ if for all $\epsilon>0$, there exist $\delta>0$ and $\bar{\alpha}>0$ such that for all $\alpha \in (0,\bar{\alpha}]$, the subgradient method with constant step size $\alpha$ initialized in $B(x^*,\delta)$ has all its iterates in $B(x^*,\epsilon)$.
\end{definition}

According to the above definition, a point $x^* \in \mathbb{R}^n$ is unstable if there exists $\epsilon>0$ such that for all $\delta>0$ and $\bar{\alpha}>0$, there exists $\alpha \in (0,\bar{\alpha}]$ and an initial point $x_0\in B(x^*,\delta)$ such that at least one of the iterates of the subgradient method with constant step size $\alpha$ does not belong to $B(x^*,\epsilon)$. The sufficient conditions proposed in this chapter actually imply instability in a stronger sense.

\begin{definition}
\label{def:strong_unstable}
We say that $x^*\in \mathbb{R}^n$ is a strongly unstable point of a locally Lipschitz function $f:\mathbb{R}^n\rightarrow\mathbb{R}$ if there exists $\epsilon>0$ such that for all but finitely many constant step sizes $\alpha>0$ and for almost every initial point in $B(x^*,\epsilon)$, at least one of the iterates of the subgradient method does not belong to $B(x^*,\epsilon)$.
\end{definition}

 In order to describe the nature of the set of critical points around a non-strict local minimum, we recall the definition of a smooth manifold.



\begin{definition}
\label{def:manifold}
A subset $S$ of $\mathbb{R}^n$ is a $C^p$ manifold with positive $p\in \mathbb{N}$ of dimension $m \in \mathbb{N}$ at $x \in S$ if there exists a Euclidean space $E$ of dimension $n-m$ such that there exists an open neighborhood $U$ of $x$ in $\mathbb{R}^n$ and a $p$ times continuously differentiable function $\varphi: U \rightarrow E$ such that $S\cap U = \varphi^{-1}(0)$ and whose Jacobian is surjective.
\end{definition}


We will use the following notions related to a $C^p$ manifold $S$ at a point $x$. According to \cite[Example 6.8]{rockafellar2009variational}, the tangent cone $T_S(x)$ \cite[6.1 Definition]{rockafellar2009variational} and the normal cone $N_S(x)$ \cite[6.3 Definition]{rockafellar2009variational} at a point $x$ in $S$ are respectively the kernel of $\varphi'(x)$ and the range of $\varphi'(x)^*$ where $\varphi'(x)$ is the Jacobian of the function $\varphi$ in \cref{def:manifold} at $x$ and $\varphi'(x)^*$ is its adjoint. 

In order to describe the variation of the objective function around a non-strict local minimum, we borrow the notion of metric $\theta$-subregularity of a set-valued mapping \cite{li2012holder,mordukhovich2015higher,zheng2015holder}. It is a generalization of metric subregularity \cite[Equation (4)]{van2015metric} \cite[Definition 2.3]{artacho2008characterization} \cite[Definition 3.1]{dontchev2004regularity} that has been used to study the Mordukhovich subdifferential \cite[Theorem 3.4]{mordukhovich2015higher}. 
Given a set-valued mapping $F:\mathbb{R}^n\rightrightarrows\mathbb{R}^m$, let $\mathrm{graph}~F := \{ (x,y) \in \mathbb{R}^n \times \mathbb{R}^m : F(x) \ni y \}$.

\begin{definition}
\label{def:submetric}
\cite[Definition 3.1]{li2012holder} A mapping $F:\mathbb{R}^n\rightrightarrows\mathbb{R}^m$ is metrically $\theta$-subregular at $(\bar{x},\bar{y}) \in \mathrm{graph}~F$ with $\theta\in \mathbb{R}$ if there exist $c>0$ and a neighborhood $U$ of $\bar{x}$ such that $d(x,F^{-1}(\bar{y})) \leqslant c d(\bar{y},F(x))^\theta$ for all $x \in U$.
\end{definition}


We introduce two final definitions in order to further describe the variation of the objective function around a nonstrict local minimum.
\begin{definition}
\label{def:riemannian gradient}
\cite[Definition 3.30]{boumal2023introduction}
Let $f:\mathbb{R}^n\rightarrow \mathbb{R}$ be a locally Lipschitz function and $S\subset \mathbb{R}^n$ be a $C^p$ manifold at $x$. We say that $f$ is $C^p$ on $S$ at $x$ if there exists a neighborhood $U$ of $x$ and a $p$ times continuously differentiable function $\bar{f}:U \rightarrow \mathbb{R}$ such that $f(y) = \bar{f}(y)$ for all $y \in S\cap U$.
\end{definition}

According to \cite[Definition 3.58, Proposition 3.61]{boumal2023introduction}, the Riemannian gradient $\nabla_S f(x)$ of $f$ on $S$ at $x$ is given by $\nabla_S f(x):= P_{T_S(x)}(\nabla \bar{f}(x))$.

\begin{definition}
\label{def:verdier}
\cite[Definition 5 iii)]{bianchi2023stochastic} Let $f:\mathbb{R}^n\rightarrow \mathbb{R}$ be a locally Lipschitz function and let $S\subset \mathbb{R}^n$ be a $C^1$ manifold at a point $x^* \in \mathbb{R}^n$. Assume that $f$ is $C^1$ on $S$ at $x^*$. We say that $f$ satisfies the Verdier condition at $x^*$ along $S$ if there exist a neighborhood $U$ of $x^*$ and $c>0$ such that for all $y \in S\cap U$, $x \in U \setminus S$ and $v \in \partial f(x)$, we have $\left\|P_{T_S (y)}(v)-\nabla_{S} f(y)\right\| \leqslant c\|x-y\|$.
\end{definition}

The Verdier condition \cite[Equation (1.4)]{verdier1976stratifications} was introduced in 1976 to study the relationship between submanifolds arising in the Whitney stratification \cite{whitney1965tangents}. It was later shown that a finite family of definable sets always admits a Verdier stratification \cite[1.3 Theorem]{le1998verdier}, that is, for which the Verdier condition holds at every point on each stratum. Bianchi \textit{et al.} \cite{bianchi2023stochastic} and Davis \textit{et al.} \cite{davis2023active} recently used this condition to guarantee that a perturbed subgradient method on tilted functions with diminishing step size does not converge to active saddle points almost surely.

In the context of optimization, the Verdier condition poses a Lipschitz-like condition on the projection of the subgradients and the Riemannian gradient of the objective function along a $C^1$ manifold. Such a condition is reasonable since the domain of a continuous definable function always admits a Verdier stratification such that the function satisfies the Verdier condition at every point along each stratum \cite[Theorem 1]{bianchi2023stochastic} \cite[Theorem 3.29]{davis2023active}. However, the manifold induced by the critical points around a non-strict local minimum may not be contained in any strata, in which case the Verdier condition need not hold. It is for this reason that the Verdier condition appears as an assumption in Theorem \ref{thm:suff_unstable} below. We illustrate the Verdier condition with the following two examples, where $\|\cdot\|$ is induced by the Euclidean inner product. They are illustrated in Figures \ref{fig:verdier} and \ref{fig:not_verdier} respectively.
\begin{example}
    Let $f:\mathbb{R}^2\rightarrow \mathbb{R}$ be the function defined by $f(x_1,x_2) := |x_1 x_2 - 1|$. It satisfies the Verdier condition at $x^*:= (1,1)$ along its set of critical points $S := \{(x_1,x_2) \in \mathbb{R}^2 :x_1 x_2 = 1\} \cup \{(0,0)\}$. Consider the neighborhood $U:= B(x^*,0.5)$ of $x^*$. For all $(y_1,y_2) \in S\cup U$, we have that $T_S(y_1,y_2) = \{(x_1,x_2)\in \mathbb{R}^2:y_2 x_1 + y_1 x_2 = 0\}$ and $\nabla_S f(y_1,y_2) = (0,0)$. For all $(x_1,x_2) \in U\setminus S$, we have that $\partial f(x_1,x_2) = \{(\mathrm{sign}(x_1x_2 - 1)x_2, \mathrm{sign}(x_1x_2 - 1)x_1)\}$, where $\mathrm{sign}(t) = 1$ if $t>0$ and $\mathrm{sign}(t) = -1$ if $t<0$. Thus, for all $(y_1,y_2) \in S\cap U$, $(x_1,x_2) \in U \setminus S$ and $v \in \partial f(x_1,x_2)$, we have
 \begin{align*}
     \|P_{T_S(y_1,y_2)}(v) - \nabla_S f(y_1,y_2)\| &=\frac{|x_1 y_2 - x_2 y_1|}{\sqrt{y_1^2 + y_2^2}}\\
     &= \frac{|(x_1 - y_1)y_2 - (x_2 - y_2) y_1|}{\sqrt{y_1^2 + y_2^2}} \\
     &\leqslant \sqrt{(x_1 - y_1)^2 + (x_2 - y_2)^2}\\
     &= \|(x_1,x_2) - (y_1,y_2)\|
 \end{align*}
by the Cauchy-Schwarz inequality.
\end{example}
\begin{example}
Let $f:\mathbb{R}^2\rightarrow \mathbb{R}$ be the function defined by $f(x_1,x_2) := \max\{- x_1^2+2x_2 , |x_2|\}$, which is a slight modification of \cite[Example 3.1]{davis2023active}. It does not satisfy the Verdier condition at $x^*:= (0,0)$ along its set of critical points $S := \mathbb{R}\times \{0\}$. Indeed, consider the sequences $y^k := (1/k,0) \in S$, $x^k := (1/k,1/k^2) \notin S$ and $v^k:= (-2/k,2)$ defined for all $k \in \mathbb{N}^*$. They satisfy $y^k \rightarrow x^*$, $x^k \rightarrow x^*$, and $v^k \in \partial f(x^k)$, yet
\begin{equation*}
    \frac{\|P_{T_S(y^k)}(v^k) - \nabla_S f(y^k)\|}{\|x^k - y^k\|} = \frac{\|(-2/k,0) - (0,0)\|}{\|(1/k,1/k^2) - (1/k,0)\|} = \frac{2/k}{1/k^2} \rightarrow \infty.
\end{equation*}
\end{example}

\begin{figure}[ht]
\centering
\begin{subfigure}{.49\textwidth}
  \centering
  \includegraphics[width=1\textwidth]{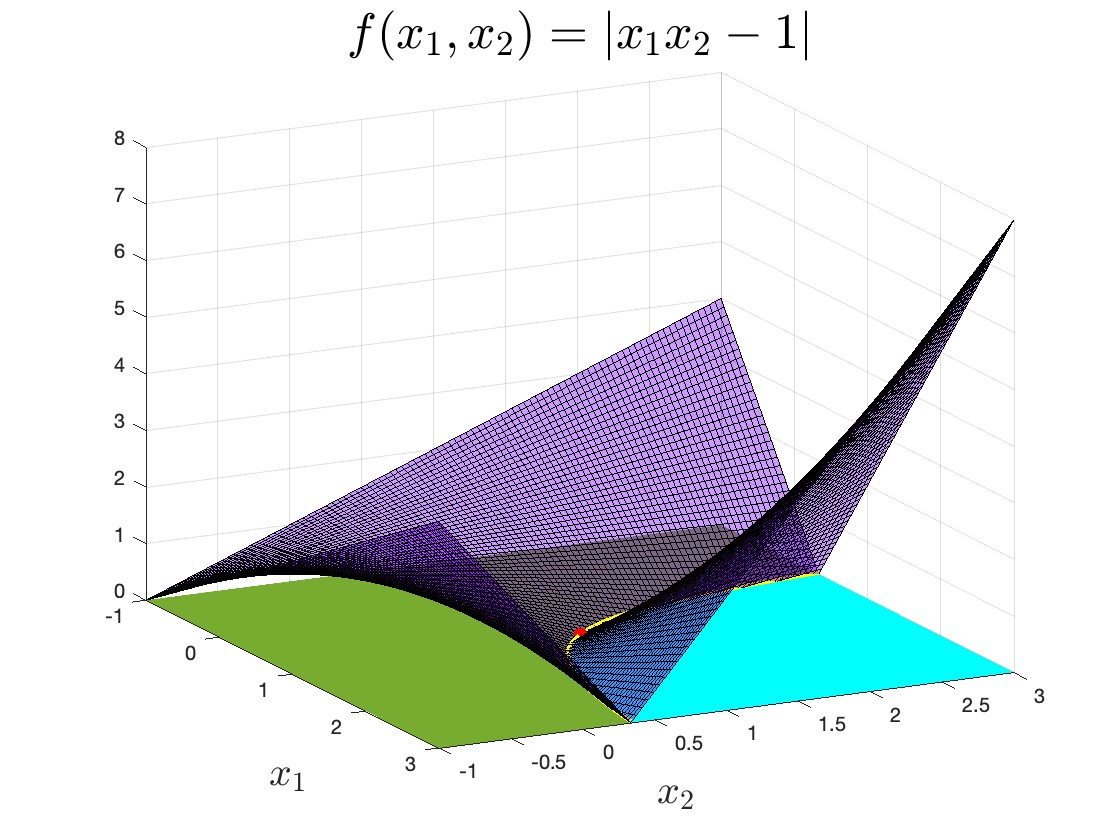}
  \caption{Verdier condition verified at $(1,1)$ along manifold of critical points.}
  \label{fig:verdier}
\end{subfigure}
\begin{subfigure}{.49\textwidth}
  \centering
  \includegraphics[width=1\textwidth]{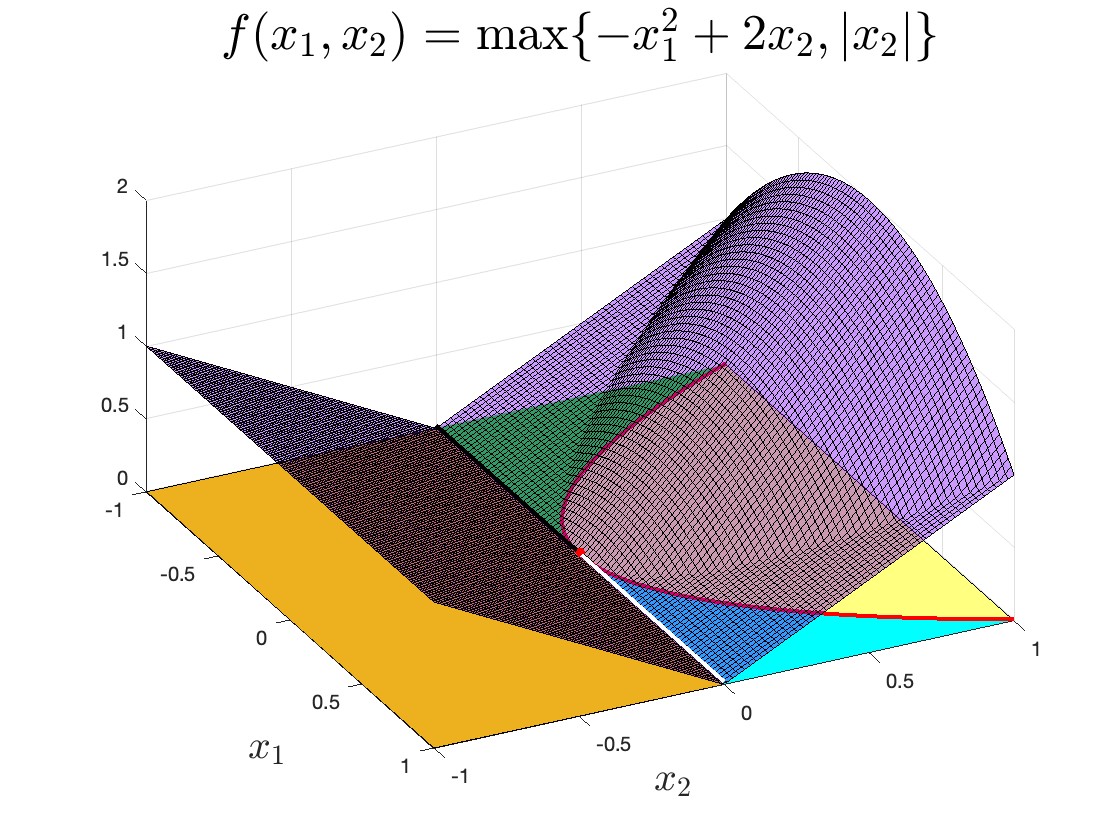}
  \caption{Verdier condition violated at $(0,0)$ along manifold of critical points.}
    \label{fig:not_verdier}
\end{subfigure}
\caption{Verdier stratification of the domain of two continuous semi-algebraic functions.}
\end{figure}


Before we state the main result of this chapter, we show that  the subgradient method can avoid any null set, building on the arguments in \cite{bolte2020mathematical}.

\begin{lemma}\label{lemma:avoidance}
	Let $f:\mathbb{R}^n\rightarrow \mathbb{R}$ be a locally Lipschitz definable function. There exist $\alpha_1,\hdots,\alpha_m>0$ such that for any constant step size $\alpha \in (0,\infty) \setminus \{\alpha_1,\hdots,a_m\}$ and for any null set $S\subset \mathbb{R}^n$, there exists a null subset $I_\alpha \subset \mathbb{R}^n$ such that, for every initial point $x_0 \in \mathbb{R}^n \setminus I_\alpha$, none of the iterates $x_0,x_1,x_2,\hdots$ of the subgradient method belong to $S$.
\end{lemma}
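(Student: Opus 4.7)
The plan is to restrict to the open dense subset of $\mathbb{R}^n$ on which $f$ is $C^2$, where the subgradient iteration collapses to a single-valued $C^1$ map $T_\alpha$, identify a finite set of exceptional step sizes for which $DT_\alpha$ fails to be generically invertible, and then use the area formula to propagate null-avoidance along all iterates.

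First I would apply the $C^p$ cell decomposition theorem \cite[Chapter 7]{van1998tame} with $p=2$ to produce a definable open set $U \subseteq \mathbb{R}^n$ with $\mathbb{R}^n \setminus U$ of dimension $<n$ (hence Lebesgue null) on which $f$ is $C^2$. On $U$, $\partial f(x) = \{\nabla f(x)\}$ by \cref{prop:clarke}, so the subgradient update becomes the single-valued $C^1$ map $T_\alpha(x) := x - \alpha \nabla f(x)$ with Jacobian $DT_\alpha(x) = I - \alpha \nabla^2 f(x)$. To extract the exceptional step sizes, consider the definable set
\begin{equation*}
Z := \{(x,\alpha) \in U \times (0,\infty) : \det(I - \alpha \nabla^2 f(x)) = 0\}.
\end{equation*}
For each fixed $x \in U$, $\alpha \mapsto \det(I - \alpha \nabla^2 f(x))$ is a polynomial of degree at most $n$ with value $1$ at $\alpha = 0$, so its zero set is finite; hence every fiber of the projection $Z \to U$ is $0$-dimensional, forcing $\dim Z \leq n$. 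Decomposing $Z$ into finitely many cells and projecting each onto the $\alpha$-axis, the slice $Z_\alpha := \{x \in U : (x,\alpha) \in Z\}$ has positive $n$-dimensional measure for only finitely many $\alpha$ (those arising from $n$-dimensional cells whose $\alpha$-projection is a singleton, together with the finitely many values at which fiber dimension jumps on cells projecting to intervals); call this finite list $\alpha_1, \ldots, \alpha_m$.

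For $\alpha \in (0,\infty) \setminus \{\alpha_1,\ldots,\alpha_m\}$, $\det DT_\alpha \neq 0$ Lebesgue-a.e.\ on $U$. The area formula for locally Lipschitz maps then shows that $T_\alpha^{-1}(N)$ is null for every null $N \subseteq \mathbb{R}^n$: setting $A := T_\alpha^{-1}(N)$, one has $\int_A |\det DT_\alpha|\,dx = \int_{\mathbb{R}^n} \mathrm{card}(A \cap T_\alpha^{-1}(\{y\}))\,dy = 0$ since the integrand is supported on $N$, and combined with $\det DT_\alpha \neq 0$ a.e.\ this forces $|A| = 0$. Given any null set $S$, define recursively $I_\alpha^{(0)} := S \cup (\mathbb{R}^n \setminus U)$ and $I_\alpha^{(k+1)} := I_\alpha^{(k)} \cup T_\alpha^{-1}(I_\alpha^{(k)} \cap U)$; each $I_\alpha^{(k)}$ is null by induction, hence so is $I_\alpha := \bigcup_{k \geq 0} I_\alpha^{(k)}$. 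For $x_0 \notin I_\alpha$, a straightforward induction on $k$ shows that every iterate is uniquely determined and belongs to $U \setminus S$. The main obstacle is the dimensional bookkeeping in the second paragraph: promoting the set of exceptional step sizes from merely null (which Fubini would give gratis) to genuinely finite relies on o-minimal cell decomposition together with the fact that every $\alpha$-fiber of $Z$ is $0$-dimensional.
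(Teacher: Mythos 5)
Your argument is correct in its essential structure but diverges from the paper's proof in a genuinely interesting way, and it also has one fixable slip.

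Where you differ: the paper identifies the finite exceptional set $\{\alpha_1,\ldots,\alpha_m\}$ by passing to the eigenvalue functions $\lambda_i(x)$ of $\nabla^2 f(x)$, applying a second cell decomposition to make them $C^1$ on a smaller open dense set, invoking the definable Morse--Sard theorem to get finitely many critical values for each $\lambda_i$, and then using the regular level set theorem to show $\{x: \lambda_i(x)=1/\alpha\}$ is null whenever $1/\alpha$ is a regular value. You instead package the whole problem into the definable incidence set $Z=\{(x,\alpha): \det(I-\alpha\nabla^2 f(x))=0\}$, bound $\dim Z\leqslant n$ by noting that each $x$-fiber is a zero set of a nonzero polynomial in $\alpha$, and then apply the definable fiber-dimension formula a second time to conclude that the set of $\alpha$ with $\dim Z_\alpha=n$ is zero-dimensional, hence finite. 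This is a cleaner route: it avoids eigenvalue functions, the second cell decomposition, Morse--Sard, and the level-set theorem entirely, replacing them with a single (double) application of the fiber formula. What the paper's route buys in exchange is an explicit description of the exceptional step sizes (reciprocals of critical eigenvalue values), whereas yours delivers bare finiteness. Your use of the area formula in place of the paper's local-diffeomorphism argument on $\Omega\setminus K_\alpha$ is a cosmetic repackaging of the same idea.

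One slip worth fixing: your recursion $I_\alpha^{(k+1)} := I_\alpha^{(k)} \cup T_\alpha^{-1}(I_\alpha^{(k)}\cap U)$ does not absorb $T_\alpha^{-1}(\mathbb{R}^n\setminus U)$, because $(\mathbb{R}^n\setminus U)\cap U=\emptyset$. Consequently a point $x_0\in U\setminus S$ with $T_\alpha(x_0)\notin U$ can escape every $I_\alpha^{(k)}$, yet at $x_1$ the subdifferential may be genuinely set-valued and your induction ``every iterate belongs to $U\setminus S$'' breaks at $k=1$. The fix is to drop the ``$\cap U$'': set $I_\alpha^{(k+1)} := I_\alpha^{(k)} \cup T_\alpha^{-1}(I_\alpha^{(k)})$ where, since $T_\alpha$ is only defined on $U$, $T_\alpha^{-1}(A):=\{x\in U: T_\alpha(x)\in A\}$. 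With this correction the set $T_\alpha^{-1}(\mathbb{R}^n\setminus U)\subset I_\alpha^{(1)}$, and your induction and measure bounds go through. (The paper sidesteps this by working directly with the set-valued map $G_\alpha(x)=x-\alpha\partial f(x)$ on all of $\mathbb{R}^n$ and iterating the preimage $G_\alpha^{-1}(S)=\{x:G_\alpha(x)\cap S\neq\emptyset\}$, which automatically handles the set-valued region.)
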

\begin{proof}
	Consider the set-valued mapping $G_\alpha:\mathbb{R}^n \rightrightarrows \mathbb{R}^n$ defined by $G_\alpha(x) := x - \alpha \partial f(x)$ for some $\alpha>0$, as well as its preimage $G_\alpha^{-1}(S) := \{ x \in \mathbb{R}^n : G_\alpha(x) \cap S \neq \emptyset \}$. We may then view $I_\alpha$ as the union of $G^{-k}_\alpha(S)$ over all $k \in \mathbb{N}$, where $G_\alpha^{-k}(S) := G_\alpha^{-1}(\hdots G_\alpha^{-1}(S) \hdots )$ is obtained by taking $k$ times the preimage of $S$. The Lebesgue measure of $I_\alpha$ is thus bounded above by the sum of the Lebesgue measures of $G^{-k}_\alpha(S)$. Below, we will show that they simultaneously have zero Lebesgue measure for all but finitely many $\alpha$.
	
	By the cell decomposition theorem \cite[(2.11) p. 52]{van1998tame}, there exists a definable open dense subset $\Omega$ of $\mathbb{R}^n$ such that $f$ is twice continuously differentiable on $\Omega$.	  Denote by $\lambda_{i}:\Omega \rightarrow \mathbb{R}$ the function that maps any $x\in \Omega$ to the $i$th largest eigenvalue of $\nabla^2 f(x)$ for $i = 1,\ldots,n$. Since $\lambda_1,\ldots,\lambda_n$ are definable, again by the cell decomposition theorem, they are continuously differentiable on a definable open dense subset of $\Omega$ (again denoted by $\Omega$). 
	
	Let $g_\alpha:\Omega\rightarrow \mathbb{R}^n$ be the restriction of $G_\alpha$ on $\Omega$ (it is single-valued by continuous differentiability of $f$ on $\Omega$).  We next show that for all but finitely many step size $\alpha$, it holds that if $S\subset \mathbb{R}^n$ is a null set, then so is $g_{\alpha}^{-1}(S)$, following arguments similar to \cite[Claim 3]{bolte2020mathematical}. Indeed, by the definable Morse-Sard theorem \cite[Corollary 9]{bolte2007clarke}, $\lambda_i$ has finitely many critical values. Thus, $F:=\{\alpha >0: 1/\alpha\text{ is a critical value of $\lambda_i$ for some $i$}\}$ is finite. Let $\alpha_1,\ldots,\alpha_m$ be the elements of $F$. Fix any $\alpha \in (0,\infty)\setminus \{\alpha_1,\ldots,\alpha_m\}$ and any null subset $S$ of $\mathbb{R}^n$. We next show that $g_\alpha^{-1}(S)$ is null. Consider
	\begin{subequations}
		\begin{align*}
			K_\alpha :&= \left\{x\in \Omega: g'_\alpha(x)\text{ is not invertible}\right\}\\
			&=\left\{x\in \Omega: \mathrm{det}\left(I -\alpha \nabla^2 f(x) \right)= 0\right\}\\
			&=\bigcup_{i = 1}^n \left\{x\in \Omega: 1-\alpha\lambda_i(x) = 0\right\},
		\end{align*}
	\end{subequations}
	where $I$ is the identity matrix of order $n$ and the last equality follows from the diagonalization of $g'_\alpha(x)$. As $1/\alpha$ is not a critical value of $\lambda_i$, $\left\{x\in \Omega: 1-\alpha\lambda_i(x) = 0\right\}$ is null for $i = 1,\ldots,n$, according to the regular level set theorem \cite[Corollary 5.14]{lee2013smooth}. Thus, $K_\alpha$ is a null subset of $\Omega$. Note that $\Omega\setminus K_\alpha$ is open in $\Omega$ (and thus in $\mathbb{R}^n$) by continuity of $g'_\alpha$. Therefore, $g_\alpha^{-1}(S)\cap (\Omega\setminus K_\alpha)$ is null, as $g_\alpha$ is a diffeomorphism when restricted to $\Omega\setminus K_\alpha$. It follows that $g_\alpha^{-1}(S) = (g_\alpha^{-1}(S)\cap K_\alpha) \cup (g_\alpha^{-1}(S)\cap (\Omega\setminus K_\alpha)) \subset K_\alpha \cup  (g_\alpha^{-1}(S)\cap (\Omega\setminus K_\alpha))$ is null.

	  This property continues to hold without restricting the domain of $G_\alpha$ on $\Omega$. Indeed, for any null set $S\subset \mathbb{R}^n$, $G_\alpha^{-1}(S)=  (G_\alpha^{-1}(S)\cap \Omega) \cup (G_\alpha^{-1}(S)\cap(\mathbb{R}^n\setminus \Omega))\subset g_\alpha^{-1}(S)\cup (\mathbb{R}^n\setminus \Omega)$ is null. By induction, so are $G_\alpha^{-1}(S)$, $G_\alpha^{-2}(S), \hdots$ This concludes the proof.
\end{proof}

We are now ready to state the main theorem of this chapter, which provides sufficient conditions for strong instability.

\begin{theorem}
\label{thm:suff_unstable}
Let $f:\mathbb{R}^n\rightarrow \mathbb{R}$ be a locally Lipschitz tame function whose set of critical points we denote by $S$. Assume that $S$ is a $C^2$ manifold at some $x^*\in S$ of dimension less than $n$. Assume that there exist $\theta_1 \geqslant 0$, a neighborhood $U$ of $x^*$, and a continuous function $C:\mathbb{R}^n \rightarrow \mathbb{R}$ such that for all $\alpha>0$, there exist $c_1>0$ such that for any sequence $x_0,x_1,\hdots \in U \setminus S$ generated by the subgradient method with constant step size $\alpha$, we have $C(x_{k+1}) -  C(x_k) \geqslant c_1 d(x_k,S)^{\theta_1}$ for all $k\in \mathbb{N}$. The point $x^*$ is strongly unstable if 1) $\theta_1 = 0$ or 2) $\partial f$ is metrically $\theta_2$-subregular at $(x^*,0)$ with $\theta_2>1$ and $f$ satisfies the Verdier condition at $x^*$ along $S$.
\end{theorem}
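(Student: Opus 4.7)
The plan is to argue by contradiction. Suppose $x^*$ is not strongly unstable; then for every $\epsilon>0$ there are infinitely many step sizes $\alpha>0$ and a positive-measure set of initial points $x_0\in B(x^*,\epsilon)$ for which some realization of the subgradient method remains in $B(x^*,\epsilon)$ forever. I would choose $\epsilon>0$ small enough that $B(x^*,\epsilon)\subset U$ (the Chetaev neighborhood) and that the Verdier condition, metric $\theta_2$-subregularity, and a local $C^2$ graph parameterization of $S$ around $x^*$ are all valid there. Since $S$ has dimension strictly less than $n$ and $f$ is tame (so its restriction to a bounded set is definable), \cref{lemma:avoidance} excludes, for all but finitely many $\alpha$, a further null set of initial points and ensures that no iterate ever enters $S$. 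Intersecting with the assumed positive-measure set still gives initial points whose chosen realization satisfies $\{x_k\}_{k\in\mathbb{N}}\subset B(x^*,\epsilon)\setminus S\subset U\setminus S$, so the Chetaev inequality applies at every step.

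Summing $C(x_{k+1})-C(x_k)\geqslant c_1 d(x_k,S)^{\theta_1}$ and using that the continuous $C$ is bounded on the compact $B(x^*,\epsilon)$ yields $\sum_{k=0}^{\infty} d(x_k,S)^{\theta_1}<\infty$. Under hypothesis (1), $\theta_1=0$ reduces this to $\sum_k 1<\infty$, the desired contradiction. Under hypothesis (2), if $\theta_1>0$ the finiteness forces $r_k:=d(x_k,S)\to 0$, and the remainder of the argument exploits metric subregularity and the Verdier condition to contradict this.

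Since $S$ is the critical set of $f$ and the definable Morse--Sard theorem \cite[Corollary 9]{bolte2007clarke} leaves only finitely many critical values, the manifold hypothesis (and hence local connectedness of $S$ at $x^*$) ensures that $f$ is locally constant on $S$ near $x^*$, so $\nabla_S f\equiv 0$ there. Let $y_k=P_S(x_k)$, $u_k=(x_k-y_k)/r_k\in N_S(y_k)$, and decompose the subgradient $s_k\in\partial f(x_k)$ used at step $k$ as $s_k=s_k^T+s_k^N$ along $T_S(y_k)\oplus N_S(y_k)$. The Verdier condition yields $\|s_k^T\|=\|P_{T_S(y_k)}(s_k)-\nabla_S f(y_k)\|\leqslant c_V r_k$, while metric $\theta_2$-subregularity yields $\|s_k\|\geqslant d(0,\partial f(x_k))\geqslant c_M r_k^{1/\theta_2}$. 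Since $\theta_2>1$, $r_k^{1/\theta_2}$ dominates $r_k$ as $r_k\to 0$, so $\|s_k^N\|^2=\|s_k\|^2-\|s_k^T\|^2\geqslant c_N^2 r_k^{2/\theta_2}$ for some $c_N>0$ and all sufficiently small $r_k$.

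The update decomposes as $x_{k+1}-y_k=(r_k u_k-\alpha s_k^N)+(-\alpha s_k^T)\in N_S(y_k)\oplus T_S(y_k)$, with normal part of norm at least $\alpha\|s_k^N\|-r_k\geqslant (c_N\alpha/2)r_k^{1/\theta_2}$ and tangential part of norm at most $\alpha c_V r_k$, both valid once $r_k$ is small enough. Locally, $S$ is the graph of a $C^2$ map $h\colon T_S(y_k)\to N_S(y_k)$ with $h(0)=0$, $h'(0)=0$, and $\|h(t)\|\leqslant K\|t\|^2$; a routine calculation --- which I would isolate as a lemma --- then shows that whenever $x-y_k=a+b$ with $a\in T_S(y_k)$, $b\in N_S(y_k)$ and $\|a\|^2$ is small compared to $\|b\|$, one has $d(x,S)\geqslant \|b\|/2$. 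The required smallness reduces to $r_k^{2-1/\theta_2}\leqslant C/\alpha$, which holds for $r_k$ small since $\theta_2>1/2$, giving $r_{k+1}\geqslant (c_N\alpha/4) r_k^{1/\theta_2}$. As $1/\theta_2<1$, the right-hand side exceeds $r_k$ as soon as $r_k<(c_N\alpha/4)^{\theta_2/(\theta_2-1)}$; once $r_k$ drops below this threshold the sequence $(r_k)$ becomes strictly increasing, so $\liminf_k r_k\geqslant r_{k_0}>0$ for some $k_0$, contradicting $r_k\to 0$. The main obstacle will be making the geometric estimate $d(x,S)\geqslant\|b\|/2$ quantitative and uniform in $y_k$ over a neighborhood of $x^*$ in $S$ using the $C^2$ structure.
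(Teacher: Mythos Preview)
Your proof is correct and follows the paper's strategy: argue by contradiction, use \cref{lemma:avoidance} to keep iterates off $S$, sum the Chetaev inequality, dispose of $\theta_1=0$ directly, and for $\theta_1>0$ show that $r_k=d(x_k,S)\to 0$ is impossible by proving $r_{k+1}\geqslant r_k$ once $r_k$ is small (combining subregularity, which forces $\|s_k\|\gtrsim r_k^{1/\theta_2}$, with the Verdier bound $\|s_k^T\|\lesssim r_k$).

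The one genuine difference is how the lower bound $r_{k+1}\gtrsim r_k^{1/\theta_2}$ is extracted. You write $x_{k+1}-y_k=a+b$ in $T_S(y_k)\oplus N_S(y_k)$ and appeal to a $C^2$ graph lemma of the form $d(\cdot,S)\geqslant\|b\|/2$. The paper instead exploits the prox-regularity of $S$ (a consequence of the $C^2$ assumption): the projection $P_S$ is single-valued and Lipschitz on $B(x^*,2\epsilon)$ with $P_S=(I+N_S^c)^{-1}$, from which one checks $P_S(x_k-\alpha v_k^N)=P_S(x_k)$, hence $\|P_S(x_k)-P_S(x_{k+1})\|\leqslant L\alpha\|v_k^T\|\leqslant L\alpha c_3 r_k$ and then $r_{k+1}\geqslant\alpha\|v_k\|-\|x_k-P_S(x_{k+1})\|\geqslant\alpha c_2^{-1/\theta_2}r_k^{1/\theta_2}-(1+L\alpha c_3)r_k$. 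This bypasses your uniformity-in-$y_k$ obstacle entirely, since the Lipschitz constant of $P_S$ is fixed once and for all on a single ball. Your graph route also works, but note that the lemma needs a second hypothesis beyond ``$\|a\|^2$ small compared to $\|b\|$'': one must also have $\|b\|$ small (so that the quadratic remainder $K\|b\|^2$ is dominated by $\|b\|$), which is available here because $x_{k+1}\in B(x^*,\epsilon)$ forces $\|b\|\leqslant\|x_{k+1}-y_k\|\leqslant 3\epsilon$.
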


\begin{proof} We begin with an outline of the proof. In order to establish instability, we reason by contradiction and assume that the iterates of the subgradient method remain in a neighborhood of a fixed critical point. We show that this implies that the function $C$ becomes unbounded along the iterates, which is impossible since this function is continuous. The key to showing unboundedness is to prove divergence of a series whose terms depend on the distance of the iterates to the manifold of critical points. For the proof to work, this distance should be positive for all iterates. We hence begin the proof by ensuring that this holds almost surely, using \cref{lemma:avoidance}. After treating an easy case, the majority of the proof is devoted to showing that the distance to the set of critical points does not converge to zero.

We seek to show that there exists $\epsilon>0$ such that for all but finitely many constant step sizes $\alpha>0$, there exists a null subset $I_\alpha \subset \mathbb{R}^n$ such that for every initial point $x_0 \in B(x^*,\epsilon) \setminus I_\alpha$, at least one of the iterates of the subgradient method does not belong to $B(x^*,\epsilon)$. Since $S$ is a $C^2$ manifold at $x^*$ of dimension less than $n$, we have that $S\cap U$ is a null set after possibly reducing $U$. As we assume the iterates remain in $U$, we may assume that $f$ is definable (by replacing $f$ with a definable extension of $f_{|U}$). By \cref{lemma:avoidance}, there exist $\alpha_1,\hdots,\alpha_m>0$ such that for any constant step size $\alpha \in (0,\infty) \setminus \{\alpha_1,\hdots,a_m\}$, there exists a null subset $I_\alpha \subset \mathbb{R}^n$ such that, for every initial point $x_0 \in \mathbb{R}^n \setminus I_\alpha$, none of the iterates $x_0,x_1,x_2,\hdots$ of the subgradient method belong to the null set $S\cap U$.

\underline{Case 1: Assume that $\theta_1 = 0$.} Let $\epsilon >0$ such that $B(x^*,\epsilon) \subset U$. Let $\alpha \in (0,\infty) \setminus \{\alpha_1,\hdots,a_m\}$ and consider a sequence of iterates $x_0,x_1,x_2,\hdots \in \mathbb{R}^n$ of the subgradient method with constant step size $\alpha$ such that $x_0 \in B(x^*,\epsilon) \setminus I_\alpha$. We reason by contradiction and assume that $x_k \in B(x^*,\epsilon)$ for all $k \in \mathbb{N}$. Thus $x_k \notin S$ for all $k \in \mathbb{N}$. We have $C(x_{k+1}) - C(x_k) \geqslant c_1 d(x_{k},S)^{\theta_1}$ and
\begin{equation}
\label{eq:diverge}
    C(x_K) - C(x_0) = \sum\limits_{k = 0}^{K-1} C(x_{k+1}) - C(x_k) \geqslant \sum\limits_{k = 0}^{K-1} c_1 d(x_{k},S)^{\theta_1} = \sum\limits_{k = 0}^{K-1} c_1,
\end{equation}
which converges to $+\infty$ as $K$ converges to $+\infty$. Since $C$ is continuous and $x_K \in B(x^*,\epsilon)$, this yields a contradiction.

\underline{Case 2: Assume that $\theta_1>0$.} We proceed in four steps. We begin by choosing $\epsilon>0$ sufficiently small so that the objective function admits favorable geometric properties in $B(x^*,2\epsilon)$ (step 1). We then use these properties, including metric $\theta_2$-subregularity, to show that $d(x_{k+1},S) \geqslant d(x_k,S)$ whenever $d(x_k,S)$ is small enough (step 2). This prevents $d(x_k,S)$ from converging to zero. Similar to \eqref{eq:diverge}, this leads to a divergent series $\sum_{k = 0}^{\infty} c_1 d(x_{k},S)^{\theta_1}$ and hence to a contradiction. A computation reveals that proving the inequality on the distances reduces to showing that a certain ratio is bounded (step 3), at which point we invoke the Verdier condition. This in turn requires showing that the projection 
is preserved when taking a step of a slight modification of the subgradient method (step 4). 

\underline{Step 1} We begin by choosing $\epsilon>0$ such that the projection $P_S$ onto $S$ is Lipschitz continuous and identifies on $B(x^*,2\epsilon)$ with the preimage of a mapping related to the normal cone $N_S(x)$, among other properties.

Since $S$ is a $C^2$ manifold at $x^*$
, $S\cap U$ is strongly amenable \cite[10.23 Definition (b)]{rockafellar2009variational} after possibly reducing $U$. It follows that $S\cap U$ is prox-regular \cite[13.31 Exercise, 13.32 Proposition]{rockafellar2009variational} and locally closed \cite[p. 28]{rockafellar2009variational}. Therefore, there exists a closed neighborhood $V \subset U$ of $x^*$ such that $S\cap V$ is closed and prox-regular at $x^*$. By \cite[Theorem 1.3 (j)]{poliquin2000local}, there exists $\epsilon>0$ such that the projection $P_{S\cap V}$ onto $S\cap V$ is single-valued and Lipschitz continuous with some constant $L>0$ on $B(x^*,2\epsilon)$. After possibly reducing $\epsilon>0$, we have $P_{S\cap V}(x) = P_S(x)$ for all $x\in B(x^*,2\epsilon)$. (Indeed, if $B(x^*,5\epsilon) \subset V$, then $\|x-y\|\geqslant 3\epsilon$ for all $y \in S \setminus V$ while $\|x-x^*\|\leqslant 2\epsilon$.) Again by \cite[Theorem 1.3 (j)]{poliquin2000local}, there exists $c>0$ such that $P_S(x) = (I+N_S^c)^{-1}(x)$ for all $x \in B(x^*,2\epsilon)$, where $N_S^c$ is a set-valued mapping defined from $\mathbb{R}^n$ to the subsets of $\mathbb{R}^n$ by
\begin{equation}
\label{eq:N_S^c}
     N_S^c(x) := \left\{\begin{array}{ll}
         N_S(x) \cap \mathring{B}(0,c) & \text{if $x\in S$}, \\
         \emptyset & \text{else}.
    \end{array}\right.
\end{equation}
After possibly reducing $\epsilon$, we may assume that $(2+L)\epsilon < c$. 

In the following, we further reduce $\epsilon$ whenever necessary.
Since $\partial f$ is metrically $\theta_2$-subregular at $(x^*,0)$, there exists $c_2>0$ such that $d(x,S) \leqslant c_2 d(0,\partial f(x))^{\theta_2}$ for all $x \in B(x^*,\epsilon)$. Since $f$ satisfies the Verdier condition at $x^*$ along $S$, there exists $c_3>0$ such that for all $y \in B(x^*,2\epsilon)\cap S$, $x \in B(x^*,2\epsilon) \setminus S$ and $v \in \partial f(x)$, we have $\left\|P_{T_{
S}(y)}(v)\right\| \leqslant c_3\|x-y\|$. Indeed, $\nabla_S f(y) = 0$ for all $y\in B(x^*,2\epsilon) \cap S$ because $f$ agrees with a constant function along $S$ around $x^*$ by the definable Morse-Sard theorem \cite[Corollary 9]{bolte2007clarke}.

\underline{Step 2} Having chosen $\epsilon>0$, let $\alpha \in (0,\infty) \setminus \{\alpha_1,\hdots,a_m\}$. Consider a sequence of iterates $x_0,x_1,x_2,\hdots$ of the subgradient method with constant step size $\alpha$ such that $x_0 \in B(x^*,\epsilon) \setminus I_\alpha$. As in the case when $\theta_1 = 0$, we reason by contradiction and assume that $x_k \in B(x^*,\epsilon)$ for all $k \in \mathbb{N}$. Thus $x_k \notin S$ for all $k \in \mathbb{N}$. Also, for all $K \in \mathbb{N}$, we have $C(x_K) - C(x_0) \geqslant \sum_{k=0}^{K-1} c_1 d(x_{k},S)^{\theta_1}$. In order to show that $\sum_{k=0}^{K-1} c_1 d(x_{k},S)^{\theta_1}$ diverges, it suffices to show that $d(x_k,S)$ does not converge to zero. To this end, we next show that $d(x_{k+1},S) \geqslant d(x_k,S)$ whenever $d(x_k,S)>0$ is sufficiently small (it is non-zero because $x_k \notin S$ and $S$ is locally closed).

\begin{figure}[ht]
\centering
    \begin{tikzpicture}[scale=2.4]
\fill[gray!50] (5.5,.9) circle (38pt); 
    \draw[line width=.2mm,dashed] (5.5,.9) circle (38pt);
    \fill (3.15,0.85) node{\normalsize $S$};
    \draw[line width=.2mm] plot [smooth,tension=0.689] coordinates {(3.2,0.6)(3.5,1.0)(4.17,0.95)};
    \draw[magenta,line width=.4mm, name path=A] plot [smooth,tension=0.689] coordinates {(4.17,0.95) (5.5,.9) (6.57,1.7)};
    \draw[line width=.2mm] plot [smooth,tension=0.689] coordinates {(6.57,1.7)(6.8, 2)(7,2.5)};
    \draw[magenta,->] (6.55,2.1)-- (6.4,1.6);
    \fill (6.55,2.2)  node {\normalsize $\textcolor{magenta}{S\cap U}$};

    \fill (4.25,1.8)  node {\normalsize $U$};
    \filldraw (5.5,.9) circle (.3pt);
    \fill (5.54,1.01)  node {\normalsize $x^*$};

     \filldraw (5.622,0.945) circle (.3pt);
    \fill (5.5,0.65)  node {\small $P_S(x_{k+1})$};
     \draw[->](5.522,0.695)--(5.61,0.895);
     \filldraw (5.8,1.04) circle (.3pt);
     \fill (5.9,1.39)  node {\small $P_S(x_k)$};
     \draw[->](5.89,1.305)--(5.82,1.105);
     \draw[blue,opacity=0.5,line width=.4mm] plot [smooth,tension=1] coordinates {(5,2.4)(6.5,-0.15)};
     \fill (5.5,2.4)  node {\normalsize $\textcolor{blue}{N_k + x_k}$};
     \draw[blue,->] (5.5,2.3)-- (5.3,2);
     \draw[blue,opacity=0.5,line width=.4mm] plot [smooth,tension=1] coordinates {(4.5,-0.1824)(7,1.2882)};
     \fill (4,0) node {\normalsize $\textcolor{blue}{T_k + x_k}$};
     \draw[blue,->] (4.3,0)-- (4.7,0);
       \draw[line width=.4mm,violet,->] (6,.7)-- (6.4,.16);
       \fill (6.34,.48)  node {\normalsize $\textcolor{violet}{\boldsymbol{v_k}}$}; 
       \draw[yellow,dashed,line width=.2mm] plot [smooth, tension=1] coordinates {(5.3,1.6450)(6,.7)};
       
    \filldraw[yellow] (6,.7) circle (.5pt);
      \fill (5.96,.54)  node {\normalsize $x_k$};
       \filldraw[yellow] (5.3,1.6450) circle (.5pt);
       \fill (5.1,1.6)  node {\normalsize $x_{k+1}$};
    \end{tikzpicture}
\caption{Illustration of $d(x_{k+1},S) \geqslant d(x_k,S)$ for $d(x_k,S)$ sufficiently small.}
\label{fig:chetaev}
\end{figure}
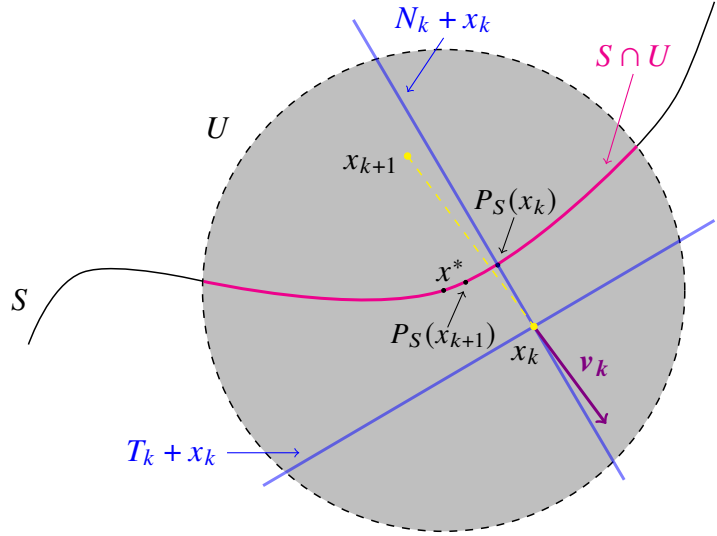

Since $(x_k)_{k\in \mathbb{N}}$ is generated by the subgradient method with constant step size $\alpha$, for all $k \in \mathbb{N}$ there exists $v_k \in \partial f(x_k)$ such that $x_{k+1} = x_k - \alpha v_k$. 
As illustrated in Figure \ref{fig:chetaev}, we have
\begin{subequations}
    \begin{align}
        d(x_{k+1},S) & = \|x_{k+1} - P_S(x_{k+1})\| \label{dist_a} \\
        & = \|x_k - P_S(x_{k+1}) - \alpha v_k\| \label{dist_b} \\
        & \geqslant \alpha \|v_k\| - \|x_k-P_S(x_{k+1})\| \label{dist_c} \\
        & \geqslant \alpha c_2^{-1/\theta_2} d(x_k,S)^{1/\theta_2}- \|x_k-P_S(x_{k+1})\| \label{dist_d} \\
        & = d(x_k,S)\left(\alpha c_2^{-1/\theta_2} d(x_k,S)^{1/\theta_2-1} - \frac{\|x_k-P_S(x_{k+1})\|}{d(x_k,S)}\right) \label{dist_e}\\
        & \geqslant d(x_k,S) \label{dist_f}
    \end{align}
\end{subequations}
provided that $d(x_k,S)$ sufficiently small and that $\|x_k-P_S(x_{k+1})\|/d(x_k,S)$ is upper bounded on $B(x^*,\epsilon) \setminus S$ if $d(x_k,S)$ sufficiently small. Indeed, in \eqref{dist_a} $P_S(x_{k+1})$ is a singleton because $x_{k+1} \in B(x^*,\epsilon)$. \eqref{dist_b}-\eqref{dist_c} are deduced from the update rule and the triangular inequality. \eqref{dist_d} follows from the metric $\theta_2$-subregularity of $\partial f$ at $x^*$ for $0$. In the second factor of \eqref{dist_e}, the first term $c_2^{-1/\theta_2} d(x_k,S)^{1/\theta_2-1}$ diverges as $d(x_k,S)$ nears zero because $1/\theta_2-1< 0$. Hence, if the second term $\|x_k-P_S(x_{k+1})\|/d(x_k,S)$ is bounded, then the lower bound \eqref{dist_f} holds. 

\underline{Step 3} We next focus on proving that $\|x_k-P_S(x_{k+1})\|/d(x_k,S)$ is bounded. Since $x_k \in B(x^*, \epsilon)\setminus S$, $P_S(x_k) \in B(x^*,2\epsilon)\cap S$, and $v_k \in \partial f(x_k)$, by the Verdier condition we have $\|P_{T_S(P_S(x_k))}(v_k)\| \leqslant c_3 \|x_k - P_S(x_k)\|$ for all $k\in \mathbb{N}$. For notational convenience, let $T_k:= T_S(P_{S}(x_k))$ and $N_k:= N_S(P_{S}(x_k))$ respectively be the tangent and normal cones of $S$ at $P_S(x_k)$. Also, let $v^T_k := P_{T_k}(v_k)$ and $v^N_k := P_{N_k}(v_k)$ respectively be the projections of $v_k$ on $T_k$ and $N_k$. With these notations, we have $\|v_k^T\| \leqslant c_3\|x_k - P_{S}(x_k)\|$ for all $k\in \mathbb{N}$. Observe that
\begin{subequations}
    \begin{align}
        \frac{\|x_k-P_S(x_{k+1})\|}{d(x_k,S)} & \leqslant \frac{\|x_k-P_S(x_k)\| + \|P_S(x_k)-P_S(x_{k+1})\|}{d(x_k,S)} \label{proj_a} \\
        & = 1 + \frac{\|P_S(x_k)-P_S(x_{k+1})\|}{\|x_k-P_S(x_k)\|} \label{proj_b} \\
        & = 1 + \frac{\|P_S(x_k-\alpha v_k^N)-P_S(x_k-\alpha v_k)\|}{\|x_k-P_S(x_k)\|} \label{proj_c} \\
        & \leqslant 1 + L \alpha \frac{\|v_k^N-v_k\|}{\|x_k-P_S(x_k)\|} \label{proj_d} \\
        & \leqslant 1 + L \alpha c_3 \label{proj_e}
    \end{align}
\end{subequations}
provided that $d(x_k,S)$ sufficiently small. Indeed, \eqref{proj_a} follows from the triangular inequality. \eqref{proj_b} holds because $d(x_k,S) = \|x_k-P_S(x_k)\|$.  
\eqref{proj_c} holds because of the update rule $x_{k+1} = x_k-\alpha v_k$ and the fact that $P_S(x_k) = P_S(x_k-\alpha v_k^N)$, which is the object of the next step. \eqref{proj_d} holds because $P_S$ is $L$-Lipschitz continuous in $B(x^*,2\epsilon)$. Finally, \eqref{proj_e} follows from the Verdier condition and the fact that $v_k = v_k^T + v_k^N$.

\underline{Step 4} It remains to prove that $P_S(x_k) = P_S(x_k-\alpha v_k^N)$ when $d(x_k,S)$ is sufficiently small. We may thus assume that $d(x_k,S) \leqslant \epsilon/(\alpha c_3)$, which guarantees that $x_k-\alpha v_k^N \in B(x^*,2\epsilon)$. Indeed, 
\begin{subequations}
    \begin{align}
        \| x_k-\alpha v_k^N - x^*\| & \leqslant \| x_k - \alpha v_k - x^*\| + \alpha \| v_k^T\| \\
        & \leqslant \epsilon + \alpha c_3 \|x_k - P_S(x_k)\| \\
        & \leqslant \epsilon + \alpha c_3 d(x_k,S) \\
        & \leqslant 2\epsilon.
    \end{align}
\end{subequations}
Recall that $P_S(x) = (I+N_S^c)^{-1}(x)$ for all $x \in B(x^*,2\epsilon)$. We have
\begin{subequations}
    \begin{align}
        P_S( x_k-\alpha v_k^N ) & = (I+N_S^c)^{-1}(x_k-\alpha v_k^N) \label{normal_a} \\
        & = (I+N_S^c)^{-1}(P_S(x_k) + x_k - P_S(x_k) - \alpha v_k^N) \label{normal_b} \\
        & = P_S(x_k). \label{normal_c}
    \end{align}
\end{subequations}
Indeed, \eqref{normal_c} is equivalent to $P_S(x_k) + x_k - P_S(x_k) - \alpha v_k^N \in (I+N_S^c)(P_S(x_k))$, that is to say, $x_k - P_S(x_k) - \alpha v_k^N \in N_S^c(P_S(x_k))$. Since $P_S(x_k) \in S$, by definition of $N_S^c$ in \eqref{eq:N_S^c}, $N_S^c(P_S(x_k)) = N_k \cap \mathring{B}(0,c)$. To see why $x_k - P_S(x_k) - \alpha v_k^N \in N_k$, observe that $P_S(x_k) = P_S(x_k - P(x_k) + P(x_k)) = (I+N_S^c)^{-1}(x_k - P(x_k) + P(x_k))$. Thus $x_k - P(x_k) + P(x_k) \in (I+N_S^c)(P_S(x_k))  = P_S(x_k) + N_S^c(P_S(x_k))$, that is to say, $x_k - P(x_k) \in N_k$. Since $N_k$ is a linear subspace, it follows that $x_k - P_S(x_k) - \alpha v_k^N \in N_k$. Finally, 
    \begin{align*}
        \|x_k - P_S(x_k)-\alpha v_k^N\| & \leqslant \|x_k -\alpha v_k^N - x^*\| + \|x^* - P_S(x_k)\|\\
        & \leqslant 2\epsilon +  \|P_S(x^*) - P_S(x_k)\|\\
        & \leqslant 2\epsilon + L\|x^* - x_k\|\\
        & \leqslant (2 + L) \epsilon<c.\qedhere
    \end{align*}
\end{proof}

 Recall that a local minimum $x^* \in \mathbb{R}^n$ of $f$ is spurious if $f(x^*)>\inf\{f(x):x\in \mathbb{R}^n\}$. In \cref{sec:Applications}, Theorem \ref{thm:suff_unstable} will be used to prove instability of spurious local minima in two practical problems; see Propositions \ref{prop:neural_network_unstable} and \ref{prop:rpca_rankr_unstable}. Recall that Lyapunov functions are used in the theory of ordinary differential equations (or inclusions) to prove the stability of an equilibrium point \cite{liapounoff1907probleme}. For example, a locally Lipschitz tame objective function $f$ is a Lyapunov function for the continuous-time subgradient dynamics $x'\in -\partial f(x)$ around a strict local minimum $x^*$ \cite[Theorem 5.16 1.]{sastry2013nonlinear}. Indeed, $f$ is positive around $x^*$ and $f\circ x$ is decreasing along any trajectory $x$. The objective function is however not monotonic along discrete-time dynamics, in which case it ceases to be a Lyapunov function.

In contrast to Lyapunov functions, Chetaev functions are used to prove instability \cite{chetaev1961stability}. By \cite[Theorem 2.14]{braun2021stability}, an equilibrium point of ordinary differential equation is unstable if there exists a continuous function with positive values in any neighborhood of the equilibrium where it is equal to zero, and it is increasing along any trajectory (see also \cite[Theorems 5.29 and 5.30]{sastry2013nonlinear}). Chetaev functions have gained renewed interest recently in the context of obstacle avoidance in control, where one seeks to render the obstacles unstable by feedback \cite[Section III. B.]{braun2019uniting} \cite[Section IV. B.]{braun2018complete}. 
The function $C:\mathbb{R}^n \rightarrow \mathbb{R}$ in Theorem \ref{thm:suff_unstable} plays the role of a Chetaev function in a neighborhood of the point $x^*$. So long as the iterates $x_k$ stay near $x^*$ and avoid the critical points, the Chetaev function values $C(x_k)$ increase
. If the increase is lower bounded by a positive constant at every iteration (i.e., when $\theta_1=0$), then we may readily conclude. Otherwise, the local geometry of the objective function comes into play. 

The fact that the exponent in the metric subregularity of the subdifferential is greater than one prevents the objective function from having a locally Lipschitz gradient if it is differentiable. The Verdier condition characterizes how fast subgradients become normal to the set of critical points in the vicinity of $x^*$. Together, these two conditions ensure that the iterates of the subgradient method do not converge to the set of critical points around $x^*$. Then the values $C(x_k)$ converge to plus infinity if the iterates remain near $x^*$, resulting in instability. In contrast, Bianchi \textit{et al.} \cite[Proposition 4]{bianchi2023stochastic} and Davis \textit{et al.} \cite[Proposition 5.2]{davis2023active} use the Verdier condition to ensure that the projection of the iterates on an active manifold containing a saddle point correspond to an inexact Riemannian gradient method with an implicit retraction. This technique is thus not suitable for proving instability of local minima.

In order to avoid assuming that the inequality $C(x_{k+1}) -  C(x_k) \geqslant c_1 d(x_k,S)^{\theta_1}$ holds for all $k \in \mathbb{N}$ in Theorem \ref{thm:suff_unstable}, one may require the Chetaev function to be convex and $\langle s , s' \rangle \leqslant  -d(x,S)^{\theta_1}$ for all $x\in U \setminus S, s \in \partial C(x),$ and $s' \in \partial f(x)$. Indeed, we then have $C(x_{k+1}) - C(x_k) \geqslant \langle s_k,x_{k+1}-x_k\rangle = \langle s_k,-\alpha s_k'\rangle \geqslant \alpha d(x,S)^{\theta_1}$ where $s_k \in \partial C(x_k)$ and $s_k' \in \partial f(x_k)$. These slightly stronger conditions hold in the first example in the next section.

\section{Applications}
\label{sec:Applications}


In this section, we apply Theorem \ref{thm:suff_unstable} to two examples using the Euclidean inner product. We first show that instability occurs in an example of ReLU neural network with $\ell_1$ loss, namely $(x_1,x_2,x_3)\in \mathbb{R}^3 \mapsto  |x_3 \max\{x_2,0\} - 1| + |x_3 \max\{x_1 + x_2,0\}|$. Indeed, it is the loss function when one seeks to fit the ReLU neural network $(a_1,a_2) \in \mathbb{R}^2 \mapsto x_3 \max\{x_1 a_1 + x_2 a_2,0\}$ over two data points $(0,1)$ and $(1,1)$ with corresponding labels $1$ and $0$. Figure \ref{fig:nn_relative} reveals that the iterates of the subgradient method move away from a fixed spurious local minimum despite being initialized nearby. Five trials are displayed, each corresponding to a uniform choice of constant step size in $[0.05,0.15]$ and a random initial point within $10^{-3}$ relative distance of the local minimum. Figure \ref{fig:nn_lyapunov} shows the corresponding values of an associated Chetaev function $C:\mathbb{R}^3\rightarrow\mathbb{R}$ defined by $C(x_1,x_2,x_3) := 1-x_1$. The fact that this function must increase indefinitely if the iterates remain near the local minimum is at the root of the instability (see Proposition \ref{prop:neural_network_unstable}). Figure \ref{fig:nn_fv} shows that the objective function values eventually stabilize around the global minimum value.

\begin{figure}[ht]
    \centering
     \begin{subfigure}{.49\textwidth}
  \centering
  \includegraphics[width=.95\textwidth]{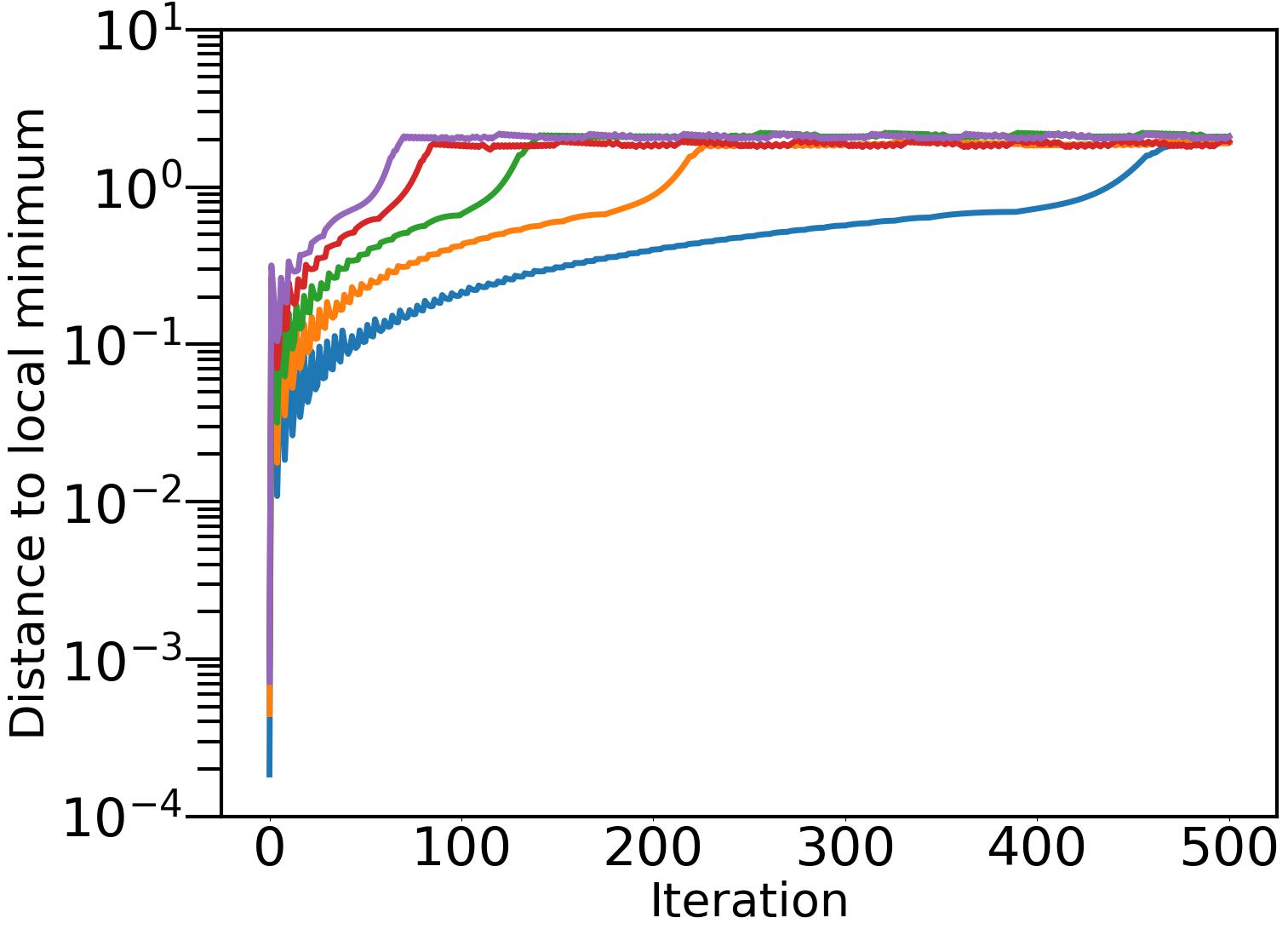}
  \caption{}
  \label{fig:nn_relative}
\vspace*{1mm}
  \label{fig:nn_distance}
 \end{subfigure}
 \begin{subfigure}{.49\textwidth}
  \centering
  \includegraphics[width=.95\textwidth]{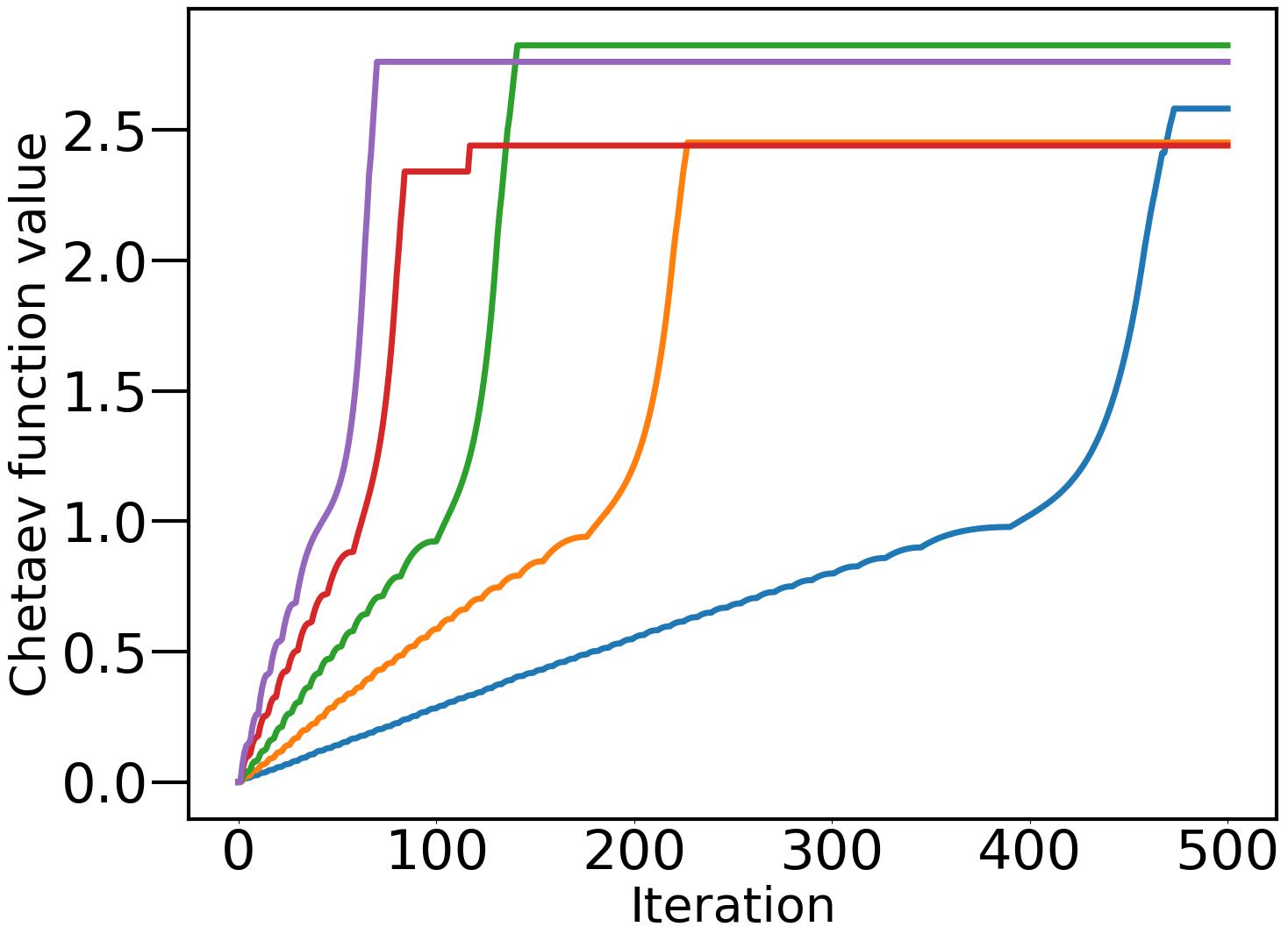}
  \caption{}
  \label{fig:nn_lyapunov}
 \end{subfigure}
  \begin{subfigure}{.49\textwidth}
  \centering
  \includegraphics[width=.95\textwidth]{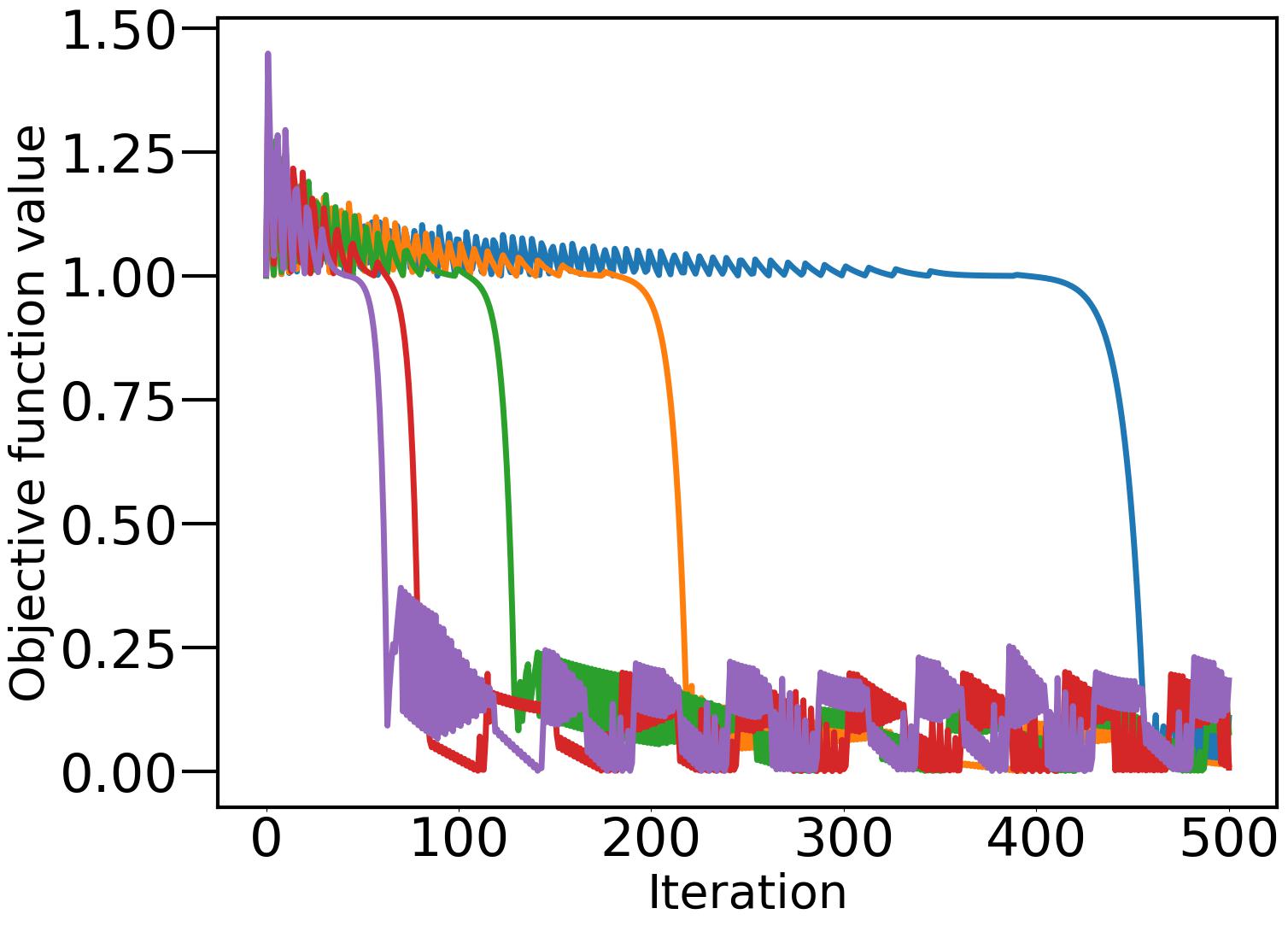}
  \caption{}
  \label{fig:nn_fv}
 \end{subfigure}
    \caption{Subgradient method randomly initialized near a spurious local minimum of a ReLU neural network with $\ell_1$ loss (5 trials with different step sizes).}
     \label{fig:nn}
\end{figure}

\begin{proposition}
\label{prop:neural_network_unstable}
The point $(1,1,0)$ is a strongly unstable spurious local minimum of the function defined from $\mathbb{R}^3$ to $\mathbb{R}$ by $f(x_1,x_2,x_3) :=  |x_3 \max\{x_2,0\} - 1| + |x_3 \max\{x_1 + x_2,0\}|$.
\end{proposition}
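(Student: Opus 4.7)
The plan is to apply \cref{thm:suff_unstable} (case 2) with $x^*=(1,1,0)$ and Chetaev function $C(x_1,x_2,x_3):=1-x_1$. First I would check that $x^*$ is a spurious local minimum. Spuriousness is immediate since $x_2 x_3-1=-1$ and $x_3(x_1+x_2)=0$ at $x^*$ give $f(x^*)=1$, while $f$ attains $0$ e.g. at $(-1,1,1)$ (both pieces vanish). For local optimality, in a small neighborhood of $x^*$ one has $x_2>0$ and $x_1+x_2>0$, so the max-gates are active and $f(x)=|x_2 x_3-1|+|x_3(x_1+x_2)|$; writing $x=(1+\epsilon_1,1+\epsilon_2,\epsilon_3)$ and examining the two cases $\epsilon_3\geqslant 0$ and $\epsilon_3<0$ shows $f(x)=1+\epsilon_3(1+\epsilon_1)\geqslant 1$ or $f(x)=1-\epsilon_3(3+\epsilon_1+2\epsilon_2)\geqslant 1$ respectively, for small perturbations.

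Next I would identify the critical manifold. A direct Clarke subdifferential computation near $x^*$ gives, for $x_3=0$, $\partial f(x)=\{(0,0,-x_2+t(x_1+x_2)):t\in[-1,1]\}$, which contains $0$ as long as $x_1\geqslant 0$; for $x_3\neq 0$ small the subgradient is a singleton with nonzero third entry, so there are no critical points nearby off $\{x_3=0\}$. After shrinking the neighborhood $U$ of $x^*$, the local critical set is $S\cap U=\{x\in U:x_3=0\}$, a $C^\infty$ manifold of dimension $2<3$, and $d(x,S)=|x_3|$.

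The main calculation is the Chetaev inequality. For $x=(x_1,x_2,x_3)\in U\setminus S$ the unique subgradient is $(x_3,0,x_1)$ when $x_3>0$ (from $\text{sign}(x_2x_3-1)=-1$, $\text{sign}(x_3(x_1+x_2))=+1$) and $(-x_3,-2x_3,-x_1-2x_2)$ when $x_3<0$ (both signs flipped on the second term). In either case the first coordinate changes by $-\alpha x_3$ in one subgradient step, so $C(x_{k+1})-C(x_k)=\alpha|x_{k,3}|=\alpha\,d(x_k,S)$. This gives $\theta_1=1$ and $c_1=\alpha>0$. The Verdier condition is then immediate from the same formulas: $f\equiv 1$ on $S\cap U$, so $\nabla_S f\equiv 0$, and for any $y\in S\cap U$, $x\in U\setminus S$, and $v\in\partial f(x)$ one has $\|P_{T_S(y)}(v)\|\leqslant\sqrt{5}\,|x_3|\leqslant\sqrt{5}\,\|x-y\|$ since $T_S(y)=\mathbb{R}^2\times\{0\}$.

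Finally I would verify metric $\theta_2$-subregularity of $\partial f$ at $(x^*,0)$ for some $\theta_2>1$. Near $x^*$ the third component of the unique subgradient at a point with $x_3\neq 0$ is either $x_1\approx 1$ or $-(x_1+2x_2)\approx -3$, so $d(0,\partial f(x))$ is bounded below by a positive constant on $U\setminus S$. Combined with $d(x,S)=|x_3|$ being bounded above on $U$, the inequality $d(x,S)\leqslant c\,d(0,\partial f(x))^{\theta_2}$ holds trivially for any $\theta_2>1$ after possibly shrinking $U$ and enlarging $c$. All hypotheses of \cref{thm:suff_unstable} (case 2) are met, so $x^*$ is strongly unstable. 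The only subtlety to be careful about is handling the two sign regimes $x_3>0$ and $x_3<0$ in the subdifferential calculation; beyond that, because the critical manifold is locally flat and $f$ is smooth on each open half-space $\{\pm x_3>0\}$ near $x^*$, the verifications are direct.
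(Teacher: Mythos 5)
Your proof follows essentially the same route as the paper: same Chetaev function $C(x_1,x_2,x_3)=1-x_1$ with $\theta_1=1$ and $c_1=\alpha$, same identification of the local critical manifold as $\{x_3=0\}$, and the same Verdier estimate $\|P_{T_S(y)}(v)\|\leqslant \sqrt{5}\,|x_3|$. One small point where you are actually more careful than the paper: the paper writes $\theta_2:=1$ for the metric subregularity, which does not literally meet the hypothesis $\theta_2>1$ of \cref{thm:suff_unstable}; your observation that $d(0,\partial f(x))$ is bounded below by a positive constant on $U\setminus S$ (the third component is near $1$ or near $-3$) cleanly delivers $\theta_2$-subregularity for \emph{every} $\theta_2>1$, fixing this gloss.
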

\begin{proof}
There exists a neighborhood $U$ of $(1,1,0)$ such that for all $(x_1,x_2,x_3)\in U$, we have $x_1\geqslant 1/2$, $x_2 \geqslant 1/2$ and $x_2 x_3 < 1$. Thus inside $U$ we have $f(x_1,x_2,x_3) = |x_3 x_2-1|+|x_3(x_1+x_2)| = 1-x_3 x_2 + |x_3|(x_1+x_2) = 1 + x_2(|x_3|-x_3)+x_1|x_3|\geqslant f(1,1,0) > f(-1,1,1)$, with equality in the inequality if and only if $x_3=0$. It follows that $(1,1,0)$ is a spurious local minimum. We next show that it is strongly unstable using Theorem \ref{thm:suff_unstable}. The function $f$ is locally Lipschitz and tame. Let $S$ denote the set of critical points of $f$. By the definable Morse-Sard theorem \cite[Corollary 9]{bolte2007clarke} and by shrinking the neighborhood $U$ if necessary, $S \cap U = \{(x_1,x_2,x_3)\in U : x_3 = 0\}$ is a $C^2$ manifold of dimension $2$ at $(1,1,0)$. Let  $\theta_1 := 1$ and $C:\mathbb{R}^3\rightarrow\mathbb{R}$ be the continuous function defined by $C(x_1,x_2,x_3) := 1 - x_1$. Let $\alpha>0$ and consider a sequence $(x_1^k,x_2^k,x_3^k)_{k \in \mathbb{N}}$ generated by the subgradient method with constant step size $\alpha$ such that $(x_1^k,x_2^k,x_3^k) \in U\setminus S$ for all $k\in \mathbb{N}$. Letting $c_1 :=\alpha$, we have $C(x_1^{k+1},x_2^{k+1},x_3^{k+1}) - C(x_1^{k},x_2^{k},x_3^k) = x_1^k - x_1^{k+1} =  \alpha |x_3^k| = c_1 d((x_1^{k},x_2^{k},x_3^k),S)^{\theta_1}$ for all $k \in \mathbb{N}$. Letting $\theta_2 := 1$, and $c_2:= 1$,
we have $d((x_1,x_2,x_3),S) \leqslant c_2 d(0,\partial f(x_1,x_2,x_3))^{\theta_2}$ for all $(x_1,x_2,x_3) \in U$, so $\partial f$ is metrically $\theta_2$-subregular at $((1,1,0),(0,0,0))$. Finally, letting $c_3 := \sqrt{5}$, for all $(x_1,x_2,x_3) \in  U\setminus S$, $(y_1,y_2,y_3) \in S\cap U$, and $(v_1,v_2,v_3) \in \partial f((x_1,x_2,x_3))$, we have $\|P_{T_S(y_1,y_2,y_3)}(v_1,v_2,v_3) - \nabla_S f(y_1,y_2,y_3)\|^2 = \|(v_1,v_2,0)\|^2 = |x_3|^2 + (|x_3| - x_3)^2 \leqslant 5x_3^2 \leqslant c_3^2\|(x_1,x_2,x_3) - (y_1,y_2,0)\|^2$. Thus $f$ satisfies the Verdier condition at $(1,1,0)$ along $S$.
\end{proof}

Second, we show that instability occurs in an example of robust principal component analysis with real-world data. The objective function $f:\mathbb{R}^{m\times r} \times \mathbb{R}^{n\times r}\rightarrow\mathbb{R}$ is defined by $f(X,Y) := \|XY^T-M\|_1$ \cite[Equation (4)]{gillis2018complexity} where $\|\cdot\|_1$ is the entrywise $\ell_1$-norm of a matrix and $M \in \mathbb{R}^{m\times n}$ is a data matrix. The goal is to decompose $M$ as a low rank matrix plus a sparse matrix. Figure \ref{fig:rpca_relative} reveals that the iterates of the subgradient method move away from a fixed spurious local minimum $(X^*,Y^*)\in \mathbb{R}^{m\times r} \times \mathbb{R}^{n\times r}$ despite being initialized nearby ($(m,n,r)=(62400,3417,10)$ in the experiment). Five trials are displayed, each corresponding to a uniform choice of constant step size in $[0.0000025,0.0000075]$ and a random initial point within $10^{-3}$ relative distance of the local minimum. Figure \ref{fig:rpca_lyapunov} shows the corresponding values of an associated Chetaev function $C:\mathbb{R}^{m\times r} \times \mathbb{R}^{n\times r}\rightarrow\mathbb{R}$ defined by $C(X,Y) := \|X^*\|_F^2-\|Y^*\|_F^2+\|Y\|_F^2-\|X\|_F^2$ where $\|\cdot\|_F$ is the Frobenius norm. The function increases as long as the iterates remain near the local minimum, but ceases to do so once the iterates are far enough. This is sufficient to prove instability (see Proposition \ref{prop:rpca_rankr_unstable}). Figure \ref{fig:rpca_fv} shows that the objective function values eventually drop below the spurious critical value and stabilize around a new value. 

\begin{figure}[ht]
    \centering
     \begin{subfigure}{.49\textwidth}
  \centering
  \includegraphics[width=.95\textwidth]{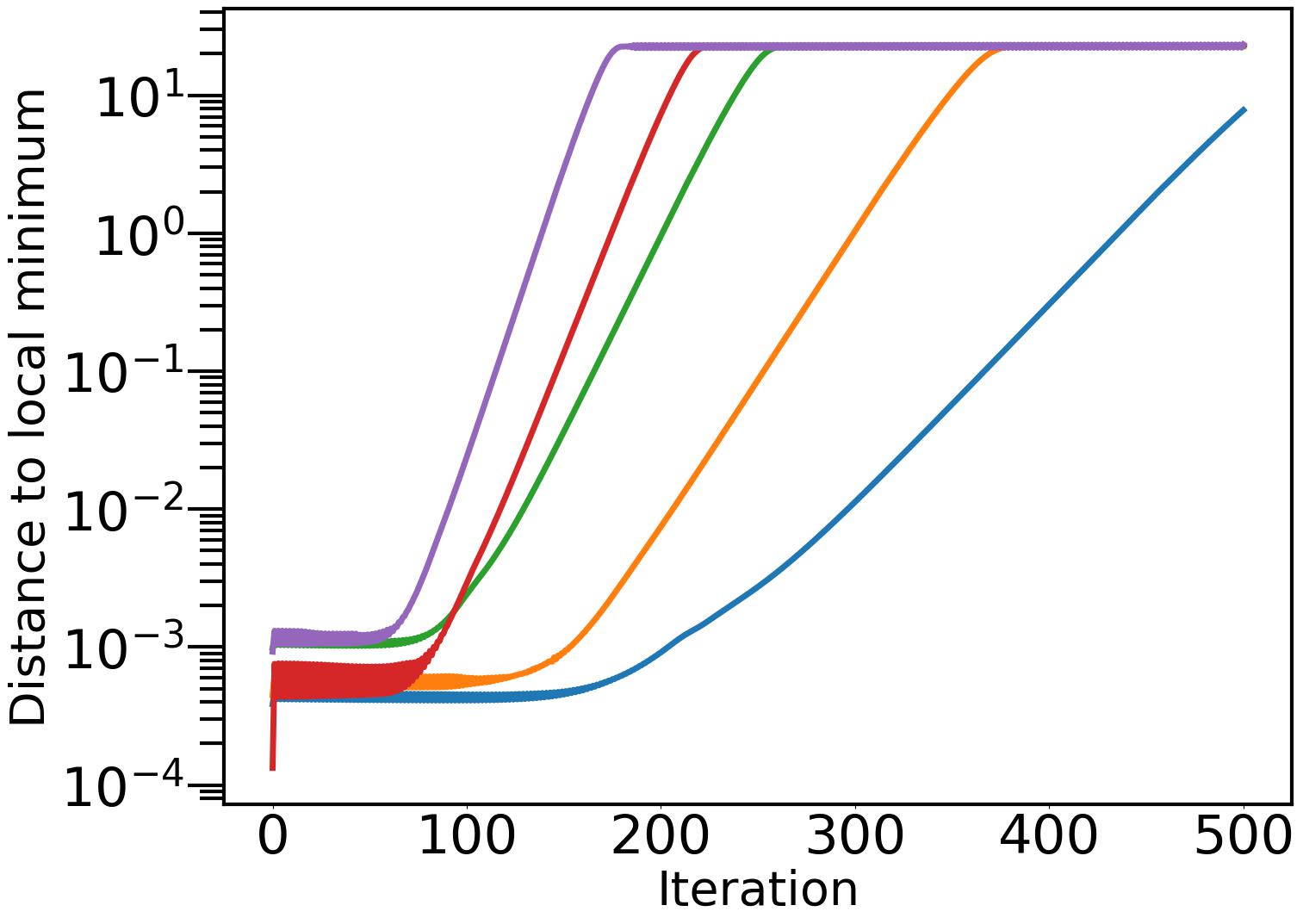}
  \caption{}
  \label{fig:rpca_relative}
\vspace*{1mm}
  \label{fig:video_distance}
 \end{subfigure}
 \begin{subfigure}{.49\textwidth}
  \centering
  \includegraphics[width=.95\textwidth]{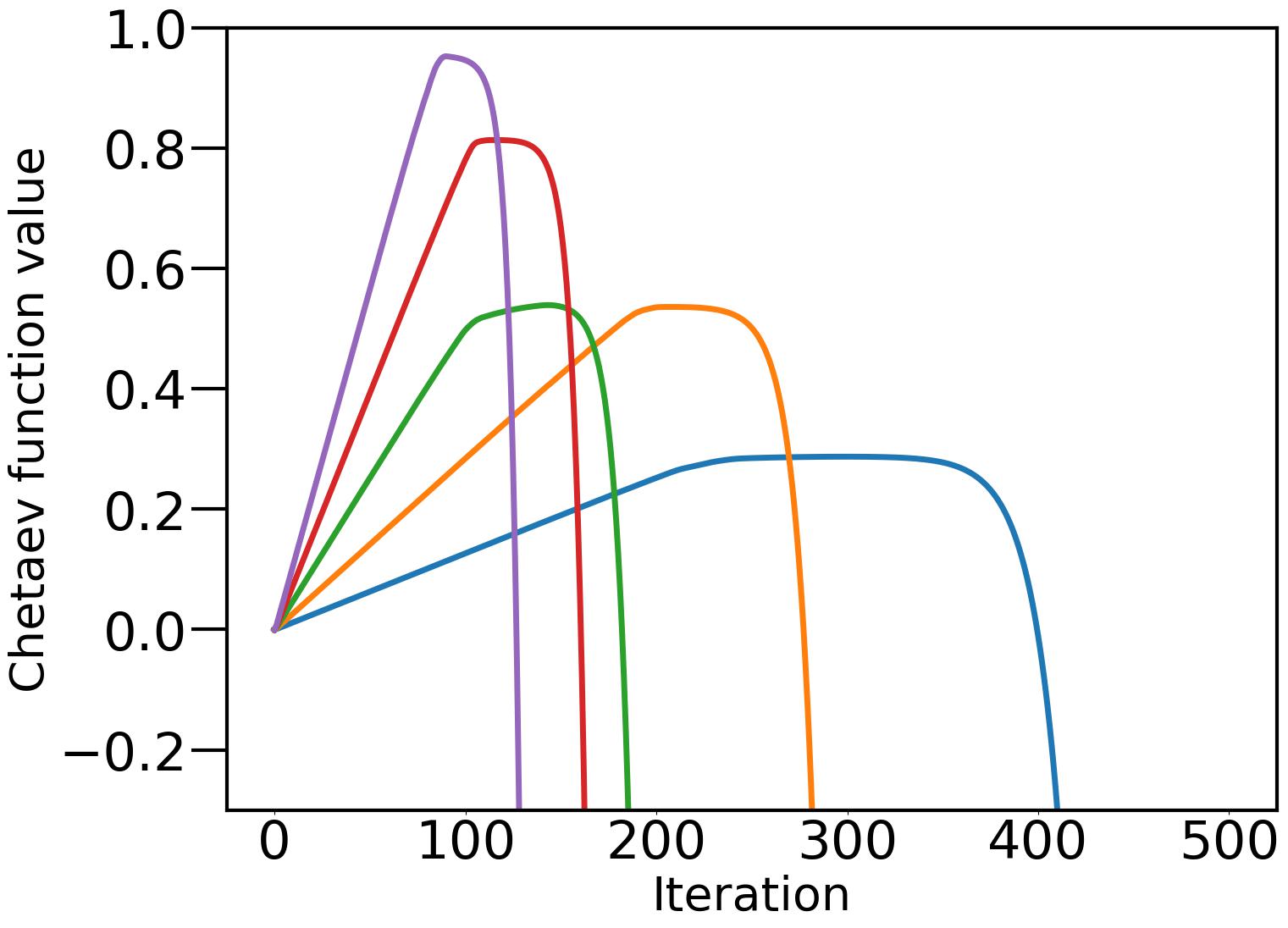}
  \caption{}
  \label{fig:rpca_lyapunov}
 \end{subfigure}
 \begin{subfigure}{.49\textwidth}
  \centering
  \includegraphics[width=.95\textwidth]{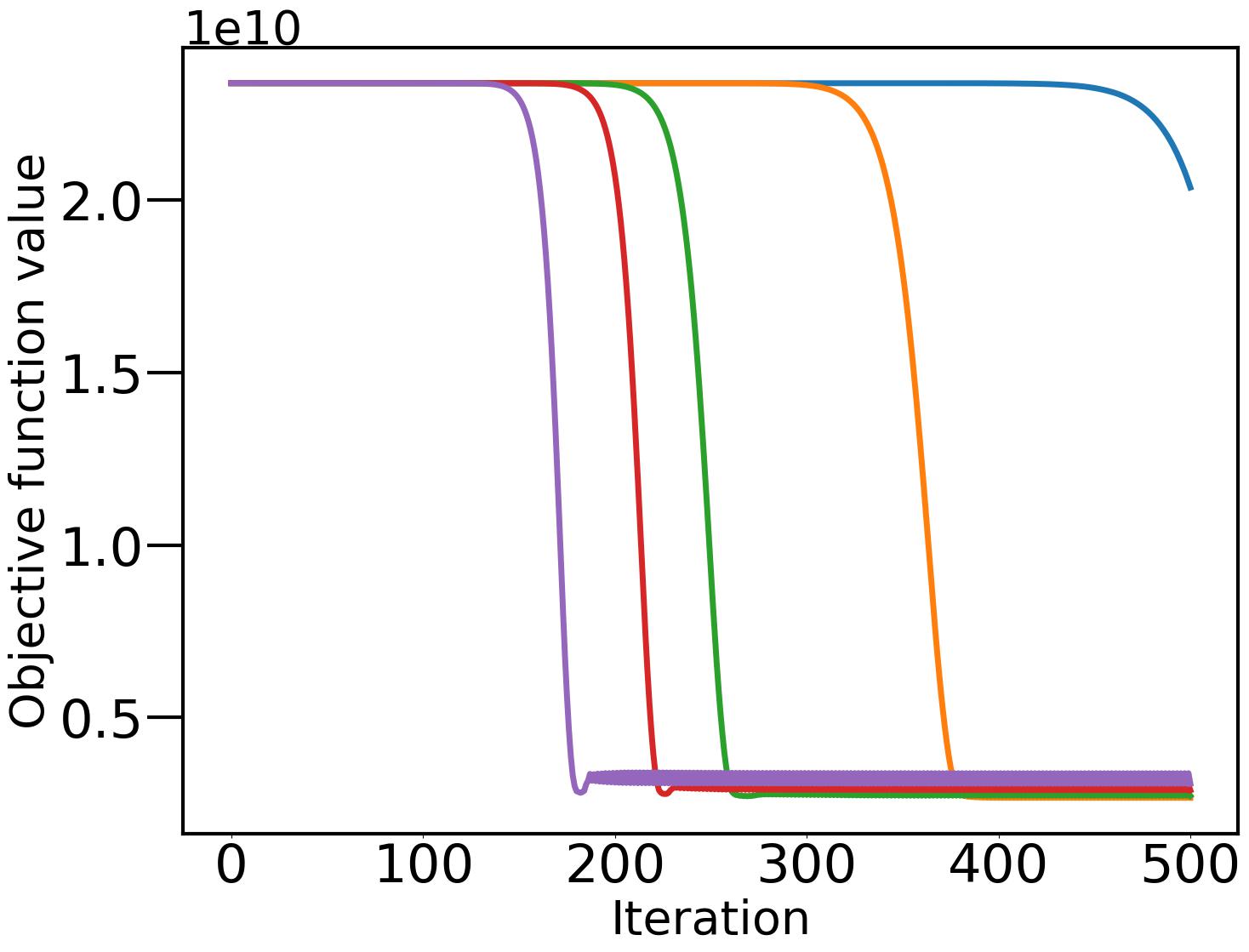}
  \caption{}
  \label{fig:rpca_fv}
 \end{subfigure}
    \caption{Subgradient method randomly initialized near a spurious local minimum of robust principal component analysis (5 trials with different step sizes).}
     \label{fig:rpca}
\end{figure}

The data used in Figure \ref{fig:rpca} is used in \cite[Figure 3]{netrapalli2014} to illustrate \textit{Non-convex Alternating Projections based Robust PCA} \cite[Algorithm 1]{netrapalli2014} and comes from the same dataset as the one used to illustrate \textit{Principal Component Pursuit} \cite[Equation (1.1)]{candes2011}. The application in those works consists of detecting moving objects in a surveillance video. Spurious local minima exist because the data matrix has zero rows, which corresponds to pixels that are composed of at most two of the three primary colors (red, green, and blue) throughout the video. It is crucial that the iterates of the subgradient method do not remain near a spurious local minimum like the one in Figure \ref{fig:rpca}. Otherwise, no moving object would be detected. In contrast, at the lower value obtained in Figure \ref{fig:rpca_fv}, all moving objects are detected. This can be seen in Figure \ref{fig:video_frames} and at the link \href{https://www.youtube.com/playlist?list=PLIR8kg8LvAmuPPZPD4CxjoqcYskoUzjzN}{[video]}.
 \begin{figure}[ht!]
 \centering
 \begin{subfigure}{.327\textwidth}
  \centering
  \includegraphics[width=1\textwidth]{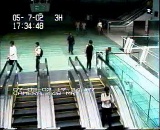}
  \caption{Original frame.}
  \label{fig:original_140}
 \end{subfigure}
 \begin{subfigure}{.327\textwidth}
  \centering
  \includegraphics[width=1\textwidth]{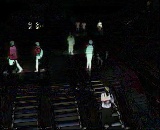}
  \caption{Moving objects.}
  \label{fig:s_140}
 \end{subfigure}
 \begin{subfigure}{.327\textwidth}
  \centering
  \includegraphics[width=1\textwidth]{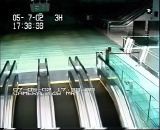}
  \caption{Background.}
  \label{fig:l_140}
 \end{subfigure}
 \vspace*{-1mm}
 \caption{Output after 500 iterations of the subgradient method with step size $\alpha = 0.000005$ when initialized within $10^{-3}$ of a spurious local minimum in relative distance.}
 \label{fig:video_frames}
 \vspace*{-3mm}
 \end{figure}
\begin{proposition}
\label{prop:rpca_rankr_unstable}
	The function $f$ defined from $\mathbb{R}^{m\times r} \times \mathbb{R}^{n\times r}$ to $\mathbb{R}$ by $f(X,Y):= \|XY^T - M\|_1$ admits strongly unstable spurious local minima if $M \in \mathbb{R}^{m\times n} \setminus \{0\}$ and $M$ contains at least $r$ zero rows or $r$ zero columns.
\end{proposition}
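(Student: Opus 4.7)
Without loss of generality assume $M$ has at least $r$ zero rows, located at indices $i_1,\ldots,i_r$ (the zero-columns case follows by swapping the roles of $X$ and $Y$). The plan is to exhibit a specific spurious local minimum and apply \cref{thm:suff_unstable} in case 1 (that is, with $\theta_1=0$) using the Chetaev function $C(X,Y):=\|Y\|_F^2-\|X\|_F^2$, following the template of \cref{prop:neural_network_unstable}.

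Fix $t>0$ and set $X^*\in\mathbb{R}^{m\times r}$ with $X^*_{i_j,j}:=t$ for $j=1,\ldots,r$ and $X^*_{i,k}:=0$ otherwise, and $Y^*:=0$. I first verify that $(X^*,Y^*)$ is a spurious local minimum. Since $X^*(Y^*)^T=0$, $f(X^*,Y^*)=\|M\|_1$; for any $(i_0,j_0)$ with $M_{i_0j_0}\neq 0$ and $\epsilon\in(0,|M_{i_0j_0}|]$, the rank-one probe $(X,Y)=(\epsilon\,\mathrm{sign}(M_{i_0j_0})\,e_{i_0}e_1^T,\,e_{j_0}e_1^T)$ yields $f(X,Y)=\|M\|_1-\epsilon$, so $(X^*,Y^*)$ is spurious. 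For local optimality, split $\|XY^T-M\|_1$ according to whether the row index lies in $I_0:=\{i_1,\ldots,i_r\}$. In those rows $M$ vanishes and the dominant summand $t(\Delta Y)^T$ of $X^*(\Delta Y)^T$ contributes at least $|t|\|\Delta Y\|_1-\|\Delta X(\Delta Y)^T\|_1$ by the triangle inequality; in the remaining rows, $X^*(\Delta Y)^T=0$ and the reverse triangle inequality yields at least $\|M\|_1-\|\Delta X(\Delta Y)^T\|_1$. Summing and using $\|\Delta X(\Delta Y)^T\|_1\leqslant \sqrt{mn}\,\|\Delta X\|_F\|\Delta Y\|_F\leqslant \sqrt{mn}\,\|\Delta X\|_F\|\Delta Y\|_1$ gives $f(X,Y)\geqslant \|M\|_1+(|t|-\sqrt{mn}\,\|\Delta X\|_F)\|\Delta Y\|_1\geqslant \|M\|_1$ for $\|\Delta X\|_F$ small.

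For the Chetaev increment, I expand the subgradient update $(X_{k+1},Y_{k+1})=(X_k-\alpha D_kY_k,\,Y_k-\alpha D_k^TX_k)$ with $D_k\in\partial\|\cdot\|_1(X_kY_k^T-M)$ and invoke the identity $\langle X,DY\rangle=\langle Y,D^TX\rangle$ to obtain
\[
C(X_{k+1},Y_{k+1})-C(X_k,Y_k)=\alpha^2\bigl(\|D_k^TX_k\|_F^2-\|D_kY_k\|_F^2\bigr),
\]
mirroring the conservation of $X^TX-Y^TY$ along continuous subgradient trajectories of $f$. For iterates $(X_k,Y_k)$ near $(X^*,Y^*)$ with $Y_k\neq 0$, pick $(j_0,k_0)$ attaining $\|Y_k\|_\infty$. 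Since $M_{i_{k_0},j_0}=0$, the subgradient entry $(D_k)_{i_{k_0},j_0}$ equals $\mathrm{sign}((X_kY_k^T)_{i_{k_0},j_0})$, whose dominant summand $t(Y_k)_{j_0,k_0}$ is nonzero, forcing $|(D_k)_{i_{k_0},j_0}|=1$. A dominance estimate on $\sum_i(D_k)_{i,j_0}(X_k)_{i,k_0}$ then yields $|(D_k^TX_k)_{j_0,k_0}|\geqslant |t|/2$, while $\|D_kY_k\|_F\leqslant \sqrt{mn}\,\|Y_k\|_F$ can be made smaller than $|t|/4$ by shrinking the neighborhood. Hence $C(X_{k+1},Y_{k+1})-C(X_k,Y_k)\geqslant 3\alpha^2t^2/16$, establishing the Chetaev hypothesis with $\theta_1=0$.

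The final step, which I expect to be the main obstacle, is verifying that the critical set $S$ is a $C^2$ manifold at $(X^*,Y^*)$ of dimension less than the ambient $(m+n)r$. Criticality of $(X,0)$ for every $X$ near $X^*$ follows from a diagonally dominant linear system in the free subdifferential entries (those lying in the zero rows of $M$), solved to produce a valid $D$ with $D^TX=0$. To rule out other nearby critical points, I apply the same dominance argument to $(DY)_{i_{k_0},k_0}=\sum_j(D)_{i_{k_0},j}(Y)_{j,k_0}$: indices $j$ for which $|(Y)_{j,k_0}|$ exceeds a threshold proportional to $\|X-X^*\|_\infty\|Y\|_\infty/|t|$ force $(D)_{i_{k_0},j}=\mathrm{sign}(t)\,\mathrm{sign}((Y)_{j,k_0})$, so these terms sum to $\mathrm{sign}(t)\sum_j|(Y)_{j,k_0}|$ along that subset, which dominates the bounded contribution of the remaining $j$ once $\|X-X^*\|_\infty$ is small enough. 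Consequently $(DY)_{i_{k_0},k_0}\neq 0$ whenever $Y\neq 0$ near $(X^*,Y^*)$, so $S$ coincides locally with $\mathbb{R}^{m\times r}\times\{0\}$, an affine subspace of dimension $mr<(m+n)r$, and \cref{thm:suff_unstable} yields strong instability. The delicate subdifferential bookkeeping around the zero rows of $M$ drives both the Chetaev lower bound and the local manifold description of $S$.
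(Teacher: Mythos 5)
Your proposal is correct and follows the same global strategy as the paper's proof: exhibit a spurious local minimum of the form $(X^*,0)$ with $X^*$ supported in the zero rows of $M$, apply \cref{thm:suff_unstable} in Case~1 with $\theta_1=0$, and use the Chetaev function $C(X,Y)=\|Y\|_F^2-\|X\|_F^2$ (the paper's choice differs only by an additive constant), whose increment along the iterates equals $\alpha^2\bigl(\|D_k^TX_k\|_F^2-\|D_kY_k\|_F^2\bigr)$ in both arguments. The differences are in implementation. First, you fix $\hat X^*=tI$ (after reindexing) whereas the paper allows an arbitrary invertible $\hat X^*$; the special choice is fine and simplifies the estimates. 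Second, for the positivity of $c_1$ the paper argues by compactness and contradiction, extracting a convergent subsequence $(\hat\Lambda_i)$ and obtaining $\hat\Lambda^\circ=0$ against $\|\hat\Lambda_i\|_\infty\geqslant 1$; you instead derive an explicit lower bound $\|D_k^TX_k\|_F^2-\|D_kY_k\|_F^2\geqslant 3t^2/16$ via a dominance estimate. Third, for identifying $S\cap U=\{(X,Y)\in U:Y=0\}$ the paper combines the definable Morse--Sard theorem with its lower bound $f(X,Y)\geqslant\|M\|_1+(c/2)\|Y\|_1$; you instead analyze the subdifferential system $DY=0$, $D^TX=0$ directly, using diagonal dominance to produce a valid $D$ when $Y=0$ and to show $(DY)_{i_{k_0},k_0}\neq 0$ for any admissible $D$ when $Y\neq 0$. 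Both routes are sound and end at the same facts; the paper's is shorter and covers a continuous family of $X^*$, while your dominance estimates are constructive, avoid Morse--Sard in the critical-set step, and yield explicit constants, at the cost of more detailed bookkeeping that you rightly flag as not yet written out in full (the threshold in Step~4, the quantitative constants in the diagonal dominance argument, and the exactness of the chain rule $\partial f(X,Y)=\{(DY,D^TX):D\in\mathrm{sign}(XY^T-M)\}$, which both proofs use, should be made precise). There is no genuine gap; filling in those calculations would produce a complete alternative proof.
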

\begin{proof}
Without loss of generality, we assume that the first $r$ rows of $M$ are equal to zero. Let $\tilde{M}$ be the matrix containing the $m-r$ remaining rows, one of which is non-zero. We seek to show that $(X^*,Y^*) \in \mathbb{R}^{m\times r} \times \mathbb{R}^{n \times r}$ is a strongly unstable spurious local minimum of $f(X,Y):=\|XY^T-M\|_1$, where the first $r$ rows of $X^*$ form an invertible matrix $\hat{X}^* \in \mathbb{R}^{r \times r}$, the remaining rows are zero, and $Y^* = 0$. Given $(H,K) \in \mathbb{R}^{m\times r} \times \mathbb{R}^{n \times r}$, let $\hat{H}$ be the first $r$ rows of $H$ and let $\tilde{H}$ be the remaining $m-r$ rows. For all $(H,K)$ sufficiently small, we have
\begin{subequations}
\label{eq:hk}
    \begin{align}
        f(X^*+H,Y^*+K) & = \| (X^*+H)K^T - M \|_1  \\
         & =   \|(\hat{X}^* + \hat{H})K^T\|_1 + \|\tilde{H}K^T - \tilde{M}\|_1 \label{split} \\
         & \geqslant  \|\hat{X}^*K^T\|_1 - \|\hat{H}K^T\|_1 + \|\tilde{M}\|_1 - \|\tilde{H}K^T\|_1 \label{triangle}\\
         & = \|\hat{X}^*K^T\|_1 ~-~ \|HK^T\|_1 ~+~ \|M\|_1 \label{group}\\
         & \geqslant c\|K\|_1 ~-~ \|HK^T\|_1 ~+~ \|M\|_1 \label{equivalence_norms} \\
         & \geqslant c/2\|K\|_1 ~+~ \|M\|_1 \label{dual_norm} \\
         & \geqslant \|M\|_1 = f(X^*,Y^*) > f(\bar{X},\bar{Y}). \label{H_small}
    \end{align}
\end{subequations}
Above, the first term in \eqref{split} is the $\ell_1$-norm of the first $r$ rows of $(X^*+H)K^T - M$, while the second term in \eqref{split} is the $\ell_1$-norm of the remaining rows. \eqref{triangle} follows from the triangular inequality. \eqref{group} holds because the first $r$ rows of $HK^T$ are $\hat{H}K^T$, while the remaining rows are $\tilde{H}K^T$. The existence of a positive constant $c$ in \eqref{equivalence_norms} is due to the equivalence of norms ($K\mapsto \|\hat{X}^*K^T\|_1$ is a norm because $\hat{X}^*$ is invertible). \eqref{dual_norm} holds because we may take $\|H\|_\infty \leqslant c/(2m)$ where $\|\cdot\|_\infty$ is the dual norm of $\|\cdot\|_1$. Then $\|HK^T\|_1 = \sum_{i=1}^m \sum_{j=1}^n | \langle h_i , k_j \rangle | \leqslant \sum_{i=1}^m \sum_{j=1}^n \|h_i\|_\infty \|k_j\|_1 \leqslant \sum_{i=1}^m \sum_{j=1}^n \|H\|_\infty \|k_j\|_1 \leqslant c/2 \sum_{j=1}^n \|k_j\|_1 = c/2 \|K\|_1$ where $h_i^T$ and $k_j^T$ respectively denote the rows of $H$ and $K$. Finally, we may choose $(\bar{X},\bar{Y})$ in \eqref{H_small} to be factors of a rank-one matrix $\bar{M} \in \mathbb{R}^{m\times n}$ which has all zero entries, apart from one where $\bar{M}_{ij} = M_{ij} \neq 0$. Then $f(\bar{X},\bar{Y}) = \|\bar{X}\bar{Y}^T-M\|_1 = \|\bar{M}-M\|_1 = \|M\|_1 - |M_{ij}| < \|M\|_1$.

We next show that $(X^*,Y^*)$ is strongly unstable using Theorem \ref{thm:suff_unstable}. The function $f$ is locally Lipschitz and tame. Let $S$ denote the set of critical points of $f$. By the definable Morse-Sard theorem \cite[Corollary 9]{bolte2007clarke}, there exists a bounded neighborhood $U$ of the local minimum $(X^*,Y^*)$ such that $S \cap U = \{(X,Y) \in U: f(X,Y) = f(X^*,Y^*)\} = \{(X,Y) \in U:Y = Y^*\}$, where the second setwise equality is due to \eqref{dual_norm}. 
As a result, $S$ is a $C^2$ manifold at $(X^*,Y^*)$. Let $\theta_1 := 0$ and $C:\mathbb{R}^{m \times r}\times \mathbb{R}^{n \times r}\rightarrow \mathbb{R}$ be the continuous function defined by $C(X,Y):= \|X^*\|_F^2-\|Y^*\|_F^2+\|Y\|_F^2-\|X\|_F^2$. Let $\alpha>0$ and consider a sequence $(X_k,Y_k)_{k \in \mathbb{N}}$ generated by the subgradient method with constant step size $\alpha$ such that $(X_k,Y_k) \in U\setminus S$ for all $k\in \mathbb{N}$. Let $\mathrm{sign}(\cdot)$ be the function defined by $\mathrm{sign}(t) = 1$ if $t>0$, $\mathrm{sign}(t) = -1$ if $t<0$, and $\mathrm{sign}(t) = [-1,1]$ if $t=0$. When the input is a matrix, it is applied entrywise. Letting 
\begin{equation*}
    c_1 :=\alpha^2 \inf \{ \|\Lambda^T X\|_F^2 - \|\Lambda Y\|_F^2 : (X,Y) \in U\setminus S, ~ \Lambda \in \mathrm{sign}(X Y^T - M) \},
\end{equation*}
we have $C(X_{k+1},Y_{k+1}) - C(X_k,Y_k) = \hdots$
\begin{subequations}
\begin{align}
    & = \|Y_{k+1}\|_F^2 - \|X_{k+1}\|_F^2 - \|Y_{k}\|_F^2 + \|X_{k}\|_F^2 \label{eq:chpca_b}\\
    & = \mathrm{trace}(Y_{k+1}^T Y_{k+1} - X_{k+1}^T X_{k+1} - Y_k^T Y_k + X_k^T X_k) \label{eq:chpca_c}\\
    & = \alpha^2 \mathrm{trace}(X_k^T \Lambda_k \Lambda_k^T X_k - Y_k^T \Lambda_k^T\Lambda_k Y_k) \\
    & = \alpha^2 (\|\Lambda_k^T X\|_F^2 - \|\Lambda_k Y_k\|_F^2)  \geqslant c_1 d((X_k,Y_k),S)^{\theta_1}, \label{eq:chpca_d}
\end{align}
\end{subequations}
where $\Lambda_k \in \mathrm{sign}(X_k Y_k^T - M)$. It remains to show that $c_1>0$.
Given $(\Lambda,X,M) \in \mathbb{R}^{m\times n} \times \mathbb{R}^{m\times r}\times \mathbb{R}^{m\times n}$, let $(\hat{\Lambda},\hat{X},\hat{M})$ be the first $r$ rows of $(\Lambda,X,M)$ and let $(\tilde{\Lambda},\tilde{X},\tilde{M})$ be the remaining $m-r$ rows. It suffices to show that
\begin{equation*}
\label{eq:lambdah}
    \inf \{ \|\hat{\Lambda}^T \hat{X}\|_F : (X,Y) \in U \setminus S,~\hat{\Lambda} \in \mathrm{sign}(\hat{X}Y^T - \hat{M}) \} > 0
\end{equation*}
after possibly reducing the neighborhood $U$ of $(X^*,Y^*)$. Indeed, for all $(X,Y) \in U \setminus S$ and $\Lambda \in \mathrm{sign}(X Y^T - M) $, we then have $\|\Lambda^T X\|_F^2 - \|\Lambda Y\|_F^2 = \|\hat{\Lambda}^T \hat{X} + \tilde{\Lambda}^T \tilde{X}\|_F^2 - \|\Lambda Y\|_F^2 \geqslant (\|\hat{\Lambda}^T \hat{X}\|_F - \|\tilde{\Lambda}^T \tilde{X}\|_F)^2 - \|\Lambda Y\|_F^2 \geqslant \|\hat{\Lambda}^T \hat{X}\|_F^2/2$ since $\tilde{X}^* = Y^* = 0$. We next reason by contradiction and assume that the infimum in \eqref{eq:lambdah} is equal to zero.
Let $(X_i,Y_i,\Lambda_i)_{i \in \mathbb{N}}$ be a minimizing sequence. Since it is contained in the bounded set $U \times [-1,1]^{m \times n}$, there exists a subsequence (again denoted $(X_i,Y_i,\Lambda_i)_{i \in \mathbb{N}}$) that converges to some $(X^\circ,Y^\circ,\Lambda^\circ)$. Naturally we have $(\hat{\Lambda}^\circ)^T \hat{X}^\circ = 0$. On the one hand, since $\hat{X}^* \in \mathbb{R}^{r\times r}$ is invertible, so is any matrix in its neighborhood $\bar{U}$, in particular $\hat{X}^\circ,\hat{X}_0,\hat{X}_1,\hdots$ after possibly reducing $U$. Hence $\hat{\Lambda}^\circ = 0$. On the other hand, since $(X_i,Y_i) \in U \setminus S$, $S \cap U = \{ (X,Y)\in U : Y=Y^*=0\}$, and $\hat{M} = 0$, we have $Y_i \neq 0$ and $\hat{X}_iY_i^T-\hat{M} \neq 0$ for all $i \in \mathbb{N}$. Hence the matrix $\hat{\Lambda}_i \in \mathrm{sign}(\hat{X}Y^T - \hat{M})$ has at least one entry equal to either $1$ or $-1$. Thus $\|\hat{\Lambda}_i\|_\infty \geqslant 1$ for all $i \in \mathbb{N}$. Passing to the limit, we obtain the contradiction $0 =\|\hat{\Lambda}^\circ\|_\infty \geqslant 1$.
\end{proof}



\clearpage
\phantomsection 
\titleformat{\chapter}[display]
{\normalfont\bfseries\filcenter}{}{0pt}{\large\bfseries\filcenter{#1}}  
\titlespacing*{\chapter}
  {0pt}{0pt}{30pt}

\begin{singlespace}  
	\printbibliography[title={References}]
\end{singlespace}


%
%


\end{document}